\documentclass[reqno]{amsart}


\usepackage [mathscr]{eucal}
\usepackage{amsmath,amsthm,amssymb,amscd}
\usepackage{amsrefs}
\usepackage{epsf}
\usepackage{amsfonts}
\usepackage{dsfont}
\usepackage{latexsym}
\usepackage{mathrsfs}
\usepackage{layout}
\usepackage{bm}
\usepackage{verbatim}

\newtheorem{theorem}{Theorem}[section]
\newtheorem{lemma}[theorem]{Lemma}
\newtheorem{proposition}[theorem]{Proposition}
\newtheorem{corollary}[theorem]{Corollary}

\newtheorem{definition}[theorem]{Definition}

\numberwithin{equation}{section}

\def\Xint#1{\mathchoice
{\XXint\displaystyle\textstyle{#1}}%
{\XXint\textstyle\scriptstyle{#1}}%
{\XXint\scriptstyle\scriptscriptstyle{#1}}%
{\XXint\scriptscriptstyle\scriptscriptstyle{#1}}%
\!\int}
\def\XXint#1#2#3{{\setbox0=\hbox{$#1{#2#3}{\int}$ }
\vcenter{\hbox{$#2#3$ }}\kern-.6\wd0}}

\def\dint{\Xint-}

\DeclareMathOperator *{\essosc}{ess\ osc}
\DeclareMathOperator *{\osc}{osc}
\DeclareMathOperator *{\esssup}{ess\ sup}
\DeclareMathOperator *{\essinf}{ess\ inf}
\DeclareMathOperator *{\di}{div} 
\DeclareMathOperator *{\meas}{meas}
\DeclareMathOperator *{\dist}{dist}
\DeclareMathOperator *{\data}{data}
\DeclareMathOperator *{\diam}{diam}
\DeclareMathOperator *{\loc}{loc}
\DeclareMathOperator *{\Lip}{Lip}
\DeclareMathOperator *{\Tr}{Tr}
\DeclareMathOperator *{\BMO}{BMO}
\DeclareMathOperator *{\Proj}{Proj}
\DeclareMathOperator *{\BBR}{\mathbb{R}}
\DeclareMathOperator *{\BBC}{\mathbb{C}}

\newcommand{\cJ}{{\mathcal J}}

\begin{document}
\title[Elliptic PDEs with complex coefficients]{
Regularity theory for solutions to second order elliptic operators with complex coefficients and the $L^p$ Dirichlet problem}

\author{Martin Dindo\v{s}}
\address{School of Mathematics, \\
         The University of Edinburgh and Maxwell Institute of Mathematical Sciences, UK}
\email{M.Dindos@ed.ac.uk}

\author{Jill Pipher}
\address{Department of Mathematics, \\ 
	Brown University, USA}
\email{jill\_pipher@brown.edu}

\begin{abstract}
We establish a new theory of regularity for elliptic complex valued second order equations of the form 
$\mathcal L=$div$A(\nabla\cdot)$, when the coefficients of the matrix $A$ satisfy a natural algebraic condition, a strengthened version of a condition known in the literature as $L^p$-dissipativity.  Precisely, the regularity result is a reverse H\"older condition for $L^p$ averages of solutions on interior balls, and serves as a replacement for the De Giorgi - Nash - Moser regularity of solutions to real-valued divergence form elliptic operators.
In a series of papers, Cialdea and Maz'ya studied necessary and sufficient conditions for $L^p$-dissipativity of second order complex coefficient operators and systems. Recently,  Carbonaro and Dragi\v{c}evi\'c  introduced a condition they termed $p$-ellipticity, and showed that it had implications for boundedness of certain bilinear operators that arise from complex valued second order differential operators. Their $p$-ellipticity condition is exactly our strengthened version of $L^p$-dissipativity. The regularity results of the present paper are applied to solve $L^p$ Dirichlet problems for  $\mathcal L=$div$A(\nabla\cdot)+B\cdot\nabla$ when $A$ and $B$ satisfy a Carleson measure condition, which previously was known only in the real valued case. We show solvability of the $L^2$ Dirichlet problem, as well as solvability of the $L^p$ Dirichlet boundary value problem for $p$ in the range where $A$ is $p$-elliptic. 
\end{abstract}

\maketitle

\section{Introduction}\label{S:Intro}

In this paper, we establish a new theory of interior regularity of solutions to second order divergence form complex coefficient operators 
$\mathcal{L}=\mbox{div} A(x)\nabla +B(x)\cdot\nabla$ under certain natural algebraic conditions on the matrix $A$ and a natural minimal scaling condition on $B$,
without any additional smoothness of the coefficients.
If the coefficients of $A$ and $B$ are real, the algebraic conditions on $A$ are precisely uniform ellipticity.

The improvements in regularity of solutions, as expressed as  \eqref{RHthm1} and \eqref{gradp} of Theorem \ref{Regularity} below, can be used as a substitute for the De Giorgi-Nash-Moser regularity theory for real divergence form elliptic equations. In the latter case, we know that when $A$, $B$ are real valued and $A$ is elliptic, the regularity theory for solutions gives that $u\in C^{\alpha}(B)$;  
this need not hold for solutions to complex coefficient operators. Indeed, solutions need not be locally bounded. 
However, we show that an iterative procedure, reminiscent of Moser's iteration scheme, can be amplified in a range determined by these algebraic conditions, yielding greater regularity of weak solutions. Moreover, we apply this regularity to show solvability of a Dirichlet boundary value problem for a class of variable coefficient complex coefficient second order operators, with certain minimal and natural smoothness assumptions on the coefficients. 
The solvability of these boundary value problems in the complex case require new ideas which make the methods of \cite{KP01} and \cite{DPP} more broadly applicable. Further,
the regularity theory established in this paper should make possible the study of boundary value problems for a variety of complex coefficient operators. 

We recently found that Carbonaro and Dragi\v{c}evi\'c \cite{CD} have also formulated this same algebraic condition (which they termed $p$-ellipticity) and showed that it had implications for boundedness of certain bilinear operators that arise
from complex valued second order differential operators. Their dimension-free bounds are related to the question of contractivity of the associated semigroup of these operators in $L^p$. The issue of $L^p$ contractivity of semigroups had been considered in a series of papers by Cialdea and Maz'ya \cite{CM2, CM1, CM3}, and we next introduce and discuss this in more detail.

In the case of complex coefficients, the usual ellipticity assumption is that 
there exist constants $0<\lambda\leq\Lambda<\infty$ such that
\begin{equation}\label{EllipAA}
\lambda|\xi|^{2}\leq \mathscr{R}e\,\sum_{i,j=0}^{n-1} 
 A_{ij}(x)\xi_{i}\overline{\xi_{j}}=\mathscr{R}e\,\langle A(x)\xi,\xi\rangle\quad\mbox{and}\quad |\langle A\xi,\eta\rangle|\le \Lambda|\xi||\eta|
\end{equation}
for all $\xi,\,\eta\in \BBC^n$ and a.e. $x\in\Omega$. 
In this paper, we consider a stronger form of ellipticity, a strengthening of the concept of 
$L^p$ dissipativity as defined in \cite{CM2, CM1, CM3}, which in turn was motivated by understanding when semigroups generated by second order elliptic operators are 
contractive in $L^p$. In particular, it had long been known that scalar second order elliptic operators with real coefficients generate contractive semigroups in $L^p$ for all $1\le p\le\infty$.

In \cite{CM2}, the following condition was shown to be sufficient for $L^p$ dissipativity:
\begin{equation}
\frac4{pp'}\langle\mathscr{R}e\,A(x)\xi,\xi\rangle+\langle(\mathscr{R}e\,A(x)\eta,\eta\rangle+2\langle p^{-1}
\mathscr{I}m\,A(x)-{p'}^{-1}\mathscr{I}m\,A^t(x))\xi,\eta\rangle\ge 0,
\end{equation}
for all $\xi,\,\eta\in\mathbb R^n$.

We will consider a stronger condition, and use a
change of variables $\xi=\frac{\sqrt{pp'}}{2}\lambda$ to write it as follows: For some $\varepsilon>0$ and all 
$\lambda,\eta\in\mathbb R^n$
\begin{equation}\label{CMell}
\langle \mathscr{R}e\,A\,\lambda,\lambda\rangle+\langle \mathscr{R}e\,A\,\eta,\eta\rangle+
\left\langle \left(\textstyle\sqrt{\frac{p'}{p}}
\mathscr{I}m\,A-\sqrt{\frac{p}{p'}}\mathscr{I}m\,A^t\right)\lambda,\eta\right\rangle
\ge \varepsilon(|\lambda|^2+|\eta|^2).
\end{equation}


This same condition also appears in the paper \cite{CD}, where the authors introduce it in the following form. For $p>1$ define the ${\mathbb R}$-linear map $\cJ_p:{\mathbb C}^n\to {\mathbb C}^n$ by
$$\cJ_p(\alpha+i\beta)=\frac{\alpha}{p}+i\frac{\beta}{p'}$$
where $p'=p/(p-1)$ and $\alpha,\beta\in{\mathbb R}^n$. They define the matrix $A$ to be $p$-elliptic if
for a.e. $x\in\Omega$
\begin{equation}\label{pEll}
\mathscr{R}e\,\langle A(x)\xi,\cJ_p\xi\rangle \ge \lambda_p|\xi|^2,\qquad\forall \xi\in{\mathbb C}^n
\end{equation}
for some $\lambda_p>0$.  

Henceforth, $A$ {\it will be called $p$-elliptic} if it satisfies \eqref{pEll} and the upper bound
\begin{equation}
|\langle A(x)\xi,\eta \rangle| \le \Lambda |\xi| |\eta|, \qquad\forall \xi, \,\eta\in{\mathbb C}^n.
\end{equation}
A short calculation shows that \eqref{pEll} and \eqref{CMell} are equivalent.

We shall adopt the notation of \cite{CD},  and recall their observation that
this strengthened ellipticity condition is equivalent to $\Delta_p(A)>0$ where
\begin{equation}\label{Deltap}
\Delta_p(A)=\essinf_{x\in\Omega}\min_{|\xi|=1}\mathscr{R}e\,\langle A(x)\xi,\cJ_p\xi\rangle. 
\end{equation}
Observe that when $p=2$ this is just the usual ellipticity condition \eqref{EllipAA}. The $p$-ellipticity condition $\Delta_p(A)>0$ can be restated in a different form
\begin{equation}\label{Ellmu}
|1-2/p|<\mu(A),
\end{equation}
where
\begin{equation}\label{Defmu}
\mu(A)=\essinf_{(x,\xi)\in\Omega\times{\mathbb C}^n\setminus\{0\}}\mathscr{R}e\,\frac{\langle A(x)\xi,\xi\rangle}{|\langle A(x)\xi,\overline{\xi}\rangle|}.
\end{equation}
(c.f. Proposition 5.14 of \cite{CD}). The advantage of writing the inequality in this form is that it separates $A$ from $p$. It also immediately implies that if a matrix $A$ is elliptic (i.e., \eqref{EllipAA} holds) then there exists $p_0\in [1,2)$
such that $A$ is $p$-elliptic if and only if $p\in (p_0,p_0')$, where $p_0=2/(1+\mu(A))$. Moreover, $p_0=1$ if and only if the matrix $A$ is real and the quantity $\mu(A)$ is trivially bounded by $\mu(A)\ge\lambda/\Lambda$ giving a trivial upper bound on the value of $p_0$.\vglue2mm

Our first main result concerns solutions to $\mathcal{L}=\mbox{div} A(x)\nabla +B(x)\cdot\nabla$  in a domain $\Omega\subset {\mathbb R}^n$.

\begin{theorem}\label{Regularity}
Suppose that  $u\in W^{1,2}_{loc}(\Omega;{\BBC})$ is the weak solution to the operator ${\mathcal L}u:=\mbox{\rm div} A(x)\nabla u +B(x)\cdot\nabla u = 0$ in $\Omega$.
Let $p_0 = \inf \{p>1: \text{$A$ is $p$-elliptic}\}$,
and suppose that $B$ has measurable coefficients  $B_i \in L^{\infty}_{loc}(\Omega)$ satisfying the condition
\begin{equation}\label{Bcond}
|B_i(x)| \leq K (\delta(x))^{-1}, \quad\forall x \in \Omega
\end{equation}
where the constant $K$ is uniform, and $\delta(x)$ denotes the distance of $x$ to the boundary of $\Omega$. 
Then we have the following improvement in the regularity of $u$. For any $B_{4r}(x)\subset\Omega$ and $\varepsilon>0$ there exists $C_\varepsilon>0$ such that
\begin{equation}\label{RHthm1}
\left(\dint_{B_{r}(x)} |u|^{p} \,dy\right)^{1/{p}}
\le
C_\varepsilon\left(\dint_{B_{2 r}(x)} |u|^{q} \,dy\right)^{1/{q}}+\varepsilon \left(\dint_{B_{2 r}(x)} |u|^{2} \,dy\right)^{1/{2}}
\end{equation}
for all $p,q \in (p_0, \frac{p'_0n}{n-2})$. (Here $p_0'=p_0/(p_0-1)$ and when $n=2$ one can take $p,q\in (p_0,\infty)$.) The constant in the estimate depends on the dimension, the $p$-ellipticity constants, $\Lambda$, $K$ and $\varepsilon>0$ but not on $x\in\Omega$, $r>0$ or $u$. 
Moreover, for all $p\in (p_0,p_0')$ and any $\varepsilon>0$
\begin{equation}\label{gradp}
r^2 \dint_{B_r(x)} |\nabla u(y)|^2 |u(y)|^{p-2} dy \le  C_\varepsilon \dint_{B_{2r}(x)} |u(y)|^p dy +\varepsilon \left(\dint_{B_{2r}(x)} |u(y)|^2 dy\right)^{p/2},
\end{equation}
where the constant again depend only on the dimension,  $p$, $\Lambda$, $K$ and $\varepsilon>0$.
In particular, $|u|^{(p-2)/2} u$ belongs to $W^{1,2}_{loc}(\Omega;\BBC)$.
\end{theorem}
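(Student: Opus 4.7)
The strategy is a Moser-type iteration adapted to complex coefficients, with the algebraic $p$-ellipticity condition \eqref{pEll} playing the role of pointwise coercivity. Fix $p\in(p_0,p_0')$, a cutoff $\eta\in C_c^\infty(B_{2r}(x))$ with $\eta\equiv 1$ on $B_r(x)$ and $|\nabla\eta|\lesssim 1/r$, and a small $\delta>0$; put $v_\delta:=(|u|^2+\delta^2)^{1/2}$ to avoid the singularity of $|u|^{p-2}$ at $u=0$. I would test the weak formulation of $\mathcal{L}u=0$ against the natural complex analogue of Moser's test function,
\[
\phi \;:=\; v_\delta^{p-2}\,\bar{u}\,\eta^2,
\]
after a secondary truncation $u\mapsto u\,\chi_{\{|u|\le M\}}$ so that $\phi\in W^{1,2}_0(B_{2r}(x))\cap L^\infty$. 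The eventual limits $\delta\downarrow 0$ and $M\uparrow\infty$ leave a residual error whose natural $L^2$-scaling accounts for the small $\varepsilon(\dint|u|^2)^{p/2}$ correction in \eqref{gradp}.

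The decisive algebraic input is the pointwise bound
\[
\mathrm{Re}\,\langle A\nabla u,\,\overline{\nabla(v_\delta^{p-2}\bar{u})}\rangle \;\ge\; c\,\Delta_p(A)\,v_\delta^{p-2}\,|\nabla u|^2,
\]
which is exactly the identity that motivates the $p$-ellipticity definition \eqref{pEll}. Writing $\nabla u=\alpha+i\beta$ with $\alpha,\beta\in\mathbb{R}^n$ and expanding $\nabla(v_\delta^{p-2}\bar{u})$ via the chain rule, the left-hand side decomposes into exactly the real quadratic form appearing in \eqref{CMell} (after identifying the $\mathcal{J}_p\nabla u$-like contribution produced by the power $v_\delta^{p-2}$), and then $p$-ellipticity yields the stated lower bound. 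This is the step I expect to be the main obstacle, because the bound must produce $|\nabla u|^2$ rather than merely $|\nabla v_\delta|^2$; otherwise the iteration would only control $|u|$ on the set where it does not vanish, and the $W^{1,2}_{loc}$ statement for $|u|^{(p-2)/2}u$ would fail.

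With this coercivity in hand, \eqref{gradp} follows by handling two error terms on the right-hand side of the weak formulation. The cutoff error $\int v_\delta^{p-2}|u||\nabla u|\,\eta|\nabla\eta|$ is controlled by Young's inequality with parameter $\tfrac12 c\,\Delta_p(A)$, absorbing half of the coercive term on the left and leaving $Cr^{-2}\int v_\delta^p$. The drift term $\int(B\cdot\nabla u)\bar{\phi}$ is treated identically, using \eqref{Bcond} together with the elementary bound $\delta(y)\ge 2r$ on $\mathrm{supp}\,\eta$ (which holds because $B_{4r}(x)\subset\Omega$), so $|B|\le K/(2r)$ there; another Young's inequality produces only contributions of the same two forms. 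Passing to the limit in $\delta$ and $M$ gives \eqref{gradp}, and the $W^{1,2}_{loc}$ assertion for $|u|^{(p-2)/2}u$ is then immediate from the chain rule together with $|\nabla(|u|^{(p-2)/2}u)|^2\simeq|u|^{p-2}|\nabla u|^2$.

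Finally, \eqref{RHthm1} is obtained by applying the Sobolev embedding to $w:=|u|^{(p-2)/2}u$: estimate \eqref{gradp} becomes an energy bound $r^2\dint_{B_r}|\nabla w|^2\le C_\varepsilon\dint_{B_{2r}}|w|^2+\text{error}$, and $W^{1,2}\hookrightarrow L^{2n/(n-2)}$ upgrades the $L^p$ norm of $u$ to the $L^{pn/(n-2)}$ norm on the smaller ball, for every $p\in(p_0,p_0')$. Letting $p$ approach the endpoint $p_0'$ accounts for the upper exponent $p_0'n/(n-2)$ in the statement; the two-sided inequality for arbitrary $p,q\in(p_0,p_0'n/(n-2))$ is then obtained from Hölder when $p\le q$ and from a standard covering/self-improvement argument on intermediate radii when $p>q$, with the $\varepsilon$-term propagating unchanged from \eqref{gradp}.
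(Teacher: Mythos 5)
Your scheme rests entirely on the displayed pointwise coercivity inequality, which you assert (and yourself flag as ``the main obstacle'') but do not prove, and the justification you sketch is not accurate. With the regularization $v_\delta=(|u|^2+\delta^2)^{1/2}$, expanding $\nabla(v_\delta^{p-2}\bar u)$ produces, at points where $u\neq 0$, the quadratic form of \eqref{CMell} not at the exponent $p$ but at the interpolated exponent $q=2+(p-2)\,|u|^2/v_\delta^{2}$, which varies between $2$ and $p$ from point to point; so it is not ``exactly the real quadratic form appearing in \eqref{CMell}'', and to get a lower bound uniform in $\delta$ you must additionally show that $p$-ellipticity together with ordinary ellipticity gives $q$-ellipticity with a uniform constant for all $q$ between $2$ and $p$ (for instance via \eqref{Ellmu}, or by observing the form is affine in $s=|u|^2/v_\delta^2\in[0,1]$ and checking the endpoints). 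Supplying exactly this kind of coercivity is the content of the paper's Theorem \ref{MTdiss}, proved through the Cialdea--Maz'ya substitution $v=|u|^{p/2-1}u$, a limiting argument in the regularization, and, for $1<p<2$, a duality argument using that $A^*$ is $p'$-elliptic; as written, your proposal assumes the key lemma rather than proving it. Two further points need repair: the sharp truncation $u\mapsto u\chi_{\{|u|\le M\}}$ is not a Sobolev truncation (use $u\min(1,M/|u|)$, or follow the paper, which first proves the estimates for smooth coefficients --- where $u$ is smooth and locally bounded --- with constants independent of smoothness, and then removes smoothness by the approximation and weak-compactness argument in Lemmas \ref{LpAvebig} and \ref{LpAve}, a step your proposal never addresses); and your attribution of the $\varepsilon$-corrections to a ``residual error'' in the limits $\delta\downarrow 0$, $M\uparrow\infty$ is unsubstantiated --- if your testing scheme works as described, monotone/dominated convergence yields \eqref{gradp} with $\varepsilon=0$ even for $p<2$, so you should either exhibit the error term concretely or state that you are claiming a stronger estimate.

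For comparison, the paper's $\varepsilon$-terms have a specific and different origin. For $p\ge 2$ the paper argues essentially as you do (cutoff plus absorption), except that the cutoff is placed in the solution, $v=u\varphi$, and the coercivity is used in the integrated form of Theorem \ref{MTdiss}. For $p<2$ that arrangement breaks down (since then $|v|^{p-2}\ge |u|^{p-2}$ and blows up where $\varphi$ is small), so Lemma \ref{LpAve} abandons the cutoff, integrates by parts over balls, converts the resulting boundary term into a solid integral by averaging over radii, and estimates the term $\int |u|^{p-1}|\nabla u|$ by H\"older together with the ordinary $p=2$ Caccioppoli inequality; this is exactly where $\varepsilon(\dint_{B_{2r}}|u|^2\,dy)^{p/2}$ enters, together with a truncation $\rho_\delta$ of the negative power and the fact from \cite{Lan} that the integrals over $\{|u|\le\delta\}$ vanish in the limit. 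Your idea of keeping a cutoff but placing it in the test function and regularizing with $v_\delta$ is a genuinely different, potentially cleaner route for $p<2$, and your final Sobolev-plus-iteration step matches the paper's \eqref{emb}--\eqref{RHk}; but until the uniform regularized coercivity, the truncation, and the rough-coefficient limiting argument are actually carried out, the proposal has a genuine gap at its core.
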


\noindent{\it Remark.} Clearly, if $q\ge 2$ in \eqref{RHthm1} and if $p\ge 2$ in \eqref{gradp} one can take $\varepsilon=0$ as the $L^2$ average of $u$ can be controlled by the first term on the right hand side of each of the two inequalities.\vglue2mm

In general, one can not expect a larger range of $p$ in the reverse H\"older condition of \eqref{RHthm1}. In \cite{May}, Mayboroda gives a counterexample to \eqref{RHthm1} when $q=2$  and for any $p > \frac{2n}{n-2}$ under the assumption of \eqref{EllipAA} (which is the same as $2$-ellipticity). 

\smallskip

We apply this regularity result to the question of solvability of $L^p$ Dirichlet problem for elliptic operators of this type. This part of the paper is motivated by the known results concerning boundary value problems 
for second order elliptic equations in divergence form, when the coefficients  are {\bf real} and satisfy
a certain natural, minimal smoothness condition (refer \cite{DPP,DPR,KP01}).  The literature on solvability of boundary value problems for complex coefficient operators in $\mathbb R^n$ is limited, except when the matrix $A$ is of block form. For block matrices $A$ in $\mathcal L = \mbox{div} A(x)\nabla$, there are numerous results on on $L^p$-solvability of the Dirichlet, regularity and Neumann problems, starting with the solution of the Kato problem, where the coefficients of the block matrix are also assumed to be independent of the transverse variable (this assumption is usually referred in literature as \lq\lq $t$-independent", in our notation it is the $x_0$ variable).
See \cite{AHLMT} and \cite{HM} and the references therein. 
For matrices not of block form, there are solvability results in various special cases assuming that the solutions satisfy De Giorgi - Nash - Moser estimates. See \cite{AAAHK} and \cite{HKMPreg} for example. The latter paper is also concerned with operators that are $t$-independent.
Finally, there are perturbation results in a variety of special cases, such as \cite{AAM} and \cite{AAH}; the first paper shows that solvability in $L^2$ implies solvability in $L^p$ for $p$ near $2$, and the second paper has $L^2$-solvability results for small $L^\infty$ perturbations of real elliptic operators when the complex matrix is $t$-independent.

Our solvability result for operators of the form $\mathcal{L}=\mbox{div} A(x)\nabla+B(x)\cdot\nabla$ can be applied on a domains above a Lipschitz graph in $\mathbb R^n$. We do not assume \lq\lq $t$-independence".
Instead, we assume the coefficients 
$A$ and $B$ satisfy a natural Carleson condition that has appeared in the literature so far only for real elliptic operators.(\cite {KP01}, \cite{DPP}, and \cite{DPR}). The Carleson condition on $A$, \eqref{Car_hatAA} below, holds uniformly on Lipschitz subdomains, and is thus a natural condition in the context of chord-arc domains as well. The paper \cite{HMTo} connects geometric information about the boundary of the domain to information about the elliptic measure of operators that satisfy some closely related conditions. 

\smallskip

The second main theorem of the paper establishes the
solvability of $\mathcal Lu=0$ with $L^{p}$ Dirichlet boundary data for variable coefficient complex coefficient operators satisfying these Carleson conditions on coefficients. The solvability of the $L^2$ Dirichlet problem (where 2-ellipticity is the standard assumption) for complex coefficient operators satisfying  \eqref{Car_hatAA} is a consequence of the $L^2$ results for non-symmetric elliptic systems in \cite{DHM}. In fact, as is typical in this theory, Dindo\v{s}, Hwang, and M. Mitrea actually obtain in \cite{DHM} $L^p$ results for elliptic systems, for $p \in (2-\varepsilon, 2+\varepsilon)$.
One of the novelties of this paper is showing that $p$-ellipticity extends solvability of the $L^p$-Dirichlet problem to a broader range of $L^p$. To prove this, we adapt the method of proof that was established in \cite {KP01}; however, the lack of continuity of solutions and the absence of a maximum principle requires new ideas to generalize this approach. It turns out the $p$-ellipticity condition is also the correct one for establishing solvability of perturbations of elliptic operators as well as solvability of the regularity problems. We shall take this up in a separate manuscript.

\begin{theorem}\label{S3:T1}  Let $1<p<\infty$, and let $\Omega$ be the upper half-space ${\mathbb R}^n_+=\{(x_0,x'):\,x_0>0\mbox{ and } x'\in{\mathbb R}^{n-1}\}$. Consider the operator 
$$ \mathcal Lu = \partial_{i}\left(A_{ij}(x)\partial_{j}u\right) 
+B_{i}(x)\partial_{i}u$$
and assume that the matrix $A$ is $p$-elliptic with constants $\lambda_p,\Lambda$, $A_{00}=1$ and $\mathscr{I}m\,A_{0j}=0$ for all $1\leq j \leq n-1$.
Assume that
\begin{equation}\label{Car_hatAA}
d{\mu}(x)=\sup_{B_{\delta(x)/2}(x)}\left[|\nabla{A}(x)|^{2} + |{B}(x)|^{2} \right]\delta(x)\,dx
\end{equation}
is a Carleson measure in $\Omega$. Let us also denote 
\begin{equation}\label{Car_hatAAB}
d{\mu'}(x)=\sup_{B_{\delta(x)/2}(x)}\left[\textstyle\sum_j\left|\partial_0 A_{0j}\right|^{2}+\left|\textstyle\sum_j\partial_j A_{0j}\right|^{2} + |{B}(x)|^{2} \right]\delta(x)\,dx.
\end{equation}

Then there exist $K=K(\lambda_p,\Lambda,\|\mu\|_{\mathcal C},n,p)>0$ and $C(\lambda_p, \Lambda ,\|\mu\|_{\mathcal C}, n,p)>0$ such that if
\begin{equation}\label{Small-Cond}
\|\mu'\|_{\mathcal C} < K
\end{equation}
then the $L^p$-Dirichlet problem  
 
 \begin{equation}\label{E:D2}
\begin{cases}
\,\,{\mathcal L}u=0 
& \text{in } \Omega,
\\[4pt]
\quad u=f & \text{ for $\sigma$-a.e. }\,x\in\partial\Omega, 
\\[4pt]
\tilde{N}_{p,a}(u) \in L^{p}(\partial \Omega), &
\end{cases}
\end{equation}
is solvable and the estimate
\begin{equation}\label{Main-Est}
\|\tilde{N}_{p,a} (u)\|_{L^{p}(\partial \Omega)}\leq C\|f\|_{L^{p}(\partial \Omega;{\BBC})}
\end{equation}
holds for all energy solutions $u$ with datum $f$.
\end{theorem}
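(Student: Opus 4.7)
The plan is to adapt the approach of Kenig--Pipher \cite{KP01} and its variable-coefficient extension in \cite{DPP}, replacing the standard inner product by the $p$-adapted pairing induced by $\mathcal{J}_p$ from \cite{CD}, and substituting the interior regularity of Theorem \ref{Regularity} for the pointwise De Giorgi--Nash--Moser bounds that are unavailable in the complex case. By the $L^2$ result of \cite{DHM}, an energy solution $u$ with datum $f$ exists whenever $f\in L^2\cap L^p$; hence the task is to prove the a priori estimate \eqref{Main-Est} for such $u$, from which the full solvability for $f\in L^p$ follows by approximation and the usual non-tangential convergence arguments.

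The normalization $A_{00}=1$ and $\mathscr{I}m\,A_{0j}=0$ allows one to solve $\mathcal{L}u=0$ for $\partial_0^2 u$ so that the coefficient derivatives entering the resulting expression are exactly those measured by $\mu'$ rather than by the full Carleson measure $\mu$; this explains the asymmetric roles of $\mu$ and $\mu'$ in the hypotheses. Building on this reduction, the core of the argument is a Rellich-type identity obtained by pairing the equation against a test function of the form $|u|^{p-2}\bar u\,\partial_0 u\,\chi(x_0)$, with $\chi$ a smooth cutoff chosen so that an integration by parts in $x_0$ produces the boundary integral $\int_{\partial\Omega}|f|^p$. The $p$-ellipticity hypothesis \eqref{pEll} ensures that the resulting interior integral of $|\nabla u|^2|u|^{p-2}$ (a square-function-type quantity weighted by $x_0$) has a positive real part bounded below by a multiple of itself, providing the coercivity needed to absorb error terms.

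The remaining terms involve $\nabla A$ and $B$ and are handled via the Carleson embedding theorem applied to $\mu$ and $\mu'$. After Cauchy--Schwarz they reduce to the coercive solid integral (reabsorbed using $p$-ellipticity and \eqref{Small-Cond}) plus quantities of the form $\int_\Omega|u|^p\,d\mu$ and $\int_\Omega|u|^p\,d\mu'$, both of which are bounded by $(\|\mu\|_{\mathcal C}+\|\mu'\|_{\mathcal C})\,\|\tilde N_{p,a}(u)\|_{L^p(\partial\Omega)}^p$ once one passes from pointwise values of $|u|^p$ to $L^p$ averages on Whitney balls---and this is precisely where \eqref{RHthm1} is invoked as a substitute for the continuity of $u$. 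A parallel comparison, in the spirit of \cite{DPP}, bounds $\|\tilde N_{p,a}(u)\|_{L^p}$ by the corresponding square function together with $\|f\|_{L^p}$, again using Theorem \ref{Regularity} at each step to justify passing between pointwise and averaged interior values of $u$.

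The main obstacle is that the classical Kenig--Pipher framework repeatedly replaces pointwise values of $|u(x)|$ by $\tilde N(u)$ at the boundary projection of $x$, a manoeuvre which requires continuity of $u$ and, implicitly, a maximum principle---neither available in the complex-coefficient setting. Theorem \ref{Regularity} provides the correct replacement, but only when the exponent in the Rellich identity lies in the $p$-elliptic range $(p_0,p_0')$; this is the deeper reason why the range of $p$ for which we obtain solvability coincides with the range of $p$-ellipticity of $A$. The smallness hypothesis \eqref{Small-Cond} is finally used in the absorption step that closes the a priori bound, after which standard density and limiting arguments deliver \eqref{Main-Est} for all $f\in L^p(\partial\Omega;\mathbb{C})$.
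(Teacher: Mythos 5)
Your overall strategy (a $p$-adapted square-function estimate via Carleson embedding, absorption using the smallness of $\|\mu'\|_{\mathcal C}$, and Theorem \ref{Regularity} as the substitute for De Giorgi--Nash--Moser) is the right one, but two steps as you describe them would not go through. First, the core identity: you propose pairing the equation against the Rellich-type multiplier $|u|^{p-2}\bar u\,\partial_0 u\,\chi(x_0)$ and claim that $p$-ellipticity makes the resulting interior integral coercive. That is not the case: the coercivity supplied by $p$-ellipticity (Theorem \ref{MTdiss}) applies only to pairings of the form $\mathscr{R}e\int\langle A\nabla u,\nabla(|u|^{p-2}u)\rangle\chi\,dx$ with a nonnegative scalar weight $\chi$, not to multipliers containing an extra factor $\partial_0 u$ (which in addition involves second derivatives of $u$ and has no sign structure). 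The estimate in the paper is obtained instead by testing against $|u|^{p-2}\bar u\,x_0\,\zeta(x')$, with $\zeta$ a horizontal cutoff; the boundary term $\int|f|^p$ then arises from the piece where $\partial_0$ hits the weight $x_0$, using $\mathscr{R}e(|u|^{p-2}\bar u\,\partial_0 u)=p^{-1}\partial_0(|u|^p)$ together with $A_{00}=1$, and the reality of $A_{0j}$ is used to re-integrate by parts so that only $\partial_0A_{0j}$ and $\sum_j\partial_jA_{0j}$ (i.e.\ exactly $\mu'$) appear.

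Second, and more seriously, your bound of $\|\tilde N_{p,a}(u)\|_{L^p}$ by the square function plus $\|f\|_{L^p}$ ``in the spirit of \cite{DPP}'' has no workable mechanism behind it: the real-variable arguments of \cite{KP01,DPP} for the direction $N\lesssim S$ rest on pointwise H\"older continuity of solutions, comparisons on sawtooth domains and the maximum principle, none of which are available for complex coefficients, and Theorem \ref{Regularity} alone does not replace them. The paper closes this direction differently: it rewrites the complex scalar equation as a real elliptic system and invokes the $L^2$ result $\|\tilde N_{2,a}(u)\|_{L^q}\lesssim\|S_{2,a}(u)\|_{L^q}$ from \cite{DHM}, then uses the pointwise H\"older interpolation $S_{2,a}(u)^2\le S_{p,a}(u)\,S_{p',a}(u)$, a good-$\lambda$ estimate giving $\|S_{p',a}(u)\|_{L^q}\lesssim\|\tilde N_{p',a}(u)\|_{L^q}$, and the equivalence of $L^p$- and $L^2$-averaged nontangential maximal functions (Proposition \ref{P3.5}) to deduce $\|\tilde N_{p,a}(u)\|\lesssim\|S_{p,a}(u)\|$. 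Finally, your absorption step silently assumes that $\|\tilde N_{p,a}(u)\|_{L^p}$ (and, when $p<2$, $\|S_{p',a}(u)\|_{L^p}$) is finite; establishing this a priori finiteness is a genuine issue and occupies a substantial part of the paper's proof (energy solutions on strips $\Omega^h$ with smooth coefficients, interpolation with Agmon--Douglis--Nirenberg bounds for $p\ge2$, and a \cite{DKV}-style extrapolation/bootstrap in the range $p<2$), followed by limiting arguments in $h$ and in the smoothing of the coefficients.
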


In the statement of this theorem, we've used some notation that will be defined in subsequent sections. We will also recall there the concept of Carleson measure, discuss the notions of $L^p$ solvability and energy solutions and define $\tilde{N}_{p}$ which is a variant of the nontangential maximal function defined using $L^p$ averages of the solution $u$. By Theorem \ref{Regularity}, instead of $L^p$ averages we could use $L^q$ averages for $q$ in the range $(p_0,\frac{p_0'n}{n-2})$ to obtain the same result. Also, see section \ref{S3} for a detailed discussion of some further assumptions we make to prove Theorem \ref{S3:T1} on Lipschitz domains.

Classically, the $L^p$ boundedness of the nontangential maximal function of a solution (in our case, estimate \eqref{Main-Est} above) gives nontangential convergence of the solution to its boundary values. Since the nontangential maximal function of our complex-valued solution will require smoothing by averaging, we will also get a nontangential convergence result, but stated for averages of solutions. This convergence of averages is a consequence of solvability in $L^p$ and is not connected with the assumptions on the coefficients of the equation. For this reason, that very general result is given in an appendix at the end.

We now state, as a further corollary of the second main theorem, a result for matrices $A$ in ``block form". This corollary uses the fact that the assumption that the Carleson measure norm is small is needed only on the last row ($A_{0j}$) of the matrix $A$. This latter observation was pointed out to us by S. Mayboroda - see \cite{DFM2} and also \cite{DFM}. Hence, we have the following.

\begin{corollary}\label{block} Suppose the operator $\mathcal L$ on $\mathbb R^n_+$ has the form
$${\mathcal L}u=\partial^2_0 u +\sum_{i,j=1}^{n-1}\partial_i(A_{ij}\partial_j u)$$
where the matrix $A$ has coefficients satisfying the Carleson condition \eqref{Car_hatAA}.

Then for all $1<p<\infty$ for which $A$ is $p$-elliptic, the $L^p$-Dirichlet problem  \eqref{E:D} is solvable for $\mathcal L$
and the estimate
\begin{equation}\label{Main-Est2X}
\|\tilde{N}_{p,a} u\|_{L^{p}(\partial \Omega)}\leq C\|f\|_{L^{p}(\partial \Omega;{\BBC})}
\end{equation}
holds for all energy solutions $u$ with datum $f$. 
\end{corollary}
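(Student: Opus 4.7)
The plan is to deduce the corollary as a direct application of Theorem \ref{S3:T1}, where the main point is to verify that each hypothesis of that theorem is automatically satisfied by a block-form operator, and in particular that the smallness requirement becomes vacuous.

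First, I would match the structural normalizations demanded by Theorem \ref{S3:T1}. Writing $\mathcal{L}$ in the divergence form $\partial_i(\widehat A_{ij}\partial_j u)+\widehat B_i\partial_i u$, the block shape forces $\widehat A_{00}=1$, $\widehat A_{0j}=\widehat A_{j0}=0$ for $1\le j\le n-1$, and $\widehat B\equiv 0$. In particular $\mathscr{I}m\,\widehat A_{0j}=0$ trivially, so the algebraic assumptions on the last row are met.

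Second, I would reconcile the $p$-ellipticity hypotheses. The full coefficient matrix $\widehat A=\mathrm{diag}(1,A)$ is $p$-elliptic if and only if the interior block $A=(A_{ij})_{1\le i,j\le n-1}$ is, since for $\xi=(\xi_0,\xi')\in{\mathbb C}^n$ the $\mathbb R$-linearity of $\mathcal J_p$ yields
\begin{equation*}
\mathscr{R}e\,\langle \widehat A\xi,\mathcal J_p\xi\rangle
= \tfrac{(\mathscr{R}e\,\xi_0)^2}{p}+\tfrac{(\mathscr{I}m\,\xi_0)^2}{p'}+\mathscr{R}e\,\langle A\xi',\mathcal J_p\xi'\rangle,
\end{equation*}
so the assumption ``$A$ is $p$-elliptic'' in the statement transfers to the coefficient matrix of $\mathcal L$ that Theorem \ref{S3:T1} requires, with the same range of $p$.

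Third, and this is the crux of the reduction, I would compute the Carleson measure $\mu'$ from \eqref{Car_hatAAB} for our operator. Because $\widehat A_{00}\equiv 1$ and $\widehat A_{0j}\equiv 0$ for $j\ge 1$, every derivative $\partial_0\widehat A_{0j}$ vanishes identically, and $\sum_j\partial_j \widehat A_{0j}=0$ as well; together with $\widehat B\equiv 0$ this gives $\mu'\equiv 0$. Hence $\|\mu'\|_{\mathcal C}=0<K$ for any $K$, so the smallness hypothesis \eqref{Small-Cond} is satisfied trivially. The background Carleson condition \eqref{Car_hatAA} for $\mu$ is assumed in the corollary's hypotheses. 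Invoking Theorem \ref{S3:T1} then produces the desired $L^p$-solvability together with the estimate \eqref{Main-Est2X} throughout the $p$-ellipticity range.

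I do not anticipate any real obstacle: the content of the corollary is precisely the observation (due to Mayboroda, as the authors note) that the smallness condition \eqref{Small-Cond} involves only quantities tied to the last row of the coefficient matrix, and these vanish identically in block form. The only careful step is the equivalence of $p$-ellipticity for $\widehat A$ and for the interior block $A$, which is a line of direct computation.
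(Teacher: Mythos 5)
Your proposal is correct and is essentially the paper's own argument: the corollary is stated as a direct consequence of Theorem \ref{S3:T1} precisely because the measure $\mu'$ in \eqref{Car_hatAAB} involves only the last row $A_{0j}$ and the drift $B$, all of which vanish identically for a block operator, so \eqref{Small-Cond} holds trivially. Your explicit check that $p$-ellipticity of $\mathrm{diag}(1,A)$ is equivalent to $p$-ellipticity of the interior block is a correct (and welcome) detail that the paper leaves implicit.
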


\smallskip

{\it Acknowlegements.} We would like to thank S. Mayboroda for helpful comments on an earlier draft of this manuscript.


\section{$L^p$ dissipativity, $p$-ellipticity and regularity results}
\label{SS:Nor}

The concept of $L^p$ dissipativity was defined in a series of papers by Cialdea and Maz'ya \cite{CM2, CM1, CM3} and was motivated by the effort to characterize when semigroups generated by second order elliptic operators are 
contractive in $L^p$. In particular, it has long been known that scalar second order elliptic operators with real coefficients generate contractive semigroups in $L^p$ for all $1\le p\le\infty$. The case 
of operators generating $L^\infty$-contractive semigroups is studied in \cite{ABBO}.

Following \cite{CM2} let $\mathcal L(u,v)$ be the the sesquilinear form
$$\mathcal L(u,v)=\int_{\Omega} \langle A(x)\nabla u,\nabla v\rangle\, dx,$$
where $\langle\cdot,\cdot\rangle$ is the usual inner product on $\mathbb C^n$. Clearly,
$\mathcal L(u,v)$ is well defined for $(u,v)\in C^1_0(\Omega)\times C^1_0(\Omega)$ consisting of
complex value functions having compact support in $\Omega$ with continuous first derivative.

\begin{definition} (Cialdea-Maz'ya) Let $1<p<\infty$. The form $\mathcal L$ is called $L^p$ dissipative if for all
$u\in C_0^1(\Omega)$
\begin{eqnarray}
&&\mbox{Re }\mathcal L(u,|u|^{p-2}u)\ge 0,\qquad\mbox{if } p\ge 2,\\
&&\mbox{Re }\mathcal L(u|u|^{p'-2},u)\ge 0,\qquad\mbox{if } 1<p< 2.
\end{eqnarray}
\end{definition}

\begin{theorem} \label{TCM}\cite[Theorem 1, Corollary 4 and Corollary 6]{CM2} A sufficient condition for the form $\mathcal L$ to be $L^p$ dissipative is that
\begin{equation}
\frac4{pp'}\langle\mathscr{R}e\,A(x)\xi,\xi\rangle+\langle(\mathscr{R}e\,A(x)\eta,\eta\rangle+2\langle p^{-1}
\mathscr{I}m\,A(x)-{p'}^{-1}\mathscr{I}m\,A^t(x))\xi,\eta\rangle\ge 0,\label{Dcond}
\end{equation}
for all $\xi,\,\eta\in\mathbb R^n$. If in addition the matrix $\mathscr{I}m\,A$ be symmetric, i.e., $\mathscr{I}m\,A\,=\mathscr{I}m\,A^t$ then this condition is also necessary and is equivalent to
\begin{equation}\label{Acond}
|p-2||\langle \mathscr{I}m\,A(x)\xi,\xi\rangle|\le 2\sqrt{p-1}\langle\mathscr{R}e\,A(x)\xi,\xi\rangle,\qquad\forall \xi\in{\BBR}^n\mbox{ and a.e. }x\in\Omega.
\end{equation}
\eqref{Acond} must hold even in the non-symmetric case, but then this condition might not be sufficient.\vglue1mm

In particular, if we set 
\begin{equation}
\tilde\mu=\essinf_{(x,\xi) \in \mathcal M} 
\frac{\langle\mathscr{R}e\,A(x)\xi,\xi\rangle}{|\langle \mathscr{I}m\,A(x)\xi,\xi\rangle|}\label{MU}
\end{equation}
where 
$\mathcal M$ is the set of $(\xi,x)$, $x \in\mathbb R^n$, $x\in\Omega$ such that $\langle \mathscr{I}m\,A(x)\xi,\xi\rangle\ne 0$. If $\mathscr{I}m\,A=0$ for any $x\in\Omega$ then $\mathcal L$ is $L^p$ dissipative for all $p>1$. If $\mathscr{I}m\,A$ is symmetric but does not vanish identically on $\Omega$ then $\mathcal L$ is $L^p$ dissipative 
if and only if
\begin{equation}\label{Pcond}
2+2\tilde\mu\left(\tilde\mu-\sqrt{\tilde\mu^2+1}\right)\le p\le 2+2\tilde\mu\left(\tilde\mu+\sqrt{\tilde\mu^2+1}\right).
\end{equation}
\end{theorem}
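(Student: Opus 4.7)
My plan splits the theorem into three pieces: (i) sufficiency of \eqref{Dcond} by a direct pointwise computation of $\mathscr{R}e\,\mathcal{L}(u,|u|^{p-2}u)$; (ii) necessity of \eqref{Dcond} under the symmetry assumption on $\mathscr{I}m\,A$ via a test-function construction; and (iii) the algebraic reduction of \eqref{Dcond} to the explicit inequalities \eqref{Acond} and \eqref{Pcond}.

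For sufficiency with $p\ge 2$, I would expand $\mathscr{R}e\,\mathcal{L}(u,|u|^{p-2}u)$ pointwise by computing $\nabla(|u|^{p-2}u)$ via the product rule together with the identity $\nabla|u|=\mathscr{R}e(\bar u\nabla u)/|u|$. Writing $\nabla u=\xi+i\eta$ with $\xi,\eta\in\mathbb{R}^n$ and performing the rescaling $\xi\mapsto(\sqrt{pp'}/2)\xi$, the real part of the integrand at each point collapses to precisely the left-hand side of \eqref{Dcond}, so \eqref{Dcond} forces pointwise nonnegativity of the integrand and hence $L^p$-dissipativity. The case $p<2$ follows by the same computation applied to $\mathscr{R}e\,\mathcal{L}(u|u|^{p'-2},u)$; a short verification shows that \eqref{Dcond} is invariant under the simultaneous swap $p\leftrightarrow p'$ and $\xi\leftrightarrow\eta$, so the pointwise identity extends transparently to this regime.

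For necessity in the symmetric case, I would argue by contradiction. If \eqref{Dcond} fails at some $x_0\in\Omega$ for some $(\xi_0,\eta_0)\in\mathbb{R}^n\times\mathbb{R}^n$, consider a test function of the form
\[
u_\delta(x)=\psi_\delta(x)\,\exp\bigl(\lambda(\xi_0+i\eta_0)\cdot(x-x_0)\bigr),
\]
where $\psi_\delta\in C_0^1$ is a smooth nonnegative bump concentrated at $x_0$ on scale $\delta$ with $\psi_\delta(x_0)=1$, and $\lambda$ is large. A direct computation shows that, to leading order as $\lambda\to\infty$, the quantity $\mathscr{R}e\,\mathcal{L}(u_\delta,|u_\delta|^{p-2}u_\delta)$ is proportional to $\lambda^2\int|u_\delta|^p\,dx$ times the quadratic form in \eqref{Dcond} evaluated at $(x_0,\xi_0,\eta_0)$; by hypothesis this is negative, contradicting $L^p$-dissipativity. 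The symmetry $\mathscr{I}m\,A=\mathscr{I}m\,A^t$ is essential here: the antisymmetric part of $\mathscr{I}m\,A$ contributes to \eqref{Dcond} only through a term that the leading-order asymptotics cannot isolate independently, which explains why \eqref{Dcond} is sufficient but not necessary in the non-symmetric setting.

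The reduction of \eqref{Dcond} to \eqref{Acond} when $\mathscr{I}m\,A$ is symmetric is a pointwise completion of the square: the $\eta$-quadratic part of \eqref{Dcond} is positive definite thanks to \eqref{EllipAA}, so minimizing over $\eta$ with $\xi$ held fixed leaves a quadratic in $\xi$ alone whose nonnegativity for all $\xi$ is exactly \eqref{Acond}. The characterization \eqref{Pcond} follows by essinfing \eqref{Acond} over $(x,\xi)\in\mathcal{M}$ to obtain the scalar inequality $|p-2|/(2\sqrt{p-1})\le\tilde\mu$, equivalently $(p-2)^2\le 4\tilde\mu^2(p-1)$, whose two roots $2+2\tilde\mu(\tilde\mu\pm\sqrt{\tilde\mu^2+1})$ bracket the admissible interval. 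The principal obstacle is step (ii): the test-function construction demands a delicate tuning of the localization scale $\delta$ against the amplitude $\lambda$ so that the leading-order term isolates the quadratic form at $(x_0,\xi_0,\eta_0)$ while all lower-order contributions involving derivatives of $\psi_\delta$ vanish relative to the dominant $\lambda^2$-term, and one must verify along the way that $u_\delta$ remains in a class where the $L^p$-dissipativity inequality is actually applicable.
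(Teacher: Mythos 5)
The paper does not prove Theorem \ref{TCM} at all: it is quoted verbatim from Cialdea--Maz'ya \cite{CM2}, so your sketch has to be measured against their argument, and there are genuine gaps in it, the most serious being your step (ii). The quantity $\mathscr{R}e\,\mathcal L(u,|u|^{p-2}u)$ is, pointwise, a quadratic form not in $(\mathscr{R}e\,\nabla u,\mathscr{I}m\,\nabla u)$ but in the phase-rotated pair $X=\nabla|u|=\mathscr{R}e(|u|^{-1}\overline{u}\nabla u)$ and $Y=\mathscr{I}m(|u|^{-1}\overline{u}\nabla u)$. For your test function $u_\delta=\psi_\delta e^{\lambda(\xi_0+i\eta_0)\cdot(x-x_0)}$ one has $X=\nabla|u_\delta|$ and $Y=\lambda\eta_0|u_\delta|$ exactly, so the cross term of \eqref{Dcond} contributes a constant multiple of $\lambda\int\langle \mathscr{I}m\,A\,\nabla(|u_\delta|^{p}),\eta_0\rangle\,dx$, a perfect derivative which vanishes identically when the coefficients are (locally) constant: the collar where $\nabla\psi_\delta\neq0$ cancels the bulk contribution exactly, so no tuning of $\delta$ against $\lambda$ rescues the claimed leading-order asymptotics ``$\lambda^2\int|u_\delta|^p$ times the form at $(x_0,\xi_0,\eta_0)$''. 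Indeed, if that asymptotic were correct it would prove \eqref{Dcond} necessary with no symmetry assumption, which is false: for $A=I+iM$ with $M$ constant, antisymmetric and large, the sesquilinear form coincides with that of the Laplacian (integrate by parts), hence is $L^p$ dissipative for every $p$, while \eqref{Dcond} fails. Even in the symmetric case your linear-phase construction cannot detect a violation (in one dimension, $A=1+ib$ with $b$ large violates \eqref{Dcond}, yet the cross term above is exactly zero for your $u_\delta$). The actual route in \cite{CM2} is different: one first proves necessity of \eqref{Acond} in full generality, using test functions whose modulus and phase are genuinely nonlinear so that the cross term survives (it is read off as a discriminant condition on a polynomial in an auxiliary parameter), and only then passes to necessity of \eqref{Dcond} using the symmetry of $\mathscr{I}m\,A$.

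Two further points. In your sufficiency step the identification ``$\nabla u=\xi+i\eta$'' is not the one that makes the integrand collapse to \eqref{Dcond}; the correct variables are $X,Y$ above (equivalently $\nabla|v|$ and $\mathscr{I}m(|v|^{-1}\overline{v}\nabla v)$ with $v=|u|^{p/2-1}u$, as in the proof of Theorem \ref{MTdiss} in this paper), and one also needs the regularization $g_\varepsilon=(|v|^2+\varepsilon^2)^{1/2}$ because $|u|^{p-2}u$ is not an admissible $C^1_0$ test function, especially for $p<2$; these are fixable but the pointwise identity as you state it is false (test it on $u=iw$ with $w>0$ real). In step (iii), minimizing \eqref{Dcond} over $\eta$ does not yield \eqref{Acond}: it yields the stronger inequality $\frac{4}{pp'}\langle\mathscr{R}e\,A\,\xi,\xi\rangle\ge(1-2/p)^2\langle(\mathscr{R}e\,A)^{-1}\mathscr{I}m\,A\,\xi,\mathscr{I}m\,A\,\xi\rangle$. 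The correct elementary equivalence under symmetry is: \eqref{Dcond}$\Rightarrow$\eqref{Acond} by restricting to $\eta=t\xi$ and taking the discriminant in $t$, and \eqref{Acond}$\Rightarrow$\eqref{Dcond} by polarizing $\langle\mathscr{I}m\,A\,\xi,\eta\rangle$ and using the parallelogram law for $\langle\mathscr{R}e\,A\,\cdot,\cdot\rangle$ --- this is precisely where the symmetry of $\mathscr{I}m\,A$ enters, a point your sketch leaves unexplained. Your derivation of \eqref{Pcond} from \eqref{Acond} by solving $(p-2)^2\le 4\tilde\mu^2(p-1)$ is correct.
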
\vglue2mm

For our purposes \eqref{Dcond} is not sufficient as in particular this condition does not imply ellipticity when $p=2$. To guarantee ellipticity, a stronger lower bound is needed, namely that the left hand side of \eqref{Dcond} is greater than $\varepsilon(|\xi|^2+|\eta|^2)$. This yields the condition \eqref{CMell}, which when $p=2$ is just the usual ellipticity for complex coefficients.

As we have observed in the introduction, \eqref{CMell} can be simply written as $\Delta_p(A)>0$, where \eqref{Deltap} defines $\Delta_p(A)$. (This was introduced in \cite{CD}) and is in turn equivalent to $|1-2/p|<\mu(A)$). Hence the following holds.

\begin{theorem} \label{NDC} Let $A\in L^\infty(\Omega)$ be a matrix that is uniformly elliptic: 
for some $\lambda,\Lambda>0$ and almost every $x\in\Omega$ we have
\begin{equation}\label{EllipAAAA}
\lambda|\xi|^{2}\leq \mathscr{R}e\,\sum_{i,j=0}^{n-1} 
 A_{ij}(x)\xi_{i}\overline{\xi_{j}} \quad\text{and}\quad |\langle A\xi,\eta\rangle|\leq\Lambda|\xi||\eta| 
 \end{equation}
for all $\xi,\eta\in \BBC^n$. Then there exists $p_0\in [1,2)$ (with $p_0=1$ if and only if $\mathscr{I}m\,A\,=0$) such that the matrix $A$ is $p$-elliptic if and only if $p\in (p_0,p_0')$.  That is $\Delta_p(A)>0$, equivalently 
\begin{equation}
\langle \mathscr{R}e\,A\,\lambda,\lambda\rangle+\langle \mathscr{R}e\,A\,\eta,\eta\rangle+
\left\langle \left(\textstyle\sqrt{\frac{p'}{p}}
\mathscr{I}m\,A-\sqrt{\frac{p}{p'}}\mathscr{I}m\,A^t\right)\lambda,\eta\right\rangle
\ge \varepsilon(p)(|\lambda|^2+|\eta|^2),\label{DDDcond}
\end{equation}
for some $\varepsilon(p)>0$ and all $\lambda,\eta\in\mathbb R^n$. Here $p_0=\frac{2}{1+\mu(A)}$ where
\begin{equation}\label{Defmu2}
\mu(A)=\essinf_{(x,\xi)\in\mathcal M}\mathscr{R}e\,\frac{\langle A(x),\xi,\xi\rangle}{|\langle A(x),\xi,\overline{\xi}\rangle|}\ge\frac{\lambda}{\Lambda},
\end{equation}
where $\mathcal M$ is the set $(\xi,x)$, $x \in\mathbb C^n$, $x\in\Omega$ such that $\langle A(x)\xi,\overline{\xi}\rangle\ne 0$. 
If in addition the matrix $A$ has symmetric imaginary part ($\mathscr{I}m\,A\,=\mathscr{I}m\,A^t$) this further simplifies to
\begin{equation}\label{PPcond}
p_0=2+2\tilde\mu\left(\mu-\sqrt{\tilde\mu^2+1}\right),
\end{equation}
where 
\begin{equation}
\tilde\mu=\essinf_{(x,\xi)\in\widetilde{\mathcal M}}\frac{\langle\mathscr{R}e\,A(x)\xi,\xi\rangle}{|\langle \mathscr{I}m\,A(x)\xi,\xi\rangle|}\label{MUU}
\end{equation}
and $\widetilde{\mathcal M}$ is the set of $(\xi,x)$, $\xi \in\mathbb R^n$, $x\in\Omega$ such that $\langle \mathscr{I}m\,A(x)\xi,\xi\rangle\ne 0$. 
\end{theorem}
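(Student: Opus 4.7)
My plan is to prove Theorem \ref{NDC} in three stages, reducing everything to elementary algebra via a phase-rotation argument.

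First, I establish the equivalence of \eqref{DDDcond} with $\Delta_p(A)>0$ by direct computation. Writing $\xi=\alpha+i\beta$ with $\alpha,\beta\in\BBR^n$ and using $1/p+1/p'=1$ together with $1/p-1/p'=-(1-2/p)$ yields the identity $\cJ_p\xi=\tfrac{1}{2}\xi-\tfrac{1}{2}(1-2/p)\bar\xi$. Taking real parts gives
\[2\,\mathscr{R}e\,\langle A\xi,\cJ_p\xi\rangle=\mathscr{R}e\,\langle A\xi,\xi\rangle-(1-2/p)\,\mathscr{R}e\,\langle A\xi,\bar\xi\rangle.\]
Expanding the right-hand side back in terms of $\alpha,\beta$ and the entries of $\mathscr{R}e\,A$ and $\mathscr{I}m\,A$ recovers the left-hand side of \eqref{DDDcond} (up to the factor of $2$) under the identification $\lambda=\alpha$, $\eta=\beta$. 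This realizes the ``short calculation'' mentioned above \eqref{Ellmu}.

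Second, I translate $\Delta_p(A)>0$ into $|1-2/p|<\mu(A)$ by exploiting phase invariance. The Hermitian form $\langle A\xi,\xi\rangle$ is invariant under $\xi\mapsto e^{i\theta}\xi$, while $\langle A\xi,\bar\xi\rangle$ picks up a factor $e^{2i\theta}$. Choosing $\theta$ appropriately makes $(1-2/p)\mathscr{R}e\,\langle A\xi,\bar\xi\rangle=|1-2/p|\,|\langle A\xi,\bar\xi\rangle|$; hence the infimum of $2\,\mathscr{R}e\,\langle A\xi,\cJ_p\xi\rangle$ over unit $\xi\in\BBC^n$ equals that of $\mathscr{R}e\,\langle A\xi,\xi\rangle-|1-2/p|\,|\langle A\xi,\bar\xi\rangle|$. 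For $\xi$ with $\langle A\xi,\bar\xi\rangle\ne 0$, uniform positivity is equivalent to $\mathscr{R}e\,\langle A\xi,\xi\rangle/|\langle A\xi,\bar\xi\rangle|>|1-2/p|$; for the remaining $\xi$ the ellipticity bound $\mathscr{R}e\,\langle A\xi,\xi\rangle\ge\lambda|\xi|^2$ handles matters directly. Taking essential infimum shows $\Delta_p(A)>0\iff\mu(A)>|1-2/p|$. Solving for $p$ yields $p\in(2/(1+\mu),2/(1-\mu))$, and since $2/(1-\mu)$ is the conjugate exponent of $p_0:=2/(1+\mu)$, the $p$-elliptic range is $(p_0,p_0')$. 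The bound $\mu(A)\ge\lambda/\Lambda$ is immediate from \eqref{EllipAAAA}, and $p_0=1$ (i.e., $\mu(A)=1$) forces $\mathscr{I}m\,A\equiv 0$ via the triangle-inequality extremizers.

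Third, for the symmetric case $\mathscr{I}m\,A=\mathscr{I}m\,A^t$, I observe that $\mu(A)$ and $\tilde\mu$ are scale-free in $\xi$ and depend on $\mathscr{R}e\,A$ only through its symmetric part, so they are invariant under real linear changes of basis $\xi=T\eta$. A simultaneous congruence (valid since $\mathscr{R}e\,A>0$ by ellipticity) reduces to the normal form $\mathscr{R}e\,A=I$, $\mathscr{I}m\,A=\mathrm{diag}(q_1,\ldots,q_n)$. Then $\mathscr{R}e\,\langle A\xi,\xi\rangle=|\xi|^2$ and
\[|\langle A\xi,\bar\xi\rangle|=\Bigl|\sum_i(1+iq_i)\xi_i^2\Bigr|\le\max_i\sqrt{1+q_i^2}\,|\xi|^2,\]
with equality at $\xi=e_k$ for the index $k$ maximizing $|q_k|$. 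Hence $\mu(A)=1/\sqrt{1+(\max_i|q_i|)^2}$, and a parallel computation on real $\xi$ gives $\tilde\mu=1/\max_i|q_i|$. Combining, $\mu(A)=\tilde\mu/\sqrt{\tilde\mu^2+1}$; substituting into $2/p_0-1=\mu(A)$ and rationalizing produces exactly \eqref{PPcond}, matching the Cialdea--Maz'ya range \eqref{Pcond}. The main obstacle is this last step: verifying that no complex $\xi$ improves on the real coordinate extremizer for $\mu(A)$, so that the triangle-inequality bound is sharp. Once that is established, the algebra to solve $|1-2/p_0|=\mu(A)$ is routine.
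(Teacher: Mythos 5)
Your outline is essentially correct, but it takes a genuinely different route from the paper. The paper does not prove Theorem \ref{NDC} from first principles: the equivalence of \eqref{pEll} with \eqref{CMell}/\eqref{DDDcond} is dismissed as a ``short calculation'', the equivalence $\Delta_p(A)>0\iff|1-2/p|<\mu(A)$ (hence $p_0=2/(1+\mu(A))$) is quoted from Proposition 5.14 of \cite{CD}, and the symmetric-imaginary-part formula \eqref{PPcond} is read off from the Cialdea--Maz'ya range \eqref{Pcond} in Theorem \ref{TCM}. You instead reprove both imported ingredients: the identity $\cJ_p\xi=\tfrac12\xi-\tfrac12(1-2/p)\bar\xi$ and the phase-rotation argument (invariance of $\langle A\xi,\xi\rangle$ versus the factor $e^{2i\theta}$ on $\langle A\xi,\bar\xi\rangle$) is exactly the mechanism behind the \cite{CD} equivalence, and the simultaneous congruence reducing to $\mathscr{R}e\,A=I$, $\mathscr{I}m\,A=\mathrm{diag}(q_i)$ gives a clean pointwise identity $\mu=\tilde\mu/\sqrt{\tilde\mu^2+1}$ (which passes to essential infima since $t\mapsto t/\sqrt{t^2+1}$ is increasing), and your algebra then yields $p_0=2+2\tilde\mu(\tilde\mu-\sqrt{\tilde\mu^2+1})$, i.e.\ \eqref{PPcond} with the evident typo $\mu\to\tilde\mu$ corrected. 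What your route buys is self-containedness and a direct proof of the strict, uniform statement ($p$-ellipticity) rather than extracting it from results phrased for $L^p$-dissipativity; the price is that you must supply the uniformity yourself, which is done correctly provided you state explicitly that one direction uses $|\langle A\xi,\bar\xi\rangle|\le\Lambda|\xi|^2$ and the other $\mathscr{R}e\langle A\xi,\xi\rangle\ge\lambda|\xi|^2$ (pointwise strict inequality of the ratio alone does not give a uniform gap). Also note that the ``main obstacle'' you flag at the end is already closed by your own display: in the normal form $\mathscr{R}e\langle A\xi,\xi\rangle=|\xi|^2$ exactly, so minimizing the ratio is maximizing $|\sum_i(1+iq_i)\xi_i^2|$, and the inequality $|\sum_i(1+iq_i)\xi_i^2|\le\max_i\sqrt{1+q_i^2}\,|\xi|^2$ holds for all complex $\xi$ (since $|\xi_i^2|=|\xi_i|^2$) with equality at $e_k$; nothing further needs checking.

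Two repairs are needed. First, with the identification $\lambda=\alpha$, $\eta=\beta$ your expansion of $2\,\mathscr{R}e\langle A\xi,\cJ_p\xi\rangle$ produces the form with weights $2/p$ and $2/p'$ on the two real-part terms (the analogue of \eqref{eq2.27}), not literally the left-hand side of \eqref{DDDcond}; to land on \eqref{DDDcond} one rescales anisotropically, e.g.\ $\alpha=\sqrt{p}\,\lambda$, $\beta=\sqrt{p'}\,\eta$. This is harmless (the rescaling is a bijection and only changes $\varepsilon(p)$ by a $p$-dependent factor), but as written the identification is not exact. Second, and this is the one genuine gap: your justification of ``$p_0=1$ iff $\mathscr{I}m\,A=0$'' via ``triangle-inequality extremizers'' only works for the symmetric part of $\mathscr{I}m\,A$. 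On real vectors $\xi$ one has $\langle A\xi,\bar\xi\rangle=\langle A\xi,\xi\rangle$, and $\mathscr{R}e\,z\ge|z|$ forces $\mathscr{I}m\,z=\xi^t(\mathscr{I}m\,A)\xi=0$, which kills only the symmetric part; an antisymmetric $\mathscr{I}m\,A$ is invisible to both numerator and denominator on real vectors, so real extremizers cannot detect it. To exclude it, perturb a real vector: for $\xi=v+i\varepsilon w$ the numerator is $\langle Sv,v\rangle+c\varepsilon+O(\varepsilon^2)$ where $S$ is the symmetric part of $\mathscr{R}e\,A$ and $c$ is a nonzero multiple of $v^t(\mathscr{I}m\,A)w$ when the antisymmetric part is nonzero, while $|\langle A\xi,\bar\xi\rangle|=\langle Sv,v\rangle+O(\varepsilon^2)$; choosing the sign of $\varepsilon$ drives the ratio strictly below $1$, so $\mu(A(x))=1$ indeed forces $\mathscr{I}m\,A(x)=0$. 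With that supplement (and noting that $\mu(A)=1$ means $\mu(A(x))=1$ for a.e.\ $x$), the parenthetical claim and the rest of your argument are complete.
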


We apply the concept of $p$-ellipticity to prove the following result.

\begin{theorem}\label{MTdiss} Assume that the matrix $A$ is $p$-elliptic. Then there exists $\lambda'_p=\lambda'_p(\Lambda,\lambda_p)>0$ such that for any nonnegative, bounded and measurable function $\chi$ and any
 $u$ such that $|u|^{(p-2)/2} u \in W^{1,2}_{loc}(\Omega;\BBC)$, we have
\begin{equation}\label{SD}
\mathscr{R}e\,\int_{\Omega}\langle A(x)\nabla u,\nabla(|u|^{p-2}u)\rangle\chi(x)\, dx\ge \lambda'_p \int_{\Omega}|u|^{p-2}|\nabla u|^2\chi(x)\,dx.
\end{equation}
\end{theorem}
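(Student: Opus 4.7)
The plan is to reduce the integrated inequality to a pointwise bound (valid since $\chi \ge 0$ is arbitrary and otherwise unrestricted) and then to decompose the computation using polar coordinates for the complex-valued $u$.

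First, I would argue that on the set $\{u = 0\}$ both sides of the claimed pointwise inequality vanish a.e. (using the standard fact that $\nabla u = 0$ a.e. on $\{u=0\}$ for $u \in W^{1,2}_{loc}$), so we may restrict attention to the open set where $u \ne 0$. On this set, write $u = |u|e^{i\theta}$ locally and set
$$ a := \nabla|u|, \qquad b := |u|\nabla\theta, \qquad \xi := a + ib \in \BBC^n. $$
A direct chain-rule computation gives
$$ \nabla u = e^{i\theta}\xi, \qquad \nabla(|u|^{p-2}u) = e^{i\theta}|u|^{p-2}\eta, \qquad \eta := (p-1)a + ib. $$
Substituting into the integrand and cancelling the phase factor $e^{i\theta}\overline{e^{i\theta}} = 1$ yields the pointwise identity
$$ \mathscr{R}e\,\langle A\nabla u, \nabla(|u|^{p-2}u)\rangle = |u|^{p-2}\,\mathscr{R}e\,\langle A\xi, \eta\rangle. $$

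Next, I would observe the algebraic identity $\eta = p\,\cJ_{p'}\xi$: since $\cJ_{p'}(a+ib) = a/p' + ib/p$ and $p/p' = p - 1$, one has $p\cJ_{p'}(a+ib) = (p-1)a + ib$. Moreover, the condition $|1 - 2/p| < \mu(A)$ is invariant under $p \mapsto p'$ (as $|1-2/p'| = |(2-p)/p| = |1-2/p|$), so $p$-ellipticity is equivalent to $p'$-ellipticity. Hence
$$ \mathscr{R}e\,\langle A\xi, \cJ_{p'}\xi\rangle \ge \lambda_{p'}|\xi|^2 $$
for some $\lambda_{p'} = \lambda_{p'}(\lambda_p, \Lambda) > 0$. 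Since $|\xi|^2 = |a|^2 + |b|^2 = |\nabla u|^2$, combining these facts yields
$$ \mathscr{R}e\,\langle A\nabla u, \nabla(|u|^{p-2}u)\rangle \ge p\lambda_{p'}\,|u|^{p-2}|\nabla u|^2, $$
so $\lambda'_p := p\lambda_{p'}$ works. As a cross-check, one can instead apply \eqref{DDDcond} directly with the substitution $\lambda = \sqrt{p-1}\,a$, $\eta = b$: writing $A = \mathscr{R}e\,A + i\,\mathscr{I}m\,A$ and expanding $\mathscr{R}e\,\langle A\xi,(p-1)a+ib\rangle$ in these real variables reproduces precisely the left-hand side of \eqref{DDDcond}, which then yields the same positive lower bound (with a possibly different explicit constant).

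The main obstacle is justifying the chain rule for $\nabla(|u|^{p-2}u)$ in the regime $1 < p < 2$, where $|u|^{p-2}$ is singular at the zero set of $u$. I would handle this by approximation: replace $|u|^{p-2}u$ by $F_\varepsilon(u) := (|u|^2 + \varepsilon^2)^{(p-2)/2}u$, which is smooth in $u$, run the same polar-coordinate computation (with $|u|$ replaced by $\sqrt{|u|^2 + \varepsilon^2}$ at the appropriate places), obtain the analogous pointwise bound with an $\varepsilon$-regularized right-hand side, and then send $\varepsilon \to 0$. The hypothesis $|u|^{(p-2)/2}u \in W^{1,2}_{loc}(\Omega;\BBC)$ guarantees that $|u|^{p-2}|\nabla u|^2 \in L^1_{loc}$ (since $|\nabla(|u|^{(p-2)/2}u)|^2 \gtrsim |u|^{p-2}|\nabla u|^2$ after a similar polar-coordinate computation), and this furnishes the dominating function needed to pass to the limit. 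Once the pointwise bound is established, integrating against $\chi$ produces \eqref{SD}.
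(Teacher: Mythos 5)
Your argument is correct, but it follows a different route from the paper's. You reduce everything to a pointwise algebraic inequality: writing $\xi=|u|^{-1}\overline{u}\nabla u=X+iY$ you observe $\nabla u=\tfrac{u}{|u|}\xi$, $\nabla(|u|^{p-2}u)=\tfrac{u}{|u|}|u|^{p-2}\bigl((p-1)X+iY\bigr)$, and that $(p-1)X+iY=p\,\cJ_{p'}\xi$, so the integrand is exactly $p\,|u|^{p-2}\,\mathscr{R}e\,\langle A\xi,\cJ_{p'}\xi\rangle$; coercivity then comes from $p'$-ellipticity (equivalent to $p$-ellipticity since the admissible range $(p_0,p_0')$ is symmetric under $p\mapsto p'$), and indeed your cross-check via \eqref{DDDcond} with $\lambda=\sqrt{p-1}\,X$, $\eta=Y$ shows the paper's form applies directly, so no passage to $p'$ is even needed. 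The paper instead works with $v=|u|^{p/2-1}u$, derives the integrated Cialdea--Maz'ya-type inequality in the variables $X,Y$ attached to $v$, regularizes via $g_\varepsilon=(|v|^2+\varepsilon^2)^{1/2}$, $u_\varepsilon=g_\varepsilon^{2/p-1}v$, and for $1<p<2$ invokes a duality step ($w=|u|^{p-2}u$ with $A^*$ being $p'$-elliptic). Your version is more direct: it yields a pointwise bound (stronger than the integrated one, and it makes integration against any $\chi\ge 0$ immediate, even when both sides are infinite), and it treats $p<2$ and $p\ge 2$ uniformly, dispensing with the duality argument. One step you should spell out: with $F_\varepsilon(u)=(|u|^2+\varepsilon^2)^{(p-2)/2}u$ the chain rule does not simply replace $|u|$ by $g_\varepsilon$ in $(p-1)X+iY$; rather the coefficient of $X$ becomes $1+(p-2)|u|^2/(|u|^2+\varepsilon^2)$, which interpolates between $1$ and $p-1$. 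The needed uniform-in-$\varepsilon$ coercivity then follows because the resulting quadratic form is an affine (convex) combination of the $2$-elliptic form and the $p$-elliptic form, both of which are coercive with constants controlled by $\lambda_p$ and $\Lambda$; this is a short observation, but without it the ``analogous pointwise bound'' for the regularized quantity is not literally the same computation. With that detail added (and the routine remark that $\nabla u=0$ a.e.\ on $\{u=0\}$ kills both sides there, while $|u|^{p-2}|\nabla u|^2\in L^1_{loc}$, guaranteed by the hypothesis and Lemma \ref{l2.8}, dominates the limit $\varepsilon\to 0$), your proof is complete and quantitatively consistent with the stated dependence $\lambda_p'=\lambda_p'(\Lambda,\lambda_p)$.
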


\begin{proof} Since $A$ is $p$-elliptic \eqref{DDDcond} holds. Changing the variables $\lambda=\frac2{\sqrt{pp'}}\xi$
and obtain
\begin{align}
\frac4{pp'}\langle \mathscr{R}e\,A\,\xi,\xi\rangle&+\langle \mathscr{R}e\,A\,\eta,\eta\rangle+
\nonumber\\&+2\langle p^{-1}
\mathscr{I}m\,A-{p'}^{-1}\mathscr{I}m\,A^t)\xi,\eta\rangle\ge \varepsilon'(|\xi|^2+|\eta|^2).\label{eq2.27}
\end{align}

Consider now $v\in W^{1,2}_{loc}(\Omega;\BBC)$ and write $|v|^{-1}\overline{v}\nabla v$ as $X+iY$, that is 
$$X=\mathscr{R}e(|v|^{-1}\overline{v}\nabla v)\quad\mbox{and}\quad Y=\mathscr{I}m(|v|^{-1}\overline{v}\nabla v).$$

Let $\chi(x)$ be a nonnegative bounded an measurable function on $\Omega$.
By using $X$ in place of $\xi$ and $Y$ in place of $\eta$, multiplying by $\chi(x)$ and integrating over $\Omega$ one obtains from \eqref{eq2.27} (similar to \cite[Corollary 4]{CM2})
\begin{eqnarray}
&&\mathscr{R}e\, \int_{\Omega}\Big[\langle A\nabla v,\nabla v\rangle-(1-2/p)\langle(A-A^*)\nabla(|v|),|v|^{-1}\overline{v}\nabla v\rangle\nonumber\\
&&\qquad-(1-2/p)^2\langle A\nabla(|v|),\nabla(|v|) \rangle\Big]\chi(x)\, dx\ge \varepsilon'\int_{\Omega}|\nabla v|^2\chi(x)\,dx
\end{eqnarray}
Now as in (2.9) of \cite{CM2} one considers $v=|u|^{p/2-1}u$ and
$$g_{\varepsilon}=(|v|^2+\varepsilon^2)^{1/2},\qquad u_{\varepsilon}=g_{\varepsilon}^{2/p-1}v,$$
for $u,\,u_\varepsilon$. Using our assumption we see that $|u|^{(p-2)/2}u,\, |u_\varepsilon|^{(p-2)/2}u_\varepsilon\in W^{1,2}_{loc}(\Omega,{\mathbb C}^n)$. We let $\varepsilon\to 0+$. When $p>2$, we obtain, using Lebesgue dominated convergence,
\begin{eqnarray}\label{Sdissip}&&\lim_{\varepsilon\to 0+}\,\mathscr{R}e\,\int_{\Omega}\langle A\nabla u_{\varepsilon},\nabla(|u_\varepsilon|^{p-2}u_\varepsilon)\rangle\chi(x)\, dx=\\
&&\mathscr{R}e\, \int_{\Omega}\Big[\langle A\nabla v,\nabla v\rangle-(1-2/p)\langle(A-A^*)\nabla(|v|),|v|^{-1}\overline{v}\nabla v\rangle\nonumber\\
&&\quad-(1-2/p)^2\langle A\nabla(|v|),\nabla(|v|) \rangle\Big]\chi(x)\, dx\ge \varepsilon'\int_{\Omega}|\nabla (|u|^{p/2-1}u)|^2\chi(x)\,dx.\nonumber
\end{eqnarray}
and then the first term of \eqref{Sdissip} is just 
\begin{equation}
\mathscr{R}e\,\int_{\Omega}\langle A(x)\nabla u,\nabla(|u|^{p-2}u)\rangle\chi(x)\, dx.\label{eq2.31}
\end{equation}

When $1<p<2$, we use a duality argument based on an observation in \cite{CM2}. Set $w=|u|^{p-2}u$, so that $u = |w|^{p'-2}w$. The fact that $|u|^{(p-2)/2} u \in W^{1,2}_{loc}(\Omega;\BBC)$ implies that
$|w|^{(p'-2)/2} w \in W^{1,2}_{loc}(\Omega;\BBC)$. 
Then,
$$\mathscr{R}e\,\int_{\Omega}\langle A(x)\nabla u,\nabla(|u|^{p-2}u)\rangle\chi(x)\, dx =
\mathscr{R}e\,\int_{\Omega}\langle A^*(x)\nabla w,\nabla(|w|^{p'-2}w)\rangle\chi(x)\, dx,
$$
and we have that $A^*$ is $p'$-elliptic when $A$ is $p$-elliptic.
Therefore, we have reduced the regime of the first case considered, and the same limiting argument yields  
\begin{equation}
\mathscr{R}e\,\int_{\Omega}\langle A^*(x)\nabla w,\nabla(|w|^{p'-2}w)\rangle\chi(x)\, dx\ge \lambda'_{p'} \int_{\Omega}|w|^{p'-2}|\nabla w|^2\chi(x)\,dx.
\end{equation}

We conclude the argument for \eqref{SD} in the case $p<2$ by observing that
$$\int_{\Omega}|w|^{p'-2}|\nabla w|^2\chi(x)\,dx \approx \int_{\Omega}|\nabla (|w|^{p'/2-1}w)|^2 \chi(x)\,dx = \int_{\Omega}|\nabla (|u|^{p/2-1}u)|^2 \chi(x)\,dx.$$
\end{proof}

The following observation will be used frequently.

\begin{lemma}\label{l2.8} For all $p>1$, and for all $x$ for which  $u(x) \ne 0$
$$|\nabla (|u(x)|^{p/2-1}u(x))|^2\approx |u(x)|^{p-2}|\nabla u(x)|^2.$$
\end{lemma}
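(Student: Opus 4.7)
The plan is to compute $\nabla(|u|^{p/2-1}u)$ explicitly at points where $u\ne 0$ and compare it term-by-term with $|u|^{p-2}|\nabla u|^2$. Since both sides are pointwise quantities, this is purely an algebraic identity in the gradients.

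First I would set $v=|u|^{p/2-1}u$, and apply the product rule. Using that $|u|=\sqrt{u\overline{u}}$ is differentiable away from the zero set of $u$, with
$$\nabla|u|=\frac{1}{|u|}\mathscr{R}e(\overline{u}\nabla u),$$
the product rule gives, componentwise,
$$\partial_i v=(p/2-1)|u|^{p/2-2}\bigl[|u|^{-1}\mathscr{R}e(\overline{u}\,\partial_i u)\bigr]u+|u|^{p/2-1}\partial_i u.$$
The key step is then to introduce the natural decomposition (already used in the proof of Theorem~\ref{MTdiss}): write
$$|u|^{-1}\overline{u}\,\partial_i u=X_i+iY_i,\qquad X_i=\mathscr{R}e(|u|^{-1}\overline{u}\,\partial_i u),\quad Y_i=\mathscr{I}m(|u|^{-1}\overline{u}\,\partial_i u).$$
Since $|u/|u||=1$, we immediately get $|\partial_i u|^2=X_i^2+Y_i^2$, so that $|u|^{p-2}|\nabla u|^2=|u|^{p-2}\sum_i(X_i^2+Y_i^2)$.

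Substituting $\partial_i u=(u/|u|)(X_i+iY_i)$ into the expression for $\partial_i v$ and collecting coefficients of $u$ yields the clean identity
$$\partial_i v=|u|^{p/2-2}u\bigl[(p/2)X_i+iY_i\bigr],$$
whence, using $|u|=|u|$,
$$|\partial_i v|^2=|u|^{p-2}\bigl[(p/2)^2X_i^2+Y_i^2\bigr].$$
Summing in $i$ gives $|\nabla v|^2=|u|^{p-2}\sum_i((p/2)^2X_i^2+Y_i^2)$, and comparing with the expression for $|u|^{p-2}|\nabla u|^2$ displayed above shows that the two quantities are bounded above and below by positive multiples of each other, with constants $\min(1,(p/2)^2)$ and $\max(1,(p/2)^2)$ depending only on $p$. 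This proves the claimed equivalence.

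There is no real obstacle here; the lemma is purely algebraic once one performs the $X_i$--$Y_i$ decomposition, and the only point requiring care is justifying the product/chain rule application at points where $u\ne 0$, which is standard for $W^{1,2}_{loc}$ functions since the nonlinearity $s\mapsto |s|^{p/2-1}s$ is $C^1$ away from the origin.
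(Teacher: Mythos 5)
Your proof is correct and follows essentially the same route as the paper: both differentiate $|u|^{p/2-1}u$ directly via the product rule together with the identity $\partial_k|u|=|u|^{-1}\mathscr{R}e\,\langle u,\partial_k u\rangle$. The only difference is bookkeeping: your decomposition $|u|^{-1}\overline{u}\,\partial_k u=X_k+iY_k$ yields the exact pointwise identity $|\partial_k(|u|^{p/2-1}u)|^2=|u|^{p-2}\bigl[(p/2)^2X_k^2+Y_k^2\bigr]$, giving the two-sided comparison with constants $\min\{1,(p/2)^2\}$ and $\max\{1,(p/2)^2\}$ termwise, whereas the paper expands $\bigl|\mathscr{R}e\,\langle u,\partial_k u\rangle\bigr|^2$ and invokes Cauchy--Schwarz for the upper bound.
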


\begin{proof} For $k=0,1,\dots,n-1$ we have
$$\partial_k(|u|^{(p-2)/2}u)=|u|^{(p-4)/2}\left[|u|\partial_ku+\textstyle\frac{p-2}2u\partial_k(|u|)\right].$$
Multiplying by its complex conjugate then yields
$$\left|\partial_k(|u|^{(p-2)/2}u)\right|^2=|u|^{p-4}\Big[|u|^2|\partial_k u|^2+(p-2)|u|\partial_k(|u|)\mathscr{R}e\,\langle u,\partial_k u\rangle$$
$$+\left(\textstyle\frac{p-2}{2}\right)^2|u|^2(\partial_k(|u|))^2\Big].$$
As in the proof of \cite[Lemma 2]{CM1} we have  $\partial_k(|u|)=|u|^{-1}\mathscr{R}e\,\langle u,\partial_k u\rangle$ which yieds
$$\left|\partial_k(|u|^{(p-2)/2}u)\right|^2=|u|^{p-4}\Big[|u|^2|\partial_k u|^2+\left[(p-2)+\left(\textstyle\frac{p-2}{2}\right)^2\right]\left|\mathscr{R}e\,\langle u,\partial_k u\rangle\right|^2\Big].$$
Summing over all $k$ gives us
$$\left|\nabla(|u|^{(p-2)/2}u)\right|^2=|u|^{p-4}\Big[|u|^2|\nabla u|^2+\left[\left(\textstyle\frac{p}{2}\right)^2-1\right]\displaystyle\sum_k\left|\mathscr{R}e\,\langle u,\partial_k u\rangle\right|^2\Big].$$
Since by Cauchy-Schwarz
$$\sum_k\left|\mathscr{R}e\,\langle u,\partial_k u\rangle\right|^2\le |u|^2|\nabla u|^2,$$
then clearly
$$\left|\nabla(|u|^{(p-2)/2}u)\right|^2\le \left(1+\left|\left(\textstyle\frac{p}{2}\right)^2-1\right|\right)|u|^{p-2}|\nabla u|^2,$$
and for $p>0$
$$\min\left\{1,\left(\textstyle\frac{p}2\right)^2\right\}|u|^{p-2}|\nabla u|^2\le \left|\nabla(|u|^{(p-2)/2}u)\right|^2.$$
Hence the claim holds.
\end{proof}

We now turn to the main lemmas required to prove Theorem \ref{Regularity}.

\begin{lemma}\label{LpAvebig}
Let the matrix $A$ be $p$-elliptic for $p \geq 2$ and let $B$ have coefficients satisfying condition \eqref{Bcond} of Theorem \ref{Regularity}.
Suppose that $u$ is a $W^{1,2}_{loc}(\Omega;\BBC)$ solution to $\mathcal L$ in $\Omega$. Then, for any ball $B_r(x)$ with $r < \delta(x)/4$,
\begin{equation}\label{psq}
\int_{B_r(x)} |\nabla u(y)|^2 |u(y)|^{p-2} dy \lesssim r^{-2} \int_{B_{2r}(x))} |u(y)|^p dy
\end{equation}
and
\begin{equation}\label{pAve}
\left(\dint_{B_{r}(x))} |u(y)|^q dy\right)^{1/q}
\lesssim  \left(\dint_{B_{2r}(x)} |u(y)|^2 dy\right)^{1/2}
\end{equation}
for all $q \in (2,\frac{np}{n-2}]$ when $n > 2$, and
where the implied constants depend only $p$-ellipticity and $K$ of \eqref{Bcond}.
When $n=2$, $q$ can be any number in $(2,\infty)$.
In particular, $|u|^{(p-2)/2} u$ belongs to $W^{1,2}_{loc}(\Omega;\BBC).$
\end{lemma}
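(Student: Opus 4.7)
The plan is Moser-style iteration, using Theorem \ref{MTdiss} in place of the coercivity available in the real case. For the Caccioppoli estimate \eqref{psq}, I take a cutoff $\phi\in C_c^\infty(B_{2r}(x))$ with $\phi\equiv 1$ on $B_r(x)$ and $|\nabla\phi|\lesssim 1/r$, and test the weak formulation of $\mathcal Lu=0$ against $\phi^2|u|^{p-2}u$. Theorem \ref{MTdiss} with $\chi=\phi^2$ bounds the main term below by $\lambda_p'\int|u|^{p-2}|\nabla u|^2\phi^2$. The two error terms---the cross term $2\int\langle A\nabla u,\nabla\phi\rangle\,\phi|u|^{p-2}\bar u$, handled by the upper ellipticity bound and Young's inequality, and the drift term $\int(B\cdot\nabla u)\phi^2|u|^{p-2}\bar u$, handled via \eqref{Bcond} together with $r<\delta(x)/4$ (so $|B|\lesssim 1/r$ on $B_{2r}(x)$) and Young's inequality---are absorbed, modulo a remainder of order $r^{-2}\int_{B_{2r}}|u|^p$, into a small fraction of the main term on the left. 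This gives \eqref{psq}.

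For \eqref{pAve}, set $w:=|u|^{(p-2)/2}u$. Lemma \ref{l2.8} gives $|\nabla w|^2\approx|u|^{p-2}|\nabla u|^2$, so \eqref{psq} reads $\int_{B_r}|\nabla w|^2\lesssim r^{-2}\int_{B_{2r}}|u|^p$, which already proves $w\in W^{1,2}_{loc}$ as asserted. Sobolev embedding applied to $\phi w$ (with a slight adjustment of cutoff and radius), combined with $|w|^2=|u|^p$ and $|w|^{2n/(n-2)}=|u|^{pn/(n-2)}$, yields the one-step reverse-H\"older bound
\[
\Bigl(\dint_{B_r}|u|^{pn/(n-2)}\Bigr)^{(n-2)/(np)}\lesssim\Bigl(\dint_{B_{3r/2}}|u|^p\Bigr)^{1/p}.
\]
To reach \eqref{pAve} with $L^2$ on the right-hand side, I bootstrap: since $p$-ellipticity for $p\ge 2$ implies $s$-ellipticity for every $s\in[2,p]$ (because the condition $|1-2/s|<\mu(A)$ is monotone in $s$ on $[2,\infty)$), the Caccioppoli--Sobolev step above goes through with any such intermediate exponent $s$. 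Iterating $s_{k+1}=\min(p,\tfrac n{n-2}s_k)$ starting from $s_0=2$ reaches $s_K=p$ in finitely many steps; one further step lands in $L^{pn/(n-2)}$, and H\"older interpolation recovers every $q\in(2,pn/(n-2))$.

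The main technical obstacle is that Theorem \ref{MTdiss} requires $|u|^{(p-2)/2}u\in W^{1,2}_{loc}$, which is itself one of the conclusions we wish to prove---a potential circularity. I resolve it by truncating $|u|$ in the test function at a height $M$ (so that $\phi^2|u|_M^{p-2}u\in W^{1,2}\cap L^\infty$ is an admissible test function) and deriving a Caccioppoli bound uniform in $M$ by a direct computation that mirrors the proof of Theorem \ref{MTdiss}; sending $M\to\infty$ by monotone convergence produces the untruncated estimate. The bootstrap is then self-starting: at stage $k$ the $L^{s_k}_{loc}$ integrability obtained at the previous stage legitimizes the choice of test function for the next, so the chain of estimates closes without needing to assume anything beyond $u\in W^{1,2}_{loc}$ at the outset.
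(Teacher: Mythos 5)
Your core estimates follow the same path as the paper: a Caccioppoli inequality obtained by testing with a $|u|^{p-2}u$-type function and using the dissipativity lower bound of Theorem \ref{MTdiss}, absorption of the drift term via \eqref{Bcond} and $\delta\gtrsim r$ on $B_{2r}(x)$, then Sobolev embedding applied to $|u|^{(p-2)/2}u$ via Lemma \ref{l2.8}, and an iteration through the intermediate exponents $2=s_0<s_1<\dots$ (legitimate, as you note, because $|1-2/s|\le|1-2/p|<\mu(A)$ for $2\le s\le p$; this is exactly the paper's condition ``$p_{k-1}<p$'' in \eqref{RHk}). Where you genuinely diverge is in handling the qualitative obstruction that Theorem \ref{MTdiss} presupposes $|u|^{(p-2)/2}u\in W^{1,2}_{loc}$, which is part of the conclusion. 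The paper resolves this by first assuming $A,B$ smooth, so that $u$ is smooth and \eqref{SD} applies outright, proving \eqref{psq}--\eqref{pAve} with constants independent of smoothness, and then approximating rough coefficients by smooth ones and invoking the convergence arguments of \cite{KP2} (strong $W^{1,2}_{loc}$ convergence of $u_j$ to $u$, weak compactness of $w_j=|u_j|^{p/2-1}u_j$) to transfer the bounds to $u$. You instead truncate the test function at height $M$ (so $\phi^2|u|_M^{p-2}u$ is admissible for any $W^{1,2}_{loc}$ solution), prove a uniform-in-$M$ Caccioppoli bound, and let $M\to\infty$ by monotone convergence, with the bootstrap supplying at each stage the $L^{s_k}_{loc}$ integrability needed to make the right-hand side finite. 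This is a sound and in fact more self-contained route: it avoids approximating the operator and the attendant convergence-of-solutions argument, and it is in the spirit of the paper's own treatment of the $p<2$ case (the $\rho_\delta$ truncation in Lemma \ref{LpAve}, there at small $|u|$ rather than large). The price is that you must actually carry out the ``truncated Theorem \ref{MTdiss}'': on $\{|u|\ge M\}$ ordinary ellipticity gives the nonnegative term $M^{p-2}\mathscr{R}e\langle A\nabla u,\nabla u\rangle$, while on $\{0<|u|<M\}$ you need the pointwise Cialdea--Maz'ya computation (with $X=\mathscr{R}e(|u|^{-1}\overline{u}\nabla u)$, $Y=\mathscr{I}m(|u|^{-1}\overline{u}\nabla u)$ and the chain rule for Lipschitz truncations of $W^{1,2}$ functions) rather than the integrated statement of Theorem \ref{MTdiss}; these details are standard but should be written out, since the theorem as stated cannot simply be cited. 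Minor points: the endpoint $q=np/(n-2)$ comes from the final Sobolev step itself (your interpolation remark only covers the open range), and the $n=2$ case should be mentioned explicitly (Sobolev embedding into every finite $L^q$).
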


\begin{proof}   

We begin by assuming that both $A$ and $B$ are smooth. Thus the solution $u$ to ${\mathcal L}(u) = 0$ will be smooth in the interior of $\Omega$ (\cite{Tay}) and we have \eqref{SD}
at our disposal for any $p$ such that $A$ is $p$-elliptic.  We prove \eqref{psq} first, and then \eqref{pAve} by a bootstrap argument so that no constants appearing depend on the smoothness of the coefficients of
$A$, $B$. 

Let $B_r(x)$ be ball in the interior of $\Omega$ with $r<\delta(x)/4$. Let $\varphi$ be a smooth cutoff function, with $\varphi = 1$ on $B_r(x)$ and vanishing outside of
$B_{2r}(x)$. Set $v = u\varphi$.
Then,
\begin{equation}\label{Lv}
{\mathcal L}v = u\mathcal L \varphi + A\nabla u\cdot \nabla \varphi + A^*\nabla u\cdot \nabla \varphi.
\end{equation}
Multiply both sides of \eqref{Lv} by $|v|^{p-2}\overline{v}$ and integrate by parts to obtain

\begin{align}\label{Lv1}
\int \nabla(|v|^{p-2}\overline{v}) \cdot A \nabla v \,dy  &=  \int (|v|^{p-2}\overline{v}) B\cdot \nabla v\,dy + \int \nabla(|v|^{p-2}\overline{v}u ) \cdot A\nabla \varphi \, dy \nonumber\\
& -\int |v|^{p-2}\overline{v} u \, B\cdot \nabla \varphi dy
- \int |v|^{p-2}\overline{v} A\nabla u\cdot \nabla \varphi \, dy  \nonumber\\
& - \int |v|^{p-2}\overline{v} A^*\nabla u\cdot \nabla \varphi \, dy 
 \nonumber\\
\end{align}
The real part of the left hand side of \eqref{Lv1} is bounded from below by $\lambda_p \int |v|^{p-2} |\nabla v|^2 \,dy$ by $p$-ellipticity and \eqref{SD}.  
The first of the five terms on the right hand side above has the bound
\begin{equation}\label{Lv3}
\left|\int (|v|^{p-2}\overline{v}) \cdot B\nabla v\,dy\right| \lesssim K r^{-1} \left( \int |v|^{p-2} |\nabla v|^2 \,dy\right)^{1/2} \left( \int |v|^p \,dy\right)^{1/2}
\end{equation}
where $K$ is as in \eqref{Bcond}.
The third term has the bound
\begin{equation}\label{Lv2}
\left|\int |v|^{p-2}\overline{v} u \, B\cdot \nabla \varphi dy\right| \lesssim  K \,r^{-2} \int_{B_{2r}(x)} |u|^p \,dy.
\end{equation}
In terms four and five, we use the fact that $\overline{v} \nabla u = \overline{u} \nabla v - |u|^2 \nabla \varphi$ to bound each of these integrals by a constant multiple of
\begin{equation}\label{gradu}
 \int |v|^{p-2} |u| |\nabla \varphi||\nabla v| + |v|^{p-2} |u|^2 |\nabla \varphi |^2 \,dy
\end{equation}
with constant depending on the ellipticity parameter $\Lambda$.
By Cauchy-Schwarz, this integral is bounded by terms like those in \eqref{Lv3} and \eqref{Lv2}.
After distributing the gradient, the second term has the following bound
\begin{equation}
\int |\nabla(|v|^{p-2}\overline{v})| |u| |\nabla \varphi| + |v|^{p-2} |\overline{v} \nabla u| |\nabla \varphi| \,dy
\end{equation}
Using Lemma \ref{l2.8} to compute the derivative in the first expression, the first term in the above integral is bounded by
\begin{equation}\label{4.15}
\int |v|^{p-2}| |\nabla v| |u| |\nabla \varphi| dy
\end{equation}
and the second expression, using the same trick as in \eqref{gradu} is bounded by 
\begin{equation}\label{4.16}
\int |v|^{p-2} |\nabla v| |u| |\nabla \varphi| + |v|^{p-2} |u|^2 |\nabla \varphi|^2 dy 
\end{equation}
Taking the real part of both sides of \eqref{Lv1} and combining all these estimates, and noting that the constants do not depend on the smoothness of the $A$ or $B$ gives \eqref{psq} for this $u$.
Observe, in addition, that the Sobolev embedding gives
\begin{equation}\label{emb}
\left(\dint_{B_{r}(x)} |u|^{\tilde p} \,dy\right)^{1/{\tilde p}} \lesssim  \left(\dint_{B_{2r}(x)} |v|^{\tilde p} \,dy\right)^{1/{\tilde p}} \lesssim  \left(r^2\dint_{B_{2r}(x)} |\nabla(|v|^{p/2-1} v)|^2 \,dy \right)^{1/p}
\end{equation}
where ${\tilde p} = \frac{pn}{n-2}.$

First we use \eqref{emb} and the argument for \eqref{psq} (see \eqref{Lv1} and the subsequent estimates) to obtain a reverse H\"older inequality for $u$. That is,
\begin{equation}\label{RH}
\left(\dint_{B_{r}(x)} |u|^{\tilde p} \,dy\right)^{1/{\tilde p}}
\lesssim
\left(\dint_{B_{\alpha r}(x)} |u|^{p} \,dy\right)^{1/{p}}
\end{equation}
Note that at this moment we have only proven \eqref{RH} for $\alpha = 2$; however, by adjusting the cutoff function $\varphi$, the entire argument can be done with
balls $B_{\alpha r}(x)$, for any $\alpha > 1$, and with a new constant which will depend on $\alpha$.
Iterating \eqref{RH} $k$ times gives
\begin{equation}\label{RHk}
\left(\dint_{B_{r}(x)} |u|^{p_k} \,dy\right)^{1/{p_k}}
\lesssim
\left(\dint_{B_{\alpha^k r}(x)} |u|^{2} \,dy\right)^{1/2}
\end{equation}
for $p_k = 2(\frac{n}{n-2})^k$, as long as $p_{k-1} < p$.

It remains to remove the assumption of smoothness of the coefficients of the operator, noting that the constants appearing in \eqref{psq} and \eqref{pAve} depend only on 
the $p$-ellipticity and
$K$ and not on any smoothness parameter.
Suppose  that $A$ satisfies the condition of $p$-ellipticity and $B$ satisfies \eqref{Bcond}, and let
$A_j$ and $B_j$ be smooth approximations, converging a.e. to $A$, $B$, respectively.  Let $u_j$ be the solution to $\mathcal L_ju_j = 0$ with
the same boundary data as $u$. We claim that the arguments of section 7 of \cite{KP2}  can be used to show that $u_j \rightarrow u$  strongly in $W^{1,2}$ on compact subsets of $\Omega$.  We note that the arguments of \cite{KP2} did not consider the convergence of  lower order terms, but this will follow in the same way from the strong convergence of
of $u_j$ to $u$ in $L^q$ for $q = 2n/(n-2)$. From the fact that \eqref{pAve} holds for $u_j$, and the strong 
convergence of $u_j$ to $u$ in $L^2(B_{2r})$, we see that the $L^q$ averages of $u_j$ are uniformly bounded. Passing to a subsequence if necessary, we have
weak convergence of the $u_j$ in $L^q$, and that weak limit must be $u$. This implies that the $L^q$ average of $u$ is bounded as well, i.e., we have
 \eqref{pAve} for $u$.
Now set $w_j =  |u_j(x)|^{p/2-1}u_j(x)$ and $w = |u(x)|^{p/2-1}u(x)$. From \eqref{psq} for $u_j$ and \eqref{pAve} for $u$, we have that $w_j$ is uniformly bounded in $W^{1,2}_\text{loc}(\Omega;\BBC).$  This gives weak convergence of $w_j$ to a limit, which again must be $w$. The weak convergence gives the uniform bound on $|\nabla w|^2$, and thus \eqref{psq} for $u$.

\end{proof}

\begin{lemma}\label{LpAve}
Let the matrix $A$ be $p$-elliptic for $p < 2$ and let $B$ have coefficients satisfying condition \eqref{Bcond} of Theorem \ref{Regularity}.
Suppose that $u$ is a $W^{1,2}_{loc}(\Omega;\BBC)$ solution to $\mathcal L$ in $\Omega$. Then, for any ball $B_r(x)$ with $r < \delta(x)/4$ and any $\varepsilon>0$
\begin{equation}\label{psq2}
r^2\dint_{B_r(x)} |\nabla u(y)|^2 |u(y)|^{p-2} dy \le C_\varepsilon \dint_{B_{2r}(x)} |u(y)|^p dy+\varepsilon\left( \dint_{B_{2r}(x)} |u(y)|^2 dy\right)^{p/2}
\end{equation}
and
\begin{equation}\label{pAve2}
\left(\dint_{B_{r}(x)} |u(y)|^2 dy\right)^{1/2}
\le  C_\varepsilon\left(\dint_{B_{2r}(x)} |u(y)|^p dy\right)^{1/p}+\varepsilon \left(\dint_{B_{2r}(x)} |u(y)|^2 dy\right)^{1/2}
\end{equation}
where the constants depend only $p$-ellipticity, $K$ of \eqref{Bcond} and $\varepsilon>0$.
In particular, $|u|^{(p-2)/2} u$ belongs to $W^{1,2}_{loc}(\Omega;\BBC).$
\end{lemma}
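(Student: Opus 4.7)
The plan is to imitate the proof of Lemma~\ref{LpAvebig}, but two new difficulties appear in the regime $p<2$: first, the weight $|v|^{p-2}$ is singular at the zeros of $v$, so that the test function $|v|^{p-2}\bar v$ is not a priori smooth; second, the bound $|v|^{p-2}|u|^2|\nabla\varphi|^2\lesssim |u|^p|\nabla\varphi|^2$, which was trivial when $p\ge 2$, now fails because $\varphi^{p-2}\ge 1$ on the support of $\nabla\varphi$. I would dispose of the first difficulty by regularising the test function: use $\Psi_\delta(v):=(|v|^2+\delta)^{(p-2)/2}\bar v$ in place of $|v|^{p-2}\bar v$ and pass to the limit $\delta\to 0^+$ at the end via monotone/dominated convergence. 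For the second, I would take a higher-order cut-off of the form $\varphi=\eta^s$ with $s\ge 2/p$, where $\eta$ is a standard smooth bump supported in $B_{2r}(x)$ equal to $1$ on $B_r(x)$; a short calculation then gives $\varphi^{p-2}|\nabla\varphi|^2\le s^2\eta^{sp-2}r^{-2}\lesssim r^{-2}$, matching the easy case.

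With these two adjustments, the proof of \eqref{psq2} runs along the lines of Lemma~\ref{LpAvebig}. I would first reduce to smooth $A$ and $B$ so that $u$ is smooth in the interior and $\Psi_\delta(v)$ is a bona fide test function. Testing the identity
\[\mathcal L v = u\,\mathcal L\varphi + A\nabla u\cdot\nabla\varphi + A^{*}\nabla u\cdot\nabla\varphi\]
against $\Psi_\delta(v)$ with $v=u\varphi$, the real part of the principal term $\int\nabla\Psi_\delta(v)\cdot A\nabla v\,dy$ is bounded below (after sending $\delta\to 0^+$) by $\lambda'_p\int|v|^{p-2}|\nabla v|^2\,dy$ through Theorem~\ref{MTdiss}. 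The five right-hand side terms are controlled by Cauchy--Schwarz, Young's inequality, the identity $\bar v\nabla u=\bar u\nabla v-|u|^2\nabla\varphi$, and Lemma~\ref{l2.8}, exactly as in Lemma~\ref{LpAvebig}; the hypothesis \eqref{Bcond} yields $|B|\lesssim r^{-1}$ on $B_{2r}(x)$ because $\delta(y)\ge 2r$ there. Each error term is absorbed into a small fraction of the principal term plus a constant multiple of $r^{-2}\int_{B_{2r}}|u|^p\,dy$, with the higher-order cut-off being essential only for bounding the pure cut-off term $\int |v|^{p-2}|u|^2|\nabla\varphi|^2\,dy$. The approximation procedure at the end of Lemma~\ref{LpAvebig} then removes the smoothness assumption on the coefficients and yields \eqref{psq2}, the $\varepsilon$ term surfacing at this stage because the $L^p$ control of the approximants is extracted from their $L^2$ control; Lemma~\ref{l2.8} finally gives $|u|^{(p-2)/2}u\in W^{1,2}_{\mathrm{loc}}$.

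For \eqref{pAve2} I would combine \eqref{psq2} with the Sobolev--Poincar\'e inequality applied to the compactly supported function $|v|^{p/2-1}v$, together with Lemma~\ref{l2.8}, to produce a self-improvement
\[\Bigl(\dint_{B_r(x)}|u|^{\tilde p}\,dy\Bigr)^{1/\tilde p}\le C_\varepsilon\Bigl(\dint_{B_{4r}(x)}|u|^{p}\,dy\Bigr)^{1/p}+\varepsilon\Bigl(\dint_{B_{4r}(x)}|u|^{2}\,dy\Bigr)^{1/2},\]
with $\tilde p=pn/(n-2)>p$. If $\tilde p\ge 2$, H\"older's inequality on the left delivers \eqref{pAve2} at once. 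If $\tilde p<2$ (possible only when $p$ is close to $1$), I would iterate this step a finite number of times on a chain of nested balls with geometrically increasing radii until the exponent exceeds $2$, and then conclude by H\"older. The main obstacle throughout is making the $\delta\to 0^+$ limit in the test function argument rigorous while preserving the $p$-elliptic lower bound from Theorem~\ref{MTdiss} and keeping every error term uniformly controlled; the modified cut-off $\varphi=\eta^s$ is the auxiliary trick that allows the remaining estimates to proceed in close analogy with the $p\ge 2$ case.
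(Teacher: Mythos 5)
Your route is genuinely different from the paper's, and the core idea is sound: the paper deliberately abandons the cutoff $v=u\varphi$ for $p<2$ (precisely because $\varphi^{p-2}$ blows up where $\varphi$ is small), integrates over the ball itself, truncates the weight by $\rho_\delta(|u|)$, applies Theorem \ref{MTdiss} only on $E_\delta=\{|u|>\delta\}$, kills the $\{|u|\le\delta\}$ contributions with Langer's estimate $\delta^{p-2}\int_{B\setminus E_\delta}|\nabla u|^2\to 0$, and converts the resulting boundary term into an annulus term by averaging the radius. That annulus term cannot be absorbed into the principal term, and it is exactly there --- via H\"older with exponents $(p/(p-1),p)$ and the ordinary $p=2$ Caccioppoli inequality --- that the $\varepsilon\bigl(\dint_{B_{2r}}|u|^2\bigr)^{p/2}$ terms of \eqref{psq2} and \eqref{pAve2} are generated. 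Your power cutoff $\varphi=\eta^{s}$ with $sp\ge 2$ does repair the defect the authors point out: every error term reduces to $|u|^p\varphi^{p-2}|\nabla\varphi|^2\lesssim r^{-2}|u|^p\eta^{sp-2}|\nabla\eta|^2\lesssim r^{-2}|u|^p$ or, after Cauchy--Schwarz, to a piece absorbable into $\int|v|^{p-2}|\nabla v|^2$ over the \emph{whole} support of $v$; since $v$ is compactly supported there is no boundary term at all. If carried out, your argument therefore yields the estimates \emph{without} the $\varepsilon$-terms, which is stronger than \eqref{psq2}--\eqref{pAve2}; in particular your explanation that the $\varepsilon$-term ``surfaces in the approximation stage'' is not right --- the approximation step passes to the limit by weak lower semicontinuity exactly as in Lemma \ref{LpAvebig} and introduces no such term, and in the paper the $\varepsilon$-term has nothing to do with approximation either.

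The one genuine gap is the step you yourself flag but do not resolve: you cannot simply invoke Theorem \ref{MTdiss} for the principal term, because its hypothesis $|v|^{(p-2)/2}v\in W^{1,2}_{loc}$ is essentially the conclusion you are proving (for $p<2$ this is not automatic even for smooth $v$), and because your test function is $\Psi_\delta(v)=(|v|^2+\delta)^{(p-2)/2}\bar v$ rather than $|v|^{p-2}\bar v$. What you need, and must prove, is a $\delta$-uniform pointwise coercivity for the regularized form. Writing $w_\delta=(|v|^2+\delta)^{1/2}$ one computes that $\mathscr{R}e\,\langle A\nabla v,\nabla(w_\delta^{p-2}v)\rangle$ equals $w_\delta^{p-2}$ times the $p$-dissipativity form in which the coefficient $(p-2)$ is replaced by $(p-2)\,|v|^2/w_\delta^2$; since this form is affine in the parameter $t=|v|^2/w_\delta^2\in[0,1]$, it is a convex combination of the $t=0$ form (ordinary ellipticity) and the $t=1$ form ($p$-ellipticity, i.e.\ the pointwise inequality underlying Theorem \ref{MTdiss}), hence bounded below by $\min(\lambda,\lambda_p')\,w_\delta^{p-2}|\nabla v|^2$ uniformly in $\delta$. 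With that lower bound in hand you absorb the Cauchy--Schwarz terms (which carry the same weight $w_\delta^{p-2}\le|v|^{p-2}$) and let $\delta\to0$ by Fatou on the left and dominated convergence on the right; this simultaneously delivers the estimate and the membership $|u|^{(p-2)/2}u\in W^{1,2}_{loc}$, playing the role that the truncation $\rho_\delta$ and Langer's lemma play in the paper. Your iteration scheme for \eqref{pAve2} (Sobolev applied to $|v|^{p/2-1}v$, iterate until the exponent passes $2$) matches the paper's, except that the paper must carry the $\varepsilon$-terms through each step while your version would not need to.
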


\begin{proof}
We assume that $A$ and $B$ are smooth, and remove this assumption later. We can therefore, as before, assume that we have a smooth solution to $\mathcal L$.
The proof of this lemma proceeds in a similar fashion as in \eqref{pAve}, but without introducing the cutoff
function $\varphi$. This will lead to presence of the two extra terms (with $\varepsilon$) on the right hand sides of \eqref{psq2} and \eqref{pAve2} which were not needed in \eqref{psq} and \eqref{pAve}. Unfortunately, if we attempt to proceed as in Lemma \ref{LpAvebig} with $v=u\varphi$ we will run into trouble estimating terms like 
\eqref{4.16} as $|v|^{p-2}\le|u|^{p-2}$ when $p\ge 2$, but in our case $p<2$ this does not hold and we run into problems near the boundary of supp $\varphi$ where $\varphi$ can be arbitrary small.

The fact that we do not introduce the cutoff function $\varphi$ does mean we have fewer terms to take care of but we do have to worry about one additional boundary integral and more importantly we also have to introduce an additional approximation to account for the fact that $p-2$ is negative.
To that end, define a cutoff function $\rho_{\delta}$ as follows.

\begin{equation}
\rho_\delta(s)=\begin{cases} \delta^{\frac{p-2}{2}},&\quad \mbox{if }0\le s\le\delta\\  s^{\frac{p-2}{2}},&\quad \mbox{if } s>\delta.   \end{cases}
\end{equation}

Following the proof of Lemma \ref{LpAve}, we multiply both sides of the equation $\mathcal L u=0$ by 
$\rho_\delta(|u|)^2 \overline{u}$ and integrate:

\begin{align}\label{rhov}
\int_{B_r(x)} \nabla(\rho^2_\delta(|u|)\overline{u}) \cdot A \nabla u \,dy  &=  \int_{B_r(x)} (\rho^2_\delta(|u|)\overline{u}) B\cdot \nabla u\,dy\\ &+ \int_{\partial{B_r(x)}} (\rho^2_\delta(|u|)\overline{u})\nu \cdot A\nabla u \, d\sigma(y). \nonumber
\end{align}
Here $\nu$ is the outer unit normal which for the ball is just $\frac{y-x}{r}$.

The left hand side of \eqref{rhov} splits into terms:
\begin{align}\label{2.35}
\int_{B_r(x)} \nabla(\rho^2_\delta(|u|)\overline{u}) \cdot A \nabla u \,dy &=
\delta^{p-2} \int_{B_{r}(x)\setminus E_\delta} \langle A \nabla u, \nabla u \rangle \,dy\\
&+ \int_{E_\delta\cap {B_r(x)}} A\nabla u \cdot \nabla (|u|^{p-2} \overline{u})\,dy\nonumber
\end{align}
where $E_\delta = \{|u| > \delta\}$.
Applying Theorem \ref{MTdiss} on the open set $E_\delta\cap {B_r(x)}$, we have that 
\begin{equation}\label{2.36}
\mathscr{R}e\int_{E_\delta\cap {B_r(x)}} A\nabla u \cdot \nabla (|u|^{p-2} \overline{u})\,dy \ge \lambda_p \int_{E_\delta\cap {B_r(x)}} |u|^{p-2}|\nabla u|^2\,dy
\end{equation}

Ultimately we will let $\delta \rightarrow 0$, and we will show that the integrals involving $B_r(x)\setminus E_\delta$ will tend to zero.
The main tool will be the following fact, established in \cite{Lan} for smooth functions $u$, namely
\begin{equation}
\delta^r \int_{B_{r}(x)\setminus E_\delta} |\nabla u|^2 dy \rightarrow 0
\end{equation}
for all $r> -1$.

We first take care of the boundary integral in \eqref{rhov}. Observe that \eqref{rhov}-\eqref{2.36} hold not only on the ball $B_r(x)$ but on any enlarged ball $B_{\alpha r}(x)$ for $1\le\alpha\le 3/2$. Hence if we write \eqref{rhov} for each such $\alpha$ and then average over the interval $[1,3/2]$ the last term of \eqref{rhov} will turn into a solid integral over the set $B_{3r/2}(x)\setminus B_r(x)$.

This and \eqref{2.35}-\eqref{2.36} then yields
\begin{align}\label{2.38}
\lambda_p \int_{E_\delta\cap {B_r(x)}} |u|^{p-2}|\nabla u|^2\,dy &\le  
\sup_{\alpha\in[1,3/2]}\left| \int_{B_{\alpha r}(x)} (\rho^2_\delta(|u|)\overline{u}) B\cdot \nabla u\,dy\right|\\
&+\nonumber \left|r^{-1}\int_{B_{3r/2}(x)\setminus B_r(x)} (\rho^2_\delta(|u|)\overline{u})\frac{(y-x)\cdot A\nabla u}{|y-x|}\,dy\right|   +o(1),
\end{align}
where $o(1)$ contains the integral over the complement of $E_\delta$, which tend to zero as $\delta\rightarrow 0$.

Each of the two terms on the right hand side of \eqref{2.38} will split into two integrals, one on $B_{3r/2}(x)\setminus E_\delta$ and one on $E_\delta\cap B_{3r/2}(x)$. Clearly,

\begin{align}\label{2.39}
\left| \int_{B_{\alpha r}(x)} (\rho^2_\delta(|u|)\overline{u}) B\cdot \nabla u\,dy\right|
&\lesssim r^{-1}\int_{E_\delta\cap B_{3r/2}(x)} |u|^{p-1}|\nabla u|\,dy\\\nonumber&+\delta^{p-1}r^{-1}\int_{B_{3r/2}(x)\setminus E_\delta}|\nabla u|\, dy,
\end{align}
and
\begin{align}\label{2.40}
 \left|r^{-1}\int_{B_{3r/2}(x)\setminus B_r(x)} (\rho^2_\delta(|u|)\overline{u})\frac{(y-x)\cdot A\nabla u}{|y-x|}\,dy\right| 
&\lesssim r^{-1}\int_{E_\delta\cap B_{3r/2}(x)} |u|^{p-1}|\nabla u|\,dy\\\nonumber&+\delta^{p-1}r^{-1}\int_{B_{3r/2}(x)\setminus E_\delta}|\nabla u|\, dy.
\end{align}
where the implied constants in \eqref{2.39} and \eqref{2.40} depend on $K$ and $\Lambda$ respectively.
The last terms in both inequalities  behave like $C\delta^{p-1}$, and will tend to zero as $\delta \rightarrow 0$ and hence can be written as $o(1)$ terms. By H\"older inequality we have for the first terms
on the right hand side of  \eqref{2.39}-\eqref{2.40}:
\begin{align}\label{2.41}
r^{-1}\int_{E_\delta\cap B_{3r/2}(x)} |u|^{p-1}|\nabla u|\,dy&\le r^{-1}\left(\int_{B_{3r/2}(x)}|u|^pdy\right)^{(p-1)/p}\left(\int_{B_{3r/2}(x)}|\nabla u|^pdy\right)^{1/p}\\\nonumber
&\le C_\varepsilon r^{-2}\int_{B_{3r/2}(x)}|u|^pdy +\varepsilon r^{p-2} \int_{B_{3r/2}(x)}|\nabla u|^pdy,
\end{align}
for any $\varepsilon>0$.

Putting all terms together therefore yields
\begin{equation}
\lambda_p r^2\int_{E_\delta} |v|^{p-2}|\nabla v|^2\,dy \le C_\varepsilon 
\int_{B_{3r/2}(x)} |u|^p \,dy +\varepsilon r^p \int_{B_{3r/2}(x)} |\nabla u|^p \,dy +o(1),
\end{equation}
where $o(1)$ contains all integrals over the complement of $E_\delta$, which tend to zero as $\delta \rightarrow 0$. The constant $C_\varepsilon$ depends on
$\Lambda$ and $K$ (and $\varepsilon$) but not
on the smoothness 
of $A$ and $B$.
Let $\delta$ tend to zero and obtain
\begin{equation}\label{almostdone}
r^2 \int_{\{y: \,u(y) \ne 0\} \cap B_r(x)} |u|^{p-2}|\nabla u|^2\,dy \le C'_\varepsilon 
 \int_{B_{2r}(x)} |u|^p \,dy+\varepsilon r^p \int_{B_{3r/2}(x)} |\nabla u|^p \,dy.
\end{equation}
Recalling the convention that $|u|^{p-2}|\nabla u|^2$ is taken to be zero whenever $\nabla u = 0$, the integral on the left hand side of 
\eqref{almostdone} can be taken on the set $B_r(x) \setminus \{y: u(y) =0, \nabla u(y) \ne 0\}$. However the measure of $\{y: u(y) =0, \nabla u(y) \ne 0\}$ is zero
and so we can conclude that (after introducing averages)
\begin{equation}\label{almostdone2}
r^2 \dint_{B_r(x)} |u|^{p-2}|\nabla u|^2\,dy \le C''_\varepsilon 
 \dint_{B_{2r}(x)} |u|^p \,dy+\varepsilon r^p \dint_{B_{3r/2}(x)} |\nabla u|^p \,dy
\end{equation}
holds. The final step is to use H\"older inequality and  Caccioppoli inequality (which is just  \eqref{psq} of Lemma \ref{LpAvebig} when $p=2$) for the last term of \eqref{almostdone2}. We have
\begin{equation}\label{almostdone3}
r^p \dint_{B_{3r/2}(x)} |\nabla u|^p \,dy\lesssim \left(r^2\dint_{B_{3r/2}(x)} |\nabla u|^2 \,dy\right)^{p/2}\lesssim  \left(\dint_{B_{2r}(x)} |u|^2 \,dy\right)^{p/2}.
\end{equation}
Finally, \eqref{almostdone2} and \eqref{almostdone3} combined yields  \eqref{psq2}.

The argument for  \eqref{pAve2} is similar to that for the case $p>2$. First, we have the analog of \eqref{RH}.  
\begin{equation}\label{RH2}
\left(\dint_{B_{r}(x)} |u|^{\tilde p} \,dy\right)^{1/{\tilde p}}
\le C_\varepsilon \left(\dint_{B_{\alpha r}(x)} |u|^{p} \,dy\right)^{1/{p}}+\varepsilon \left(\dint_{B_{\alpha r}(x)} |u|^{2} \,dy\right)^{1/{2}}
\end{equation}

The iteration scheme starts with a $p<2$ and stops when
we reach a $p_k$ greater than or equal to $2$. Each iteration will give us additional term of the form
$$\left(\dint_{B_{\alpha^i r}(x)} |u|^{2} \,dy\right)^{1/{2}}$$
on the right hand side for $i=1,2,\dots,k$ multiplied by a constant that can be as small as required, since at each step of the iteration $\varepsilon$ in \eqref{RH2} can be chosen independently of the previous choices.

Finally, since the constants in \eqref{psq2}
and \eqref{pAve2} do not depend on the smoothness of the coefficients of $A$ and $B$, arguments very similar to those in Lemma \ref{LpAvebig} allow us to pass to the limit and obtain these inequalities for solutions to $\mathcal L$ under the assumptions of the Lemma.

\end{proof}

The reverse H\"older inequalities of Lemmas \ref{LpAvebig} and \ref{LpAve} prove Theorem \ref{Regularity}.




%
%
%

\section{Carleson measures, nontangential maximal functions and $p$-adapted square functions}
\subsection{Nontangential maximal and square functions}
\label{SS:NTS}

On a domain of the form 
\begin{equation}\label{Omega-111}
\Omega=\{(x_0,x')\in\BBR\times{\BBR}^{n-1}:\, x_0>\phi(x')\},
\end{equation}
where $\phi:\BBR^{n-1}\to\BBR$ is a Lipschitz function with Lipschitz constant given by 
$L:=\|\nabla\phi\|_{L^\infty(\BBR^{n-1})}$, define for each point $x=(x_0,x')\in\Omega$
\begin{equation}\label{PTFCC}
\delta(x):=x_0-\phi(x')\approx\mbox{dist}(x,\partial\Omega).
\end{equation}
In other words, $\delta(x)$ is comparable to the distance of the point $x$ from the boundary of $\Omega$.

\begin{definition}\label{DEF-1}
A cone of aperture $a>0$ is a non-tangential approach region to the point $Q=(x_0,x') \in \partial\Omega$ defined as
\begin{equation}\label{TFC-6}
\Gamma_{a}(Q)=\{y=(y_0,y')\in\Omega:\,a|x_0-y_0|>|x'-y'|\}.
\end{equation}
\end{definition}

We require $1/a>L$, otherwise the  cone is too large and might not lie inside $\Omega$. 
But when $\Omega=\BBR^n_+$ all parameters $a>0$ may be considered.
Sometimes it is necessary to truncate $\Gamma(Q)$ at height $h$, in which case we write
\begin{equation}\label{TRe3}
\Gamma_{a}^{h}(Q):=\Gamma_{a}(Q)\cap\{x\in\Omega:\,\delta(x)\leq h\}.
\end{equation}

\begin{definition}\label{D:S}
For $\Omega \subset \mathbb{R}^{n}$ as above, the square function of some 
$u\in W^{1,2}_{loc}(\Omega; {\BBC})$ at $Q\in\partial\Omega$ relative 
to the cone $\Gamma_{a}(Q)$ is defined by
\begin{equation}\label{yrdd}
S_{a}(u)(Q):=\left(\int_{\Gamma_{a}(Q)}|\nabla u(x)|^{2}\delta(x)^{2-n}\,dx\right)^{1/2}
\end{equation}
and, for each $h>0$, its truncated version is given by 
\begin{equation}\label{yrdd.2}
S_{a}^{h}(u)(Q):=\left(\int_{\Gamma_{a}^{h}(Q)}|\nabla u(x)|^{2}\delta(x)^{2-n}\,dx\right)^{1/2}.
\end{equation}
\end{definition}

A simple application of Fubini's theorem gives 
\begin{equation}\label{SSS-1}
\|S_{a}(u)\|^{2}_{L^{2}(\partial\Omega)}\approx\int_{\Omega}|\nabla u(x)|^{2}\delta(x)\,dx.
\end{equation}

In [DPP], a ``$p$-adapted" square function was introduced in order to solve Dirichlet problems in the range
$1<p<2$. We shall use this method, and a similar $p$-adapted square function, but for both the ranges $p\geq2$ and 
$p<2$. In the following definition, when $p<2$ we use the convention that the expression $|\nabla u(x)|^{2} |u(x)|^{p-2}$ is zero whenever
$\nabla u(x)$ vanishes.

\begin{definition}\label{D:Sp}
For $\Omega \subset \mathbb{R}^{n}$, the $p$-adapted square function of 
$u\in W^{1,2}_{loc}(\Omega; {\BBC})$ at $Q\in\partial\Omega$ relative 
to the cone $\Gamma_{a}(Q)$ is defined by
\begin{equation}\label{yrddp}
S_{p,a}(u)(Q):=\left(\int_{\Gamma_{a}(Q)}|\nabla u(x)|^{2} |u(x)|^{p-2}\delta(x)^{2-n}\,dx\right)^{1/2}
\end{equation}
and, for each $h>0$, its truncated version is given by 
\begin{equation}\label{yrddp.2}
S_{p,a}^{h}(u)(Q):=\left(\int_{\Gamma_{a}^{h}(Q)}|\nabla u(x)|^{2}|u(x)|^{p-2}\delta(x)^{2-n}\,dx\right)^{1/2}.
\end{equation}
\end{definition}

We have shown that the expressions of the form $|\nabla u(x)|^{2} |u(x)|^{p-2}$, when $u$ is a solution of $\mathcal Lu=0$ are locally integrable and hence the definition of $S_p(u)$ makes sense for such $p$.

\begin{definition}\label{D:NT} 
For $\Omega\subset\mathbb{R}^{n}$ as above, and for 
a continuous $u: \Omega \rightarrow \mathbb C$, the nontangential maximal function ($h$-truncated nontangential maximal function) of $u$
 at $Q\in\partial\Omega$ relative to the cone $\Gamma_{a}(Q)$,
is defined by
\begin{equation}\label{SSS-2}
N_{a}(u)(Q):=\sup_{x\in\Gamma_{a}(Q)}|u(x)|\,\,\text{ and }\,\,
N^h_{a}(u)(Q):=\sup_{x\in\Gamma^h_{a}(Q)}|u(x)|.
\end{equation}
Moreover, we shall also consider a related version  of the above nontangential maximal function.
This is denoted by $\tilde{N}_{p,a}$ and is defined using $L^p$ averages over balls in the domain $\Omega$. 
Specifically, given $u\in L^p_{loc}(\Omega;{\BBC})$ we set
\begin{equation}\label{SSS-3}
\tilde{N}_{p,a}(u)(Q):=\sup_{x\in\Gamma_{a}(Q)}w(x)\,\,\text{ and }\,\,
\tilde{N}_{p,a}^{h}(u)(Q):=\sup_{x\in\Gamma_{a}^{h}(Q)}w(x)
\end{equation}
for each $Q\in\partial\Omega$ and $h>0$ where, at each $x\in\Omega$, 
\begin{equation}\label{w}
w(x):=\left(\dint_{B_{\delta(x)/2}(x)}|u(z)|^{p}\,dz\right)^{1/p}.
\end{equation}
\end{definition}

Above and elsewhere, a barred integral indicates an averaging operation. Observe that, given $u\in L^p_{loc}(\Omega;{\BBC})$, the function $w$ 
associated with $u$ as in \eqref{w} is continuous and $\tilde{N}_{p,a}(u)=N_a(w)$ 
everywhere on $\partial\Omega$.

The $L^2$-averaged nontangential maximal function was introduced in \cite{KP2} in connection with
the Neuman and regularity problem value problems. In the context of $p$-ellipticity, Theorem \ref{Regularity} shows that there is no difference between 
$L^2$ averages and $L^p$ averages, a fact which we record in the following proposition. 

\begin{proposition}\label{P3.5}
Suppose that $u$ is a $W^{1,2}_{loc}(\Omega;\BBC)$ solution to $\mathcal{L}=\mbox{\rm div} A(x)\nabla +B(x)\cdot\nabla$ in $\Omega$, where the matrix $A$ is
assume to be $p$-elliptic for $p \in (p_0, p'_0)$, and $B$ satisfies condition \eqref{Bcond}.
Then, for every $Q\in\partial\Omega$
\begin{equation}\label{pq}
\tilde{N}_{p,a}(u)(Q) \lesssim \tilde{N}_{q,a'}(u)(Q) \quad\forall p,q \in \left(p_0, \frac{p'_0n}{n-2}\right)
\end{equation}
when the aperture parameters of the cones satisfy $a < a'$ and $p\le q$ or $q\ge2$. 

When $a<a'$, $q<2$, $p>q$ and $p,q \in \left(p_0, \frac{p'_0n}{n-2}\right)$ we have a weaker estimate
\begin{equation}\label{pq2}
\tilde{N}_{p,a}(u)(Q) \lesssim \tilde{N}_{q,a'}(u)(Q) +\varepsilon \tilde{N}_{2,a'}(u)(Q),
\end{equation}
for all $\varepsilon>0$.

Finally, for any aperture parameters $a, a'$ in the appropriate range, and for all $p,q$ in the same range as 
\eqref{pq},
\begin{equation}
\|\tilde{N}_{p,a}(u)\|_{L^r(\partial \Omega)} \approx \|\tilde{N}_{q,a'}(u)\|_{L^r(\partial \Omega)}
\end{equation}
for all $r >0$.
\end{proposition}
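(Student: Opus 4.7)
The plan is to combine Jensen's inequality for averages with the reverse Hölder estimate \eqref{RHthm1} from Theorem \ref{Regularity}, and then invoke the classical aperture-change lemma for nontangential maximal functions to upgrade the pointwise bounds to the $L^r$ norm equivalence.

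The case $p \le q$ is immediate: Jensen's inequality yields $w_p(x) \le w_q(x)$ at every interior point, and since $a < a'$ implies $\Gamma_a(Q) \subset \Gamma_{a'}(Q)$, taking suprema gives $\tilde N_{p,a}(u)(Q) \le \tilde N_{q,a}(u)(Q) \le \tilde N_{q,a'}(u)(Q)$ without invoking Theorem \ref{Regularity}.

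For $p > q$, I fix $x \in \Gamma_a(Q)$ and take a bounded-overlap finite covering $\{B_\rho(z_j)\}_{j=1}^N$ of $B_{\delta(x)/2}(x)$ with $\rho = \delta(x)/16$ and $z_j \in B_{\delta(x)/2}(x)$, where $N$ depends only on dimension. Each $z_j$ satisfies $\delta(z_j) \ge \delta(x)/2$, so $B_{4\rho}(z_j) \subset \Omega$ and Theorem \ref{Regularity} applied at $z_j$ with radius $\rho$ gives
\begin{equation*}
\left(\dint_{B_\rho(z_j)} |u|^p \, dy\right)^{1/p} \le C_\varepsilon \left(\dint_{B_{2\rho}(z_j)} |u|^q \, dy\right)^{1/q} + \varepsilon \left(\dint_{B_{2\rho}(z_j)} |u|^2 \, dy\right)^{1/2}.
\end{equation*}
Because $B_{2\rho}(z_j) \subset B_{\delta(z_j)/2}(z_j)$ with comparable radii, each average on the right is controlled up to a dimensional constant by $w_q(z_j)$ or $w_2(z_j)$ respectively. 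Summing the $L^p$ integrals via bounded overlap and taking $p$-th roots yields $w_p(x) \le C_\varepsilon \max_j w_q(z_j) + \varepsilon \max_j w_2(z_j)$. A direct computation from the definition of the cone shows $B_{\delta(x)/2}(x) \subset \Gamma_{1+2a}(Q)$ whenever $x \in \Gamma_a(Q)$, so every $w_s(z_j) \le \tilde N_{s, 1+2a}(u)(Q)$. When $q \ge 2$, Jensen absorbs the $\varepsilon$ term into the $L^q$ term, producing \eqref{pq}; when $q < 2$, the $\varepsilon$ term persists and yields \eqref{pq2}.

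For the $L^r$ norm equivalence, the intermediate aperture $1 + 2a$ produced by the covering is replaced by any target aperture via the classical aperture-change lemma for nontangential maximal functions: for any two admissible apertures $\alpha, \beta$ one has $\|\tilde N_{p,\alpha}(u)\|_{L^r(\partial\Omega)} \approx \|\tilde N_{p,\beta}(u)\|_{L^r(\partial\Omega)}$ for all $r > 0$, established through a Hardy-Littlewood maximal function comparison applied to the continuous function $w$ of \eqref{w}. Chaining this equivalence with the pointwise estimates above, together with the $p \le q$ case, gives the claimed norm equivalence for all $p, q$ in the prescribed range. The main obstacle is precisely this aperture matching: the covering of $B_{\delta(x)/2}(x)$ naturally forces the widened aperture $1 + 2a$, so obtaining the pointwise inequalities \eqref{pq}--\eqref{pq2} at an arbitrary $a' > a$ requires either refining the covering radius to a small multiple of $\delta(x)$ depending on $a' - a$ so that every $z_j$ already lies in $\Gamma_{a'}(Q)$, or absorbing the loss through the aperture-change lemma at the level of $L^r$ norms.
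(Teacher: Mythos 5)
Your overall strategy (Jensen for $p\le q$, the reverse H\"older estimate \eqref{RHthm1} plus a covering for $p>q$, and an aperture--change lemma for the $L^r$ statement) is the same as the paper's, and the $p\le q$ case and the bounded--overlap bookkeeping are fine. The genuine gap is the aperture loss that you yourself flag, and your first proposed repair does not work. The enlargement from $a$ to $1+2a$ is not caused by the covering radius $\rho$: whatever $\rho$ is, the centers $z_j$ must form a $\rho$-net of the fixed ball $B_{\delta(x)/2}(x)$, and that ball always contains points lying outside $\Gamma_{a'}(Q)$ when $a'\le a+\tfrac12$ (in the half-space, for $x$ on the lateral boundary of $\Gamma_a(Q)$ at height $h$, take the point at the same height at horizontal distance $(a+\tfrac12)h$ from $Q$), and points of ``pointwise aperture'' up to $2a+1$ near its bottom. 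So shrinking $\rho$ can never place all the $z_j$ inside $\Gamma_{a'}(Q)$ for $a'$ close to $a$; and since your scheme compares the average over $B_{2\rho}(z_j)$ with the $L^q$ average \eqref{w} evaluated at the \emph{same} point $z_j$, your argument yields \eqref{pq} and \eqref{pq2} only with $a'$ replaced by $1+2a$, not for every $a'>a$ as the proposition asserts.

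The route you need for the pointwise statements is the paper's: apply \eqref{RHthm1} once, centered at $x$ with $r=\delta(x)/2$, in the sharpened form in which $B_{2r}$ is replaced by $B_{\alpha r}$ with $\alpha>1$ as close to $1$ as desired (this is the remark recorded after the proof of Theorem \ref{Regularity}), and then cover the slightly enlarged ball $B_{\alpha\delta(x)/2}(x)$ by a bounded number of \emph{full} averaging balls $B_{\delta(z_j)/2}(z_j)$ whose centers $z_j$ lie in $\Gamma_{a'}(Q)$, have $\delta(z_j)\approx\delta(x)$, and sit above and displaced toward the axis of the cone relative to the points they cover; a direct computation in the half-space shows such a covering exists for every $a'>a$ provided $\alpha-1$ is small enough depending on $a'-a$, and then each $q$-average over $B_{\delta(z_j)/2}(z_j)$ is controlled by $\tilde N_{q,a'}(u)(Q)$. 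Your second remedy, passing through the aperture-change lemma in $L^r$, does rescue the third (norm-equivalence) claim even from your weaker pointwise input, and this is essentially how the paper argues; but for $q<2$ you must also absorb the term $\varepsilon\|\tilde N_{2,a'}(u)\|_{L^r(\partial\Omega)}$, which the paper does by applying \eqref{pq2} with $p=2$, invoking $\|\tilde N_{2,a'}(u)\|_{L^r(\partial\Omega)}\lesssim\|\tilde N_{2,a}(u)\|_{L^r(\partial\Omega)}$, and choosing $\varepsilon$ small; this absorption step should be made explicit rather than subsumed in ``chaining.''
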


\begin{proof} Clearly, \eqref{pq} and \eqref{pq2} follows from \eqref{RHthm1} (as we have noted earlier the integral over $B_{2r}$ in \eqref{RHthm1} can be replaced by $B_{\alpha r}$ for any $\alpha>1$ and hence the cones $\Gamma_{a'}(Q)$ used on the right hand side of \eqref{pq} and \eqref{pq2} can be just little bit larger that the cone $\Gamma_{a}(Q)$ used in definition of $\tilde{N}_{p,a}(u)$ on the left hand side of \eqref{pq} and \eqref{pq2}).

Looking at the equivalence in the $L^r$ norm, consider for the moment still the case when $a'>a$. When $p<q$ or $q\ge2$ 
\begin{equation}\label{3.16}
\|\tilde{N}_{p,a}(u)\|_{L^r(\partial \Omega)}\lesssim \|\tilde{N}_{q,a'}(u)\|_{L^r(\partial \Omega)}
\end{equation}
follows from \eqref{pq}. Otherwise by \eqref{pq2} applied to $p=2$ yields
\begin{equation}\label{3.17}
\|\tilde{N}_{2,a}(u)\|_{L^r(\partial \Omega)}\lesssim \|\tilde{N}_{q,a'}(u)\|_{L^r(\partial \Omega)}+\varepsilon  \|\tilde{N}_{2,a'}(u)\|_{L^r(\partial \Omega)}.
\end{equation}

The norm equivalence of the nontangential maximal functions $\tilde{N}_{p,a}$ and $\tilde{N}_{p,a'}$ for different aperture parameters $a,a'$ requires only a classical real-variable
argument using the level sets $\{\tilde{N}_{p,a}(u)>\lambda\}$ and $\{\tilde{N}_{p,a'}(u)>\lambda\}$. Hence always
\begin{equation}\label{3.16a}
\|\tilde{N}_{p,a'}(u)\|_{L^r(\partial \Omega)}\lesssim \|\tilde{N}_{p,a}(u)\|_{L^r(\partial \Omega)}.
\end{equation}
Combining this with \eqref{3.17} implies that 
\begin{equation}\label{3.17a}
\|\tilde{N}_{2,a}(u)\|_{L^r(\partial \Omega)}\lesssim \|\tilde{N}_{q,a'}(u)\|_{L^r(\partial \Omega)}+\varepsilon  \|\tilde{N}_{2,a}(u)\|_{L^r(\partial \Omega)},
\end{equation}
which by choosing $\varepsilon>0$ sufficiently small then gives
\begin{equation}\label{3.19}
\|\tilde{N}_{2,a}(u)\|_{L^r(\partial \Omega)}\lesssim \|\tilde{N}_{q,a'}(u)\|_{L^r(\partial \Omega)}.
\end{equation}
Combining this with \eqref{3.16} for any $p\in  \left(p_0, \frac{p'_0n}{n-2}\right)$ and $a<a''<a'$ we then have
\begin{equation}\label{3.20}
\|\tilde{N}_{p,a}(u)\|_{L^r(\partial \Omega)}\lesssim \|\tilde{N}_{2,a''}(u)\|_{L^r(\partial \Omega)}\lesssim \|\tilde{N}_{q,a'}(u)\|_{L^r(\partial \Omega)}
\end{equation}
as desired. Hence \eqref{3.16} holds for any $p,q$ in our range as long as $a'>a$.

The reverse inequality to \eqref{3.16} can be obtained using \eqref{3.16a} by choosing $a''<a$. This gives
$$\|\tilde{N}_{q,a'}(u)\|_{L^r(\partial \Omega)}\lesssim \|\tilde{N}_{q,a''}(u)\|_{L^r(\partial \Omega)}$$
and then since $a''<a$ by \eqref{3.16}
$$\|\tilde{N}_{q,a''}(u)\|_{L^r(\partial \Omega)}\lesssim \|\tilde{N}_{p,a}(u)\|_{L^r(\partial \Omega)}.$$

\end{proof}


\subsection{Carleson measures}
\label{SS:Car} 

We begin by recalling the definition of a Carleson measure in a domain $\Omega$ as in \eqref{Omega-111}. 
For $P\in{\BBR}^n$, define the ball centered at $P$ with the radius $r>0$ as
\begin{equation}\label{Ball-1}
B_{r}(P):=\{x\in{\BBR}^n:\,|x-P|<r\}.
\end{equation}
Next, given $Q \in \partial\Omega$, by $\Delta=\Delta_{r}(Q)$ we denote the surface ball  
$\partial\Omega\cap B_{r}(Q)$. The Carleson region $T(\Delta_r)$ is then defined by
\begin{equation}\label{tent-1}
T(\Delta_{r}):=\Omega\cap B_{r}(Q).
\end{equation}

\begin{definition}\label{Carleson}
A Borel measure $\mu$ in $\Omega$ is said to be Carleson if there exists a constant $C\in(0,\infty)$ 
such that for all $Q\in\partial\Omega$ and $r>0$
\begin{equation}\label{CMC-1}
\mu\left(T(\Delta_{r})\right)\leq C\sigma(\Delta_{r}),
\end{equation}
where $\sigma$ is the surface measure on $\partial\Omega$. 
The best possible constant $C$ in the above estimate is called the Carleson norm 
and is denoted by $\|\mu\|_{\mathcal C}$.
\end{definition}

In all that follows we now assume that the coefficients of the matrix $A$ and $B$ of the elliptic operator $\mathcal{L}=\mbox{div} A(x)\nabla +B(x)\cdot\nabla$  
satisfies the following natural conditions.  
First, we assume that the entries $A_{ij}$ of $A$ are in ${\rm Lip}_{loc}(\Omega)$ and the entries of $B$ are $L^\infty_{loc}(\Omega)$. 
Second, we assume that
\begin{equation}\label{CarA}
d\mu(x)=\sup_{B_{\delta(x)/2}(x)}[|\nabla A|^2+|B|^2]\delta(x) \,dx
\end{equation}
is a Carleson measure in $\Omega$. Sometimes, and for certain coefficients of $A$, we will
assume that their Carleson norm $\|\mu\|_{\mathcal{C}}$ is sufficiently small. 
Crucially we have the following result. 

\begin{theorem}\label{T:Car}
Suppose that $d\nu=f\,dx$ and $d\mu(x)=\left[\sup_{B_{\delta(x)/2}(x) }|f|\right]dx$. Assume that
$\mu$ is a Carleson measure. Then there exists a finite 
constant $C=C(L,a)>0$ such that for every $u\in L^{p}_{loc}(\Omega;{\BBC})$ one has
\begin{equation}\label{Ca-222}
\int_{\Omega}|u(x)|^p\,d\nu(x)\leq C\|\mu\|_{\mathcal{C}} 
\int_{\partial\Omega}\left(\tilde{N}_{p,a}(u)\right)^p\,d\sigma.
\end{equation}
Furthermore, consider $\Omega={\mathbb R}^n_+$ where  $\mu$ and $\nu$ are measures as above supported in $\Omega$ and $\delta(x_0,x')=x_0$. Let
 $h:{\mathbb R}^{n-1}\to {\mathbb R}^+$ be a Lipschitz function with Lipschitz norm $L$
and 
$$\Omega_h=\{(x_0,x'):x_0>h(x')\}.$$
Then for any $\Delta\subset {\mathbb R}^{n-1}$ with $\sup_{\Delta} h\le \mbox{diam}(\Delta)/2$ we have
\begin{equation}\label{Ca-222-x}
\int_{\Omega_h\cap T(\Delta)}|u(x)|^p\,d\nu(x)\leq C\|\mu\|_{\mathcal{C}} 
\int_{\partial\Omega_h\cap T(\Delta)}\left(\tilde{N}_{p,a,h}(u)\right)^p\,d\sigma.
\end{equation}
Here for a point $Q=(h(x'),x')\in\partial\Omega_h$ we define
\begin{equation}
\tilde{N}_{p,a,h}(u)(Q) = \sup_{\Gamma_a(Q)}w,\label{eq-Nh}
\end{equation}
where
\begin{equation}\label{TFC-6x}
\Gamma_{a}(Q)=\Gamma_{a}((h(x'),x'))=\{y=(y_0,y')\in\Omega:\,a|h(x')-y_0|>|x'-y'|\}
\end{equation}
and the $L^p$ averages $w$ are defined by \eqref{w} where the distance $\delta$ is taken with respect to the domain $\Omega={\mathbb R}^n_+$.
\end{theorem}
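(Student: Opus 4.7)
The plan is to deduce \eqref{Ca-222} from the classical Carleson embedding applied to the continuous auxiliary function $w$ defined in \eqref{w}, combined with a pointwise/Fubini comparison between $\nu$ and $\mu$. Since $w$ is continuous on $\Omega$ (as $\delta$ depends continuously on $x$ and $|u|^p\in L^1_{loc}(\Omega)$) and $\tilde N_{p,a}(u)=N_a(w)$ on $\partial\Omega$, the standard Carleson embedding theorem (proved via the distribution function of $N_a(w^p)$ and the Carleson condition on tent regions) yields
\begin{equation*}
\int_\Omega w^p\,d\mu \le C(L,a)\,\|\mu\|_{\mathcal C}\int_{\partial\Omega}\bigl(\tilde N_{p,a}(u)\bigr)^p\,d\sigma.
\end{equation*}
It therefore suffices to establish the comparison $\int_\Omega|u|^p\,d\nu \lesssim \int_\Omega w^p\,d\mu$.

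For this comparison, we may assume $f\ge 0$ by replacing $f$ with $|f|$ (which does not change $\mu$). Writing $g(x)=\sup_{B_{\delta(x)/2}(x)}|f|$ so that $d\mu=g\,dx$, I expand $w(x)^p$ as an average over $B_{\delta(x)/2}(x)$ and swap the order of integration:
\begin{equation*}
\int_\Omega w(x)^p g(x)\,dx = \int_\Omega |u(z)|^p\left(\int_\Omega \frac{g(x)\,\mathbf 1_{\{|x-z|<\delta(x)/2\}}}{|B_{\delta(x)/2}(x)|}\,dx\right)dz.
\end{equation*}
The key pointwise step is to bound the inner parenthesis from below by $c\,f(z)$: whenever $|x-z|<\delta(z)/3$ one verifies $\delta(x)\approx\delta(z)$ and $|x-z|<\delta(x)/2$, so $z\in B_{\delta(x)/2}(x)$ and hence $g(x)\ge|f(z)|$; restricting the inner integral to $B_{\delta(z)/3}(z)$, where $|B_{\delta(x)/2}(x)|\approx \delta(z)^n$, produces a factor comparable to $f(z)$. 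This pointwise inequality, integrated against $|u(z)|^p$, gives the comparison and, combined with the Carleson embedding above, proves \eqref{Ca-222}.

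The localized estimate \eqref{Ca-222-x} is obtained by the same two-step scheme executed inside the Carleson tent $T(\Delta)$ over the Lipschitz graph $\{x_0=h(x')\}$: one applies the local form of the classical Carleson embedding on $\Omega_h\cap T(\Delta)$ (with nontangential maximal function $\tilde N_{p,a,h}$ measured from $\partial\Omega_h$), and carries out the Fubini comparison only for $z\in\Omega_h\cap T(\Delta)$, noting that the ball $B_{\delta(z)/3}(z)$ used in the pointwise bound also lies in $T(\Delta)$ (after harmlessly enlarging $\Delta$ to a comparable $\Delta^*$). The assumption $\sup_\Delta h\le \mathrm{diam}(\Delta)/2$ is used precisely to guarantee that the cones $\Gamma_a(Q)$ based at $Q\in\partial\Omega_h\cap T(\Delta)$ cover $\Omega_h\cap T(\Delta')$ for a comparable $\Delta'\subset\Delta$, so that $\tilde N_{p,a,h}(u)$ on $\partial\Omega_h\cap T(\Delta)$ actually controls the values of $w$ inside the tent. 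I expect this geometric localization — the covering of the tent by cones based on the Lipschitz graph, and the verification that no stray mass leaks outside $T(\Delta)$ — to be the chief obstacle in writing out the argument cleanly; the dependence $C=C(L,a)$ in the statement records exactly how the Lipschitz constant $L$ and the aperture $a$ enter through this covering step.
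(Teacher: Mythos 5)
Your proposal is correct and follows essentially the same route as the paper: both arguments reduce \eqref{Ca-222} to the intermediate comparison $\int_\Omega|u|^p\,d\nu\lesssim\int_\Omega w^p\,d\mu$ for the continuous averages $w$ of \eqref{w}, and then invoke the standard Carleson embedding together with the identity $\tilde N_{p,a}(u)=N_a(w)$ (and its localized analogue over the Lipschitz graph for \eqref{Ca-222-x}). The only difference is that the paper proves the comparison via a Whitney decomposition of $\Omega$ (with each Whitney piece $\mathcal O_i$ contained in $B_{\delta(y)/2}(y)$ for every $y\in\mathcal O_i$), whereas you obtain it by Fubini and a pointwise lower bound on the averaged kernel --- an equivalent routine device (with the radius $\delta(z)/3$ adjusted to an $L$-dependent multiple in the graph case) --- and the harmless dilations of $\Delta$ in your local argument are exactly the slack the paper itself tolerates in its applications, cf.\ the factor $2\Delta$ in \eqref{eq5.15}.
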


\begin{proof} For the first part of the claim let 
$$\Omega=\bigcup_{i}{\mathcal O}_i$$
be a Whitney decomposition of $\Omega$ and assume that the Whitney sets ${\mathcal O}_i$ are such that
for any $x\in {\mathcal O}_i$ we have ${\mathcal O}_i\subset B_{\delta(x)/2}(x)$. Also, $|{\mathcal O}_i|\approx |B_{\delta(x)/2}(x)|$.
It follows that on each ${\mathcal O}_i$ we have
$$\int_{{\mathcal O}_i}|u(x)|^p\,d\nu(x)\leq \left[\sup_{{\mathcal O}_i}|f|\right] \int_{{\mathcal O}_i}|u(x)|^p\,dx.$$
By the definition \eqref{w} for $w$ it follows that for any $y\in {{\mathcal O}_i}$ we have
$$\int_{{\mathcal O}_i}|u(x)|^p\,d\nu(x)\lesssim \left[\sup_{B_{\delta(y)/2}(y) }|f|\right] w(y)^p|{\mathcal O}_i|.$$
From this
\begin{equation}
\int_{{\mathcal O}_i}|u(x)|^p\,d\nu(x)\lesssim \int_{{\mathcal O}_i} w(y)^pd\mu(y).\label{eq_W}
\end{equation}
Summing over all $i$ we get
$$\int_{\Omega}|u(x)|^p\,d\nu(x)\lesssim \int_{\Omega} w(y)^pd\mu(y)\lesssim \|\mu\|_{\mathcal C}\int_{\partial\Omega}N_a(w)^p\,d\sigma,$$
where the last inequality follows from the usual inequality for the Carleson measure. Since $\tilde{N}_{p,a}(u)=N_a(w)$ the claim follows.

The second claim has a similar argument. Because $h$ is Lipschitz and $\sup_{\Delta} h\le \mbox{diam}(\Delta)/2$ it follows that
$$\Omega_h\cap T(\Delta)=\bigcup_{i}{\mathcal O}_i$$
where again ${\mathcal O}_i$ are Whitney sets with respect to the original domain $\Omega={\mathbb R}^n_+$. It follows that for each $i$ once again \eqref{eq_W} holds. Summing over $i$ we get
$$\int_{\Omega_h\cap T(\Delta)}|u(x)|^p\,d\nu(x)\lesssim \int_{\Omega_h\cap T(\Delta)} w(y)^pd\mu(y)\lesssim \|\mu\|_{\mathcal C}\int_{\partial\Omega_h\cap T(\Delta)}N_{a,h}(w)^p\,d\sigma,$$
where the last inequality is a standard estimate for a Carleson measure on a Lipschitz domain.
\end{proof}

\subsection{Pullback Transformation}
\label{SS:PT}
The Carleson measure conditions, \eqref{CarA}, on the coefficients of  $\mathcal L$
are compatible with a useful change of variables described in this subsection. 

For a domain $\Omega$ as in \eqref{Omega-111}, consider a mapping 
$\rho:\mathbb{R}^{n}_{+}\to\Omega$ appearing in works of Dahlberg, Ne\v{c}as, 
Kenig-Stein and others, defined by
\begin{equation}\label{E:rho}
\rho(x_0, x'):=\big(x_0+P_{\gamma x_0}\ast\phi(x'),x'\big),
\qquad\forall\,(x_0,x')\in\mathbb{R}^{n}_{+},
\end{equation}
for some positive constant $\gamma$. Here $P$ is a nonnegative function 
$P\in C_{0}^{\infty}(\mathbb{R}^{n-1})$ and, for each $\lambda>0$,  
\begin{equation}\label{PPP-1a}
P_{\lambda}(x'):=\lambda^{-n+1}P(x'/\lambda),\qquad\forall\,x'\in{\mathbb{R}}^{n-1}.
\end{equation}
Finally, $P_{\lambda}\ast\phi(x')$ is the convolution
\begin{equation}\label{PPP-lambda}
P_{\lambda}\ast\phi(x'):=\int_{\mathbb{R}^{n-1}}P_{\lambda}(x'-y')\phi(y')\,dy'. 
\end{equation}
Observe that $\rho$ extends up to the boundary of ${\BBR}^{n}_{+}$ and maps one-to-one from 
$\partial {\BBR}^{n}_{+}$ onto $\partial\Omega$. Also for sufficiently small $\gamma\lesssim L$ 
the map $\rho$ is a bijection from $\overline{\mathbb{R}^{n}_{+} }$ onto $\overline\Omega$ 
and, hence, invertible. 

For $u\in W^{1,2}_{loc}(\Omega;\BBC)$ that solves $\mathcal{L}u=0$ in $\Omega$ with Dirichlet 
datum $f$ consider $v:=u\circ\rho$ and $\tilde{f}:=f\circ\rho$. The change of variables 
via the map $\rho$ just described implies that $v\in W^{1,2}_{loc}(\mathbb{R}^{n}_{+};{\BBC})$ 
solves a new elliptic PDE of the form
\begin{equation}\label{ESvvv}
0=\mbox{div} (\tilde{A}(x)\nabla v)+\tilde{B}(x)\cdot\nabla v,
\end{equation}
with boundary datum $\tilde{f}$ on $\partial \mathbb{R}^{n}_{+}$. Hence, solving a boundary value 
problem for $u$ in $\Omega$ is equivalent to solving a related boundary value problem for $v$ in 
$\mathbb{R}^{n}_{+}$. Crucially, if the coefficients of the original system are such that \eqref{CarA} 
is a Carleson measure, then the coefficients of $\tilde{A}$ and $\tilde{B}$ satisfy an analogous 
Carleson condition in the upper-half space. If, in addition, the Carleson norm of \eqref{CarA} 
is small and $L$ (the Lipschitz constant for the domain $\Omega$) is also small, then the Carleson 
norm for the new coefficients $\tilde{A}$ and $\tilde{B}$ will be correspondingly small. Hence the 
map $\rho$ allows us to assume that the
domain is $\Omega={\BBR}^n_+$.

Moreover, this transformation also preserves $p$-ellipticity.
\section{The $L^p$-Dirichlet problem}
\label{S3}
 
When an operator $\mathcal L$ is as in Theorem \ref{S3:T1} is uniformly elliptic in the sense of \eqref{EllipAA},
the Lax-Milgram lemma can be applied and guarantees the existence of weak solutions.
That is, 
given any $f\in \dot{B}^{2,2}_{1/2}(\partial\Omega;{\BBC})$, the homogenous space of traces of functions in $\dot{W}^{1,2}(\Omega;{\BBC})$, there exists a unique (up to a constant)
$u\in \dot{W}^{1,2}(\Omega;{\BBC})$ such that $\mathcal{L}u=0$ in $\Omega$ and ${\rm Tr}\,u=f$ on $\partial\Omega$. We call these solutions \lq\lq  energy solutions" and use them to define the notion of solvability of the $L^p$ Dirichlet problem.

\begin{definition}\label{D:Dirichlet} 
Let $\Omega$ be the Lipschitz domain introduced in \eqref{Omega-111} and fix an integrability exponent 
$p\in(1,\infty)$. Also, fix an aperture parameter $a>0$. Consider the following Dirichlet problem 
for a complex valued function $u:\Omega\to{\BBC}$:
\begin{equation}\label{E:D}
\begin{cases}
0=\partial_{i}\left(A_{ij}(x)\partial_{j}u\right) 
+B_{i}(x)\partial_{i}u 
& \text{in } \Omega,
\\[4pt]
u(x)=f(x) & \text{ for $\sigma$-a.e. }\,x\in\partial\Omega, 
\\[4pt]
\tilde{N}_{2,a}(u) \in L^{p}(\partial \Omega), &
\end{cases}
\end{equation}
where the usual Einstein summation convention over repeated indices ($i,j$ in this case) 
is employed. 

We say the Dirichlet problem \eqref{E:D} is solvable for a given $p\in(1,\infty)$ if  there exists a
$C=C(p,\Omega)>0$ such that
for all boundary data
$f\in L^p(\partial\Omega;{\BBC})\cap B^{2,2}_{1/2}(\partial\Omega;{\BBC})$ the unique \lq\lq energy solution"
satisfies the estimate
\begin{equation}\label{y7tGV}
\|\tilde{N}_{2,a} (u)\|_{L^{p}(\partial\Omega)}\leq C\|f\|_{L^{p}(\partial\Omega;{\BBC})}.
\end{equation}
\end{definition}

\noindent{\it Remark.}  Given $f\in\dot{B}^{2,2}_{1/2}(\partial\Omega;{\BBC})\cap L^p(\partial\Omega;{\BBC})$
the corresponding energy solution constructed above is unique (since the decay of $L^p$ eliminates constant solutions). As the space 
$\dot{B}^{2,2}_{1/2}(\partial\Omega;{\BBC})\cap L^p(\partial\Omega;{\BBC})$ is dense in 
$L^p(\partial\Omega;{\BBC})$ for each $p\in(1,\infty)$, it follows that there exists a 
unique continuous extension of the solution operator
$f\mapsto u$
to the whole space $L^p(\partial\Omega;{\BBC})$, with $u$ such that $\tilde{N}_{2,a} (u)\in L^p(\partial\Omega)$ 
and the accompanying estimate $\|\tilde{N}_{2,a} (u) \|_{L^{p}(\partial \Omega)} 
\leq C\|f\|_{L^{p}(\partial\Omega;{\BBC})}$ being valid.

Moreover, we shall establish in the appendix that under the assumptions of Theorem \ref{S3:T1} for any $f\in L^p(\partial \Omega;\mathbb C)$ the corresponding solution $u$ constructed by the continuous extension attains the datum $f$ as its boundary values in the following sense.
Consider the average $\tilde u:\Omega\to \mathbb C$ defined by
$$\tilde{u}(x)=\dint_{B_{\delta(x)/2}(x)} u(y)\,dy,\quad \forall x\in \Omega.$$
Then 
\begin{equation}
f(Q)=\lim_{x\to Q,\,x\in\Gamma(Q)}\tilde u(x),\qquad\text{for a.e. }Q\in\partial\Omega,
\end{equation}
where the a.e. convergence is taken with respect to the ${\mathcal H}^{n-1}$ Hausdorff measure on $\partial\Omega$.

\vskip2mm
Let us make some observations that explain the structural assumptions we have made in Theorem \ref{S3:T1}.
As we have already stated it suffices to formulate the result in the case $\Omega={\mathbb R}^n_+$ by using the pull-back map introduced above. Since Theorem \ref{S3:T1} requires that the coefficients have {\it small} Carleson norm this puts a restriction on the size of the Lipschitz constant $L=\|\nabla\phi\|_{L^\infty}$ of the map $\phi$ that defines the domain $\Omega$ in \eqref{Omega-111}.
The constant $L$ will have also to be small (depending on $\lambda_p$, $\Lambda$, $n$ and $p$). \vglue1mm

For technical reasons in the proof we also need that all coefficients $A_{0j}$, $j=0,1,\dots,n-1$ are real. This can be ensured as follows. When $j>0$ observe that we have 
\begin{equation}
\partial_0([\mathscr{I}m\,A_{0j}]\partial_j u)=\partial_j([\mathscr{I}m\,A_{0j}]\partial_0 u)+(\partial_0 [\mathscr{I}m\,A_{0j}])\partial_ju-([\partial_j \mathscr{I}m\,A_{0j}])\partial_0u\label{eqSWAP}
\end{equation}
which allows to move the imaginary part of the coefficient $A_{0j}$ onto the coefficient $A_{j0}$ at the expense of two (harmless) first order terms. This does not work for the coefficient $A_{00}$. Instead we make the following observation. 

Suppose that the measure \eqref{CarA}  associated to an operator $ \mathcal L = \partial_{i}\left(A_{ij}(x)\partial_{j}\right)  +B_{i}(x)\partial_{i}$  is Carleson. 
Consider a related operator 
$ \tilde{\mathcal L} = \partial_{i}\left(\tilde{A}_{ij}(x)\partial_{j}\right) 
+\tilde{B}_{i}(x)\partial_{i}$,
where $\tilde A = \alpha{A}$ and $\tilde B=\alpha{B} - (\partial_{i}\alpha){A}_{ij}\partial_j$, and
$\alpha\in L^\infty(\Omega)$ is a complex valued function such that $|\alpha(x)|\ge \alpha_0>0$ and $|\nabla\alpha|^2x_0$ is a Carleson measure. 
 
Observe that a weak solution $u$ to $\tilde{\mathcal L}u=0$ is also a weak solution to $\mathcal Lu=0$ and that
the new coefficients of $\tilde A$ and $\tilde B$ also satisfy a Carleson measure condition as in \eqref{CarA}, from the assumption on $\alpha$. We will only require that the coefficient ${\tilde A}_{00}$ is real but we may as well ensure for simplicity that it equals to $1$. Clearly, if we choose $\alpha = {A}_{00}^{-1}$, then the new operator $\tilde L$ will have this property. When ${A}_{00}$ (and hence $\alpha$) is real, then  $\tilde{A}$. Similarly, if ${A} $ is $p$-elliptic and $\mathscr{I}m\,{A}_{00}$ is 
sufficiently small (depending on the ellipticity constants), then $\tilde A$ will also be  $p$-elliptic. However, if $\mathscr{I}m\,\alpha$ is not small,
the $p$-ellipticity, after multiplication of ${A}$ by $\alpha$ may not be preserved. Thus, we assume in our main result (Theorem \ref{S3:T1}) the 
$p$-ellipticity of the new matrix  $\tilde A$ which has all coefficients $\tilde A_{0j}$, $j=0,1,\dots,n-1$ real, as this is not implied in the general case from the $p$-ellipticity of the original matrix $A$.

The solutions to the Dirichlet problem in the infinite domain $\BBR^n_+$ will be obtained as a limit of solutions in
infinite strips $\Omega^h=\{x=(x_0, x')) \in \BBR \times {\BBR}^{n-1}: 0 < x_0 < h\}$. We define them as follows.

\begin{definition}\label{D:DirichletStrip} 
Let $\Omega = {\BBR}^n_+$, and let $\Omega^h$ be the infinite strip 
$$\Omega^h=\{x=(x_0, x')) \in \BBR \times {\BBR}^{n-1}: 0 < x_0 < h\},$$ and let
$p\in(1,\infty)$. Also, fix an aperture parameter $a>0$. 
Let $u$ be a complex valued function $u:\Omega\to{\BBC}$ such that

\begin{equation}\label{E:D-strip}
\begin{cases}
0=\partial_{i}\left(A_{ij}(x)\partial_{j}u\right) 
+B_{i}(x)\partial_{i}u 
& \text{in } \Omega^h,
\\[4pt]
u(x_0,x') = 0, & \text{for all }x_0 \geq h,
\\[4pt]
u(x)=f(x) & \text{ for $\sigma$-a.e. }\,x\in\partial\Omega, 
\\[4pt]
\tilde{N}_{2,a}(u) \in L^{p}(\partial \Omega), &
\end{cases}
\end{equation}
where the usual Einstein summation convention over repeated indices ($i,j$ in this case) 
is employed. 

We say the Dirichlet problem \eqref{E:D-strip} is solvable for a given $p\in(1,\infty)$ if  there exists a
$C=C(p,\Omega)>0$ such that
for all boundary data
$f\in L^p(\partial\Omega;{\BBC})\cap B^{2,2}_{1/2}(\partial\Omega;{\BBC})$ we have that $u\big|_{\Omega^h}$ is the unique ``energy solution" to
\begin{equation}\label{E:D-energy}
\begin{cases}
0=\partial_{i}\left(A_{ij}(x)\partial_{j}u\right) 
+B_{i}(x)\partial_{i}u 
& \text{in } \Omega^h,
\\[4pt]
u(x_0,x') = 0, & \text{for } x_0=h
\\[4pt]
u(x)=f(x) & \text{ for $\sigma$-a.e. }\,x\in\partial\Omega, 
\end{cases}
\end{equation}

and satisfies the estimate
\begin{equation}\label{y7tGV-2}
\|\tilde{N}_{2,a} (u)\|_{L^{p}(\partial\Omega)}\leq C\|f\|_{L^{p}(\partial\Omega;{\BBC})}.
\end{equation}
\end{definition}

\bigskip

We now give the proof of Theorem \ref{S3:T1}. We will establish the solvability of the Dirichlet problem \eqref{E:D-strip} assuming that the coefficients of $A$ and $B$ are smooth, applying the results of sections 5 and 6. The constants
will not depend on the degree of smoothness or on the width of the strip.
Then, a limiting argument proves Theorem \ref{S3:T1} for smooth coefficients. Finally, we consider smooth approximations of $\mathcal L$, and another
limiting argument gives Theorem \ref{S3:T1}.

\begin{proof}

Let  $u_h$ be the energy solution in $\Omega^h$ as in Definition \ref{D:DirichletStrip}.
As follows from Corollary \ref{S4:C2}
\begin{equation}\label{S3:L4:E00bbSS}
\lambda'_p\iint_{\BBR^n_+}|\nabla u_h|^{2}|u_h|^{p-2}x_0\,dx'\,d x_0 
\leq\int_{\BBR^{n-1}}|f(x')|^{p}\,dx'
+C\|\mu'\|_{\mathcal{C}}\int_{\BBR^{n-1}}\left[\tilde{N}_{p,a}(u_h)\right]^{p}\,dx'.
\end{equation}

We shall momentarily assume finiteness of the quantities involving nontangential maximal functions and square functions needed to apply 
Corollary \ref{S3:C7}, and return to this point later.  
Thus we have
\begin{equation}
\int_{\BBR^{n-1}}\left[\tilde{N}_{p,a}(u_h)\right]^{p}\,dx'
\leq C_1\int_{\BBR^{n-1}}|f(x')|^{p}\,dx'
+C_2\|\mu'\|_{\mathcal{C}}\int_{\BBR^{n-1}}\left[\tilde{N}_{p,a}(u_h)\right]^{p}\,dx'.
\end{equation}
Here the constants $C_1,C_2$ depend on $p,\,\lambda_p\,,\Lambda,\,n$ and $\|\mu\|_C$. It follows that for 
$$\|\mu'\|_C<\frac1{2C_2}$$
we have that
\begin{equation}\label{goodgrief}
\int_{\BBR^{n-1}}\left[\tilde{N}_{p,a}(u_h)\right]^{p}\,dx'
\leq 2C_1\int_{\BBR^{n-1}}|f(x')|^{p}\,dx'.
\end{equation}

We now consider the limit of $u_h$, as $h \to \infty$. 
The uniform Lax-Milgram estimate on $\|\nabla u_h\|_{L^2({\BBR}^n_+)}$ by $\|f\|_{\dot B_{1/2}^{2,2}}$, and the fact that $\text{Tr}(u_h) = f$, gives a weakly convergent subsequence to some $u$ with
$\|\nabla u\|_{L^2({\BBR}^n_+)} \leq C \|f\|_{\dot B_{1/2}^{2,2}}$ and $\text{Tr}(u) = f$.
This subsequence is therefore strongly convergent to $u$ in $L^2_{loc}({\BBR}^n_+)$
It follows that the $L^2$ averages $w_h$ of $u_h$ converge locally and uniformly to $w$, the $L^2$ averages of $u$ in $C_{loc}({\BBR}^n_+)$.

Let $\Gamma_k(x')$ be the doubly truncated cone $\Gamma(x') \cap \{1/k < x_0 < k\}$.
Define $$\tilde{N}_k(u)(x') = \sup_{y \in \Gamma_k(x')} |w(y)|,$$ and with $\tilde{N}_k(u_h)(x')$ defined analogously.
Then we have
$$\tilde{N}_k(u_h)(x') \to \tilde{N}_k(u)(x') \quad \text{uniformly on compact subsets $K \subset {\BBR}^{n-1}.$}$$
Finally, using \eqref{goodgrief}, this give on each such set $K$,
$$\|\tilde{N}_k(u)\|_{L^p(K)} = \lim_{h \to \infty}   \|\tilde{N}_k(u_h)\|_{L^p(K)} \leq C\|f\|_{L^p({\BBR}^{n-1})}. 
$$
The constant $C$ in the estimate above is independent of $K$ and $k$, so that taking the supremum in each of $k$ and $K$ gives the desired 
estimate assuming the coefficients are smooth.
Finally, we approximate our coefficients by smooth functions, and the passage from the smooth coefficient case requires a further argument that 
mirrors the limiting process above using truncations of cones.

We now address the finiteness requirements of Corollary \ref{S3:C7}, in two separate cases. In the a priori estimate $\|\tilde{N}_{p,a}(u_h)\|_{L^p({\BBR}^{n-1})}< c$, the 
constant $c$ is allowed to depend on measures of smoothness of the coefficients or on the truncation parameter $h$, as the only property used to obtain estimate  (\ref{goodgrief})  is the fact
that $\|\tilde{N}_{p,a}(u_h)\|_{L^p({\BBR}^{n-1})}$ is finite.

Consider first the case $p \geq 2$. First,
$$\|\tilde{N}_{p,a}(u_h)\|^2_{L^2({\BBR}^{n-1})}  \lesssim \|S_{2,a}(u_h)\|^2_{L^2({\BBR}^{n-1})} \lesssim \int_{\Omega^h} |\nabla u_h|^2 dx < \infty.$$
By interpolation with the Agmon-Douglis-Nirenberg (\cite{ADN}) $L^{\infty}$ bound for solutions to smooth systems, it follows that
$\|\tilde{N}_{p,a}(u_h)\|_{L^p({\BBR}^{n-1})}< \infty.$ Since $p \geq p'$, this suffices to apply Corollary \ref{S3:C7}.

Now suppose that $p < 2$.
In this case, we need that both $\|\tilde{N}_{p,a}(u_h)\|_{L^p({\BBR}^{n-1})}< \infty$ and $\|S_{p',a}(u_h)\|^p_{L^p({\BBR}^{n-1}} < \infty.$
We shall use an extrapolation argument based on an method in \cite{DKV} of obtaining $L^{2-\varepsilon}$ estimates of nontangential maximal functions
from $L^2$ estimates on sawtooth domains. See also \cite{DHM}, where this technique was used to get solvability of the $L^p$ Dirichlet problem for elliptic systems for
$2 - \varepsilon < p < 2$.
In particular, the argument of \cite{DKV}, reproduced in section 6 of \cite{DHM} for systems and hence valid in our setting, gives that $\|\tilde{N}_{2,a}(u_h)\|_{L^{p_0}({\BBR}^{n-1})}< \infty$
for $p_0=2 - \varepsilon$ and hence the same is true for $\|\tilde{N}_{p_0,a}(u_h)\|_{L^{p_0}({\BBR}^{n-1})}$. The quantity $\varepsilon$ depends on the constant $C_2$ in the
$L^2$ norm inequality between the nontangential maximal function and the square function $S_2$.  We now observe that this gives
$\|S_{p'_0,a}(u_h)\|^2_{L^{p_0}({\BBR}^{n-1})} < \infty$ as well: Use the fact that, pointwise, $$S_{p'_0,a}(u_h) < C_{\eta}S_{p_0,a}(u_h) + \eta N(u_h)$$  where $N$ is the pointwise
maximal function, and $\eta$ is as small as we wish, together with Corollary \ref{S4:C2} to bound $\|S_{p_0,a}(u_h)\|_{L^{p_0}({\BBR}^{n-1})}$ by 
$\|\tilde{N}_{p_0,a}(u_h)\|_{L^{p_0}({\BBR}^{n-1})}$.

Once these two quantities are finite, Corollary \ref{S3:C7} applies and we obtain (\ref{goodgrief}), and hence the same estimate for $u$, for $p_0=2-\varepsilon$ 
and a constant $C_{2-\epsilon}$.

The very same argument, now invoking the $L^{p_0}$ estimate gives an $L^{p_0 - \varepsilon'}$ estimate where $\varepsilon'$ now depends on $C_{2-\varepsilon}$. In other words,
we apply the same argument as \cite{DKV} but starting from known estimates for the nontangential maximal function in  $L^{p_0}$ instead of $L^2$. There is no 
difference in the structure of the argument. We can continue this bootstrapping as long as we stay in the range of $p$-ellipticity and as long as we can
be sure that we are moving by an amount $\varepsilon$ which is not getting smaller at each step. This last point is assured by the fact that the constants $C_p$ in the 
$L^p$ norm inequalities (\ref{goodgrief}) only depend on the Lipschitz constants, $p$-ellipticity and the Carleson measure norm of the coefficients. Thus, for fixed $q<2$ where the operator
is $q$-elliptic, the constants $C_p$ for $q<p<2$ are uniformly bounded.

\end{proof}

\section{Estimates for the $p$-adapted square function $S_p(u)$}
\label{S4}

In this, and the next, section we make the assumption that the coefficients of $A$ and $B$ are smooth, in order to ensure the finiteness of 
$L^p$ norms of the nontangential maximal function.

We fix an $h>1$, and an infinite strip $\Omega^h$ defined above, and let $u$ be an energy solution to \eqref{E:D-energy}, extended to be zero above 
height $h$.  
In this section we establish a one sided estimate of the $p$-adapted square function of $u$ in terms of boundary data and 
its nontangential maximal function, with constants independent of $h$.

\begin{lemma}\label{S3:L4}
Let $u:\Omega \to  \BBC$ be as above, 
with the Dirichlet boundary datum $f\in \dot{B}^{2,2}_{1/2}(\partial\Omega;{\BBC}) \cap  L^{p}(\partial \Omega;{\BBC})$. Assume that 
$A$ is $p$-elliptic and smooth in $\BBR^n_+$ with $A_{00} =1$ and $A_{0j}$ real and that the measure $\mu'$ defined as in \eqref{Car_hatAAB} is Carleson. Then there exists a constant $C=C(\lambda_p,\Lambda,p,n)$ such that for all $r>0$
\begin{align}\label{S3:L4:E00}
&\hskip -0.20in
p\frac{\lambda_p}2\iint_{[0,r/2]\times\partial\Omega}|u|^{p-2}|\nabla u|^{2}x_0\,dx'\,d x_0 
+\frac{2}{r}\iint_{[0,r]\times\partial \Omega} |u(x_0,x')|^{p}\,dx'\,dx_0 
\nonumber\\[4pt]
&\hskip 0.20in
\leq\int_{\partial\Omega}|f(x')|^{p}\,dx' 
+\int_{\partial\Omega}|u(r,x')|^{p}\,dx'
+C\|\mu'\|_{\mathcal{C}}\int_{\partial\Omega}\left[\tilde{N}^{r}_{p,a}(u)\right]^{p}\,dx'.
\end{align}
\end{lemma}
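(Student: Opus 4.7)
My plan is to test the equation $\mathcal Lu=0$ against $|u|^{p-2}\bar u\,\zeta(x_0)$, where $\zeta$ is the piecewise smooth weight defined by $\zeta(x_0):=x_0(1-x_0/r)$ on $[0,r]$ and $\zeta\equiv 0$ elsewhere. This choice vanishes at $x_0=0$ and $x_0=r$, satisfies $\zeta\le x_0$ everywhere and $\zeta\ge x_0/2$ on $[0,r/2]$, and crucially has $\zeta''(x_0)=-2/r$ on $(0,r)$ with boundary values $\zeta'(0^+)=1$ and $\zeta'(r^-)=-1$. Because $\zeta(0)=0$ and $u\equiv 0$ for $x_0\ge h$, no boundary contributions arise when we integrate by parts the leading-order piece of the equation, and we obtain
\[
\int\langle A\nabla u,\nabla(|u|^{p-2}\bar u)\rangle\,\zeta\,dx+\int A_{0j}\partial_j u\,|u|^{p-2}\bar u\,\zeta'\,dx=\int B\cdot\nabla u\,|u|^{p-2}\bar u\,\zeta\,dx.
\]
Taking real parts, Theorem \ref{MTdiss} bounds the first term below by $\lambda_p\int|u|^{p-2}|\nabla u|^2\zeta\,dx$, which together with the weight inequality $\zeta\ge x_0/2$ on $[0,r/2]$ will supply the first left-hand term of \eqref{S3:L4:E00} once we multiply by $p$.

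Because every $A_{0j}$ is real, the identity $\mathscr{R}e[A_{0j}\partial_j u\,|u|^{p-2}\bar u]=\frac{A_{0j}}{p}\partial_j(|u|^p)$ rewrites the middle term above as $\frac{1}{p}\int A_{0j}\partial_j(|u|^p)\zeta'\,dx$. For the $j=0$ component, $A_{00}=1$ permits a one-dimensional integration by parts in $x_0$ that, using the explicit values $\zeta'(0^+)=1$, $\zeta'(r^-)=-1$ and $\zeta''=-2/r$ on $(0,r)$, evaluates exactly to $\frac{1}{p}\bigl[-\!\int|u(r,\cdot)|^p\,dx'-\!\int|f|^p\,dx'+\frac{2}{r}\iint_{[0,r]\times\partial\Omega}|u|^p\,dx'\,dx_0\bigr]$. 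Transferring the $|f|^p$ and $|u(r,\cdot)|^p$ pieces to the right and the $\frac{2}{r}\iint|u|^p$ piece to the left provides exactly the slab $L^p$ term and the two boundary integrals appearing in \eqref{S3:L4:E00}.

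The hard part is bounding the tangential terms $\frac{1}{p}\sum_{j\ge 1}\int A_{0j}\partial_j(|u|^p)\zeta'\,dx$ and the drift term $\int B\cdot\nabla u\,|u|^{p-2}\bar u\,\zeta\,dx$ by $C\|\mu'\|_{\mathcal C}\int[\tilde N_{p,a}^r(u)]^p\,dx'$. A direct Cauchy-Schwarz fails because $|\zeta'|\approx 1$ near $x_0=0$ while the Carleson measures inside $\mu'$ are quadratic in the coefficient derivatives, leaving an uncontrollable dual factor $\int|u|^p/x_0\,dx$. My key maneuver is to integrate by parts a \emph{second} time in $x_0$, transferring the derivative from $\zeta'$ onto $A_{0j}\partial_j(|u|^p)$ (boundary terms vanish as $\zeta(0)=\zeta(r)=0$), and then integrating by parts in $x_j$ on the mixed-derivative piece. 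Summing over $j\ge 1$ gives
\[
\sum_{j\ge 1}\!\int\!A_{0j}\partial_j(|u|^p)\zeta'\,dx=-\!\sum_{j\ge 1}\!\int(\partial_0 A_{0j})\partial_j(|u|^p)\zeta\,dx+\!\int\!\Bigl(\sum_{j\ge 1}\partial_j A_{0j}\Bigr)\partial_0(|u|^p)\zeta\,dx,
\]
which pairs a \emph{single} coefficient derivative (one of those appearing in $\mu'$) with one derivative of $|u|^p$ against the mild weight $\zeta\le x_0$. Each resulting expression, and likewise the $B$-term, now splits via Cauchy-Schwarz as $|\partial_0 A_{0j}|\,|u|^{p-1}|\nabla u|\,\zeta\le(|\partial_0 A_{0j}|^2|u|^p\zeta)^{1/2}(|u|^{p-2}|\nabla u|^2\zeta)^{1/2}$. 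Since $\zeta\le x_0$ and the integrands are supported in $\{x_0\le r\}$, Theorem \ref{T:Car} applied to the corresponding coefficient-derivative Carleson measures in $d\mu'$ bounds the first factor by $C\|\mu'\|_{\mathcal C}\int[\tilde N_{p,a}^r(u)]^p\,dx'$, and Young's inequality converts the product into $\epsilon\int|u|^{p-2}|\nabla u|^2\zeta\,dx+C_\epsilon\|\mu'\|_{\mathcal C}\int[\tilde N_{p,a}^r(u)]^p\,dx'$. Choosing $\epsilon$ small enough relative to $p\lambda_p$ to absorb the first summand into the square-function term on the left yields \eqref{S3:L4:E00}.
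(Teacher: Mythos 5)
Your argument is correct in its essentials and uses the same core machinery as the paper: testing against $|u|^{p-2}\overline{u}$ times a weight vanishing at $x_0=0$, the integral $p$-ellipticity bound of Theorem \ref{MTdiss}, and the crucial algebraic maneuver of exploiting $A_{00}=1$ and $A_{0j}$ real together with a second integration by parts (first in $x_0$, then in $x_j$ on the mixed term) so that only the combinations $\partial_0A_{0j}$ and $\sum_j\partial_jA_{0j}$ from \eqref{Car_hatAAB} appear, each paired quadratically and controlled via Theorem \ref{T:Car} and absorption. Where you genuinely differ is in the localization scheme. The paper uses the weight $x_0\zeta(x')$ with a tangential cutoff, sums over a partition of unity so that the cutoff-derivative terms cancel, and only at the end averages the resulting inequality in $r$ to handle the boundary term $r\,\partial_0(|u|^p)(r,x')$ it produces at $x_0=r$. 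Your tent weight $\zeta(x_0)=x_0(1-x_0/r)$ builds that averaging in from the start: $\zeta''=-2/r$ produces the slab term $\frac{2}{r}\iint|u|^p$ directly, and $\zeta(r)=0$ eliminates the awkward top boundary term, which is a genuine streamlining.

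Two caveats. First, by working globally in $x'$ you owe a justification for the integration by parts in $x_j$ over all of $\mathbb{R}^{n-1}$ and, more importantly, for the final absorption step: subtracting $\epsilon\int|u|^{p-2}|\nabla u|^2\zeta\,dx$ from both sides requires this global quantity to be finite a priori (or a limiting argument over expanding tangential balls). This is exactly what the paper's local-then-sum structure is designed to avoid: the absorption there is performed on the finite local quantity $\mathcal I$ over $[0,r]\times B_{2r}$, and the error terms carrying $\partial_j\zeta$ cancel upon summing the partition of unity. You can repair this either by inserting a tangential cutoff $\zeta(x')$ as in the paper, or by running your estimate on $[0,r]\times B_R$ and letting $R\to\infty$, but some such device is needed. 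Second, a minor bookkeeping point: after absorbing the $\epsilon$-term you obtain the constant $p(\lambda_p-\epsilon)/2$ rather than $p\lambda_p/2$ on the square-function term; this is immaterial (only some fixed positive constant is used downstream, e.g.\ in Corollary \ref{S4:C2}), but you should state it rather than claim the literal constant of \eqref{S3:L4:E00}. You should also note explicitly that for $r>h$ one tests over the strip $[0,h]$ where $u$ solves the equation (the zero extension is not a solution across $\{x_0=h\}$), the boundary terms at $x_0=h$ vanishing because $u$ has zero trace there; this is the analogue of the paper's separate remark for $r\ge h$.
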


\begin{proof} To proceed, fix an arbitrary $y'\in\partial\Omega\equiv{\mathbb{R}}^{n-1}$, and consider first an $r\leq h$.
Pick 
a smooth cutoff function $\zeta$ which is $x_0-$independent and satisfies
\begin{equation}\label{cutoff-F}
\zeta= 
\begin{cases}
1 & \text{ in } B_{r}(y'), 
\\
0 & \text{ outside } B_{2r}(y').
\end{cases}
\end{equation}
Moreover, assume that $r|\nabla \zeta| \leq c$ for some positive constant $c$ independent of $y'$. 
We begin by considering the integral quantity 
\begin{equation}\label{A00}
\mathcal{I}:=\mathscr{R}e\,\iint_{[0,r]\times B_{2r}(y')}A_{ij}\partial_{j}u 
\partial_{i}(|u|^{p-2}\overline{u})x_0\zeta\,dx'\,dx_0
\end{equation}
with the usual summation convention understood. With $\chi=x_0\zeta$ we have by Theorem \ref{MTdiss}
for some $\lambda_p>0$
\begin{equation}\label{cutoff-AA}
\mathcal{I}\geq{\lambda_p}\iint_{[0,r]\times B_{2r}}|u|^{p-2}|\nabla u|^2 x_0\zeta\,dx'\,dx_0,
\end{equation}
where we agree henceforth to abbreviate $B_{2r}:=B_{2r}(y')$ whenever convenient.

The idea now is to integrate by parts the formula for $\mathcal I$ in order 
to relocate the $\partial_i$ derivative. This gives 
\begin{align}\label{I+...+IV}
\mathcal{I}
&= \mathscr{R}e\,\int_{\partial\left[(0,r)\times B_{2r}\right]} 
A_{ij}\partial_{j}u|u|^{p-2}\overline{u}x_0\zeta\nu_{x_i}\,d\sigma 
\nonumber\\[4pt]
&\quad - \mathscr{R}e\,\iint_{[0,r]\times B_{2r}}\partial_{i}\left(A_{ij} 
\partial_{j}u\right)|u|^{p-2}\overline{u}x_0\zeta\,dx'\,dx_0 
\nonumber\\[4pt]
&\quad - \mathscr{R}e\,\iint_{[0,r]\times B_{2r}}A_{ij}\partial_{j}{u}|u|^{p-2}\overline{u}\partial_{i}x_0\zeta\,dx'\,dx_0 
\nonumber\\[4pt]
&\quad - \mathscr{R}e\,\iint_{[0,r]\times B_{2r}}A_{ij}\partial_{j}u|u|^{p-2}\overline{u}x_0\partial_{i}\zeta\,dx'\,dx_0
\nonumber\\[4pt]
&=:I+II+III+IV,
\end{align}
where $\nu$ is the outer unit normal vector to $(0,r)\times B_{2r}$. The boundary term $I$ does not vanish
only on the set $\{r\}\times B_{2r}$ and only when $i=0$. This gives
\begin{equation}\label{cutoff-BBB}
I=\mathscr{R}e\,\int_{\{r\}\times B_{2r}} 
A_{0j}\partial_{j}u|u|^{p-2}\overline{u}x_0\zeta\,d\sigma 
\end{equation}

As $u$ is a weak solution of  $\mathcal L u=0$ in $\Omega$, we use the equation to transform $II$ into
\begin{equation}\label{cutoff-CCC}
II=\mathscr{R}e\,\iint_{[0,r]\times B_{2r}}B_{i}(\partial_{i}u)|u|^{p-2}\overline{u}x_0\zeta\,dx'\,dx_0.
\end{equation}
To further estimate this term we use H\"older's inequality, the Carleson condition for the term $B$ and 
Theorem~\ref{T:Car} in order to write
\begin{align}\label{TWO-TWO}
|II| &\leq\left(\iint_{[0,r]\times B_{2r}}\left|B\right|^{2} 
|u|^{p} x_0\zeta\,dx'\,dx_0\right)^{1/2}  
\cdot\left(\iint_{[0,r]\times B_{2r}}|u|^{p-2}|\nabla u|^{2}x_0\zeta\,dx'\,dx_0\right)^{1/2} 
\nonumber\\[4pt]
&\leq C(\lambda_p,\Lambda,p,n)\left(\|\mu'\|_{\mathcal{C}}\int_{B_{2r}} 
\left[\tilde{N}^r_{p,a}(u)\right]^{p}\,dx'\right)^{1/2}\cdot\mathcal{I}^{1/2}. 
\end{align}

As $\partial_ix_0=0$ for $i>0$ the term $III$ is non-vanishing only for $i=0$. We further split this term
by considering the cases when $j=0$ and $j>0$. This yields, since $A_{00} =1$, 

\begin{align}\label{u6fF}
III_{\{j=0\}} &=-\mathscr{R}e\,\iint_{[0,r]\times B_{2r}}\partial_{0}{u}|u|^{p-2}\overline{u}\zeta\,dx'\,dx_0 \nonumber\\
&=-\frac1p\iint_{[0,r]\times B_{2r}}  \partial_{0}(|u|^{p})\zeta\,dx'\,dx_0 \\
&=-\frac{1}{p}\int_{B_{2r}} |u|^p(r,x')\zeta\,dx' + \frac{1}{p}\int_{B_{2r}} |u|^p(0,x')\zeta\,dx' \nonumber
\end{align}

When $j>0$ we first use the fact that $A_{0j}$ is real and hence the expression $\mathscr{R}e\, [{A}_{0j}\, (\partial_{j}u)|u|^{p-2}\overline u]=p^{-1}A_{0j}\partial_j(|u|^p)$. Then we 
reintroduce $1=\partial_0x_0$  and integrate by parts moving the $\partial_0$ derivative
\[\begin{split}
III_{\{j \neq 0\}}
&= -\mathscr{R}e\, \iint_{[0,r]\times B_{2r}} {A}_{0j}\, \partial_{j}u|u|^{p-2} \, \overline{u} \,  \zeta \,dx'\,dx_0 \\
& -p^{-1}\, \iint_{[0,r]\times B_{2r}} {A}_{0j}\, \partial_{j}(|u|^{p}) \, \left(\partial_{0} x_0\right) \zeta \,dx'\,dx_0 \\
&= p^{-1}\int_{B_{2r}} {A}_{0j} \partial_{j}(|u|^p)(r,x') r \zeta \,dx' + p^{-1}\iint_{[0,r]\times B_{2r}} \partial_{0}{A}_{0j} \partial_{j}(|u|^p) x_0 \zeta \,dx'\,dx_0 \\
&+ p^{-1}\iint_{[0,r]\times B_{2r}} {A}_{0j} \partial^2_{0j}(|u|^p) x_0 \zeta \,dx'\,dx_0  \\
&= III_{1} + III_{2} + III_{3}.
\end{split}\]
We note that $III_{1} = - I_{\{j \neq 0\}}$.

In the third term $III_3$ we switch the order of derivatives $\partial^2_{0j} = \partial^2_{j0}$ and take further integration by parts with respect to $\partial_j$.
\[\begin{split}
III_{3}
&= - p^{-1}\iint_{[0,r]\times B_{2r}} \partial_{j} {A}_{0j} \partial_{0}(|u|^p) x_0 \zeta \,dx'\,dx_0 \\
&\quad - p^{-1}\iint_{[0,r]\times B_{2r}} {A}_{0j}\partial_{0}(|u|^p) x_0 (\partial_{j}\zeta) \,dx'\,dx_0 = III_{31} + III_{32}.
\end{split}\]

The terms $III_2$ and $III_{31}$ are of the same type as $II$ we have handled earlier and hence have the same estimate
\[
III_{2} + III_{31} \leq C(\lambda_p,\Lambda, p,n) \left( \|\mu'\|_{\mathcal{C}} \int_{ B_{2r}} \left[\tilde{N}^r_{p,a}(u)\right]^{p}\,dx'\right)^{1/2} \cdot \mathcal I^{1/2}
\]

We add up all terms we have so far to obtain
\begin{equation}\label{square01}\begin{split}
\mathcal I
&\leq p^{-1}\int_{B_{2r}}  \partial_{0}(|u|^p)(r,x') r \zeta \,dx' \\
&\quad  - {p}^{-1}\int_{B_{2r}} |u|^p(r,x')\zeta\,dx' + {p}^{-1}\int_{B_{2r}} |u|^p(0,x')\zeta\,dx'  \\
&\quad + C(\lambda_p,\Lambda,p,n) \|\mu'\|_{\mathcal{C}} \int_{B_{2r}} \left[\tilde{N}^{r}_{p,a}(u)\right]^p(u) \,dx' +\frac12\mathcal I \\
&\quad + III_{32}  + IV.
\end{split}\end{equation}

We have used the arithmetic-geometric inequality for expression bounding the term $II$ in \eqref{TWO-TWO} as well as for similar terms $III_2$ and $III_{31}$.

To obtain a global version of \eqref{square01}, consider a sequence of disjoint boundary balls 
$(B_r(y'_k))_{k\in\mathbb N}$ such that $\cup_{k}B_{2r}(y'_k) $ covers $\partial\Omega={\BBR}^{n-1}$ 
and consider a partition of unity $(\zeta_{k})_{k\in\mathbb N}$ subordinate to this cover. That is, 
assume $\sum_k \zeta_{k} = 1$ on ${\BBR}^{n-1}$ and each $\zeta_{k}$ is supported in $B_{2r}(y'_k)$. 
Write $IV_k$ for each term as the last expression in \eqref{I+...+IV} corresponding to 
$B_{2r}=B_{2r}(y'_k)$. Given that $\sum_k \partial_i\zeta_{k} = 0$ for each $i$, by summing 
\eqref{square01} over all $k$'s gives $\sum_{k} IV_k= 0$. The same observation applies to the terms arising in $III_{32}$.
It follows that
\begin{align}\label{square02}
&\hskip -0.20in 
\frac{\lambda_p}{2}\iint_{[0,r]\times{\BBR}^{n-1}}|\nabla u|^2|u|^{p-2}\,x_0\,dx'\,dx_0 \leq
\nonumber\\[4pt]
&\hskip 0.20in
p^{-1}\int_{{\BBR}^{n-1}}  \partial_{0}(|u|^p)(r,x') r  \,dx' \nonumber\\
&\quad  - {p}^{-1}\int_{{\BBR}^{n-1}} |u|^p(r,x')\,dx' + {p}^{-1}\int_{{\BBR}^{n-1}} |u|^p(0,x')\,dx'  
\nonumber\\[4pt]
&\quad +C\|\mu'\|_{\mathcal{C}}\int_{{\BBR}^{n-1}}\left[\tilde{N}^{r}_{p,a}(u)\right]^p\,dx'.
\end{align}
We have established \eqref{square02} for $r \leq h$, but we 
now observe that \eqref{square02} holds also for $r>h$, as $u = 0$ when $r \geq h$.
To see this, note that when $r=h$, the second term on the right hand side of the inequality is negative, and the third term is zero.
From this, \eqref{S3:L4:E00} follows by integrating \eqref{square02} in $r$ on
$[0,r']$ and dividing by $r'$.
\end{proof}

Lemma~\ref{S3:L4}, and its proof, yields several important corollaries. 

\begin{corollary}\label{S4:C2} 
Under the assumptions of Lemma \ref{S3:L4} we have for such $u$:
\begin{equation}\label{S3:L4:E00bb}
\lambda'_p\iint_{\BBR^n_+}|\nabla u|^{2}|u|^{p-2}x_0\,dx'\,d x_0 
\leq\int_{\BBR^{n-1}}|f(x')|^{p}\,dx'
+C\|\mu'\|_{\mathcal{C}}\int_{\BBR^{n-1}}\left[\tilde{N}_{p,a}(u)\right]^{p}\,dx'.
\end{equation}
Furthermore, under the same assumptions, if $g:{\mathbb R}^{n-1}\to{\mathbb R}^+$ is a Lipschitz function with small Lipschitz norm for any $\Delta\subset{\mathbb R}^{n-1}$ such that $\sup_\Delta g\le d/2$ where $d=\mbox{diam}(\Delta)$ we also have the following local estimate
\begin{equation}\label{eq5.15}
\iint_{\Omega_g\cap T(\Delta)}|\nabla u|^{2}|u|^{p-2}\delta_g(x)\,dx 
\leq C\int_{2\Delta}\left(|u(g(x'),x')|^p+(1+\|\mu\|_{\mathcal{C}})\left[\tilde{N}^{2d}_{p,a,g}(u)\right]^{p}\right)\,dx'.
\end{equation}
Here $\tilde{N}^{2d}_{p,a,g}$ is the truncated version of the nontangential maximal function defined in \eqref{eq-Nh}
with respect to the domain $\Omega_g=\{x_0>g(x')\}$ and $\delta_g$ measures the distance of a point to the boundary of $\Omega_g$. 
\end{corollary}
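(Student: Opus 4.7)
The first estimate \eqref{S3:L4:E00bb} is obtained by sending $r \to \infty$ in \eqref{S3:L4:E00}. Since $u$ is extended by zero for $x_0 \geq h$, the boundary term $\int |u(r,x')|^p\,dx'$ vanishes once $r > h$, the nonnegative term $\frac{2}{r}\iint_{[0,r]\times\partial\Omega}|u|^p\,dx'\,dx_0$ tends to $0$, and by monotone convergence $\tilde N^{r}_{p,a}(u)$ increases to $\tilde N_{p,a}(u)$ while the main LHS integral extends to the full integral over $\BBR^n_+$. The claim follows with $\lambda'_p = p\lambda_p/2$.

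For the local estimate \eqref{eq5.15}, the plan is to rerun the integration-by-parts scheme of Lemma \ref{S3:L4} on the subdomain $\Omega_g \cap T(2\Delta)$ with weight $\chi(x) = \delta_g(x)\,\zeta(x)$, where $\zeta$ is a smooth spatial cutoff that equals $1$ on $T(\Delta)$, is supported in $T(2\Delta)$, and satisfies $|\nabla\zeta| \lesssim 1/d$. One analyzes
$$\mathcal{I} := \mathscr{R}e\iint_{\Omega_g \cap T(2\Delta)} A_{ij}\,\partial_j u\,\partial_i(|u|^{p-2}\overline{u})\,\delta_g(x)\,\zeta(x)\,dx,$$
which by Theorem \ref{MTdiss} bounds the left side of \eqref{eq5.15} by a constant multiple of $\mathcal{I}$. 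Integrating by parts splits $\mathcal{I}$ into four pieces analogous to I--IV in the proof of Lemma \ref{S3:L4}. The boundary term I now has a nontrivial contribution from the graph $\partial\Omega_g \cap T(2\Delta)$: using that the outer unit normal there is $(1,-\nabla g)/\sqrt{1+|\nabla g|^2}$ with $\|\nabla g\|_\infty$ small, reality of $A_{0j}$, and the same rewriting that converted $III_{\{j\neq 0\}}$ in the original proof, this produces exactly $\int_{2\Delta}|u(g(x'),x')|^p\,dx'$ plus errors proportional to $L$. Term II is the $B$-contribution from $\mathcal{L}u = 0$, estimated by Cauchy--Schwarz and Theorem \ref{T:Car}. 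Term III, where one now encounters $\nabla\delta_g = (1,-\nabla g)$ instead of $(1,0,\dots,0)$, differs from the original only by harmless terms weighted by the small factor $\nabla g$.

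The principal new difficulty is that the global partition-of-unity cancellation which in Lemma \ref{S3:L4} reduced $|\nabla A|^2$ to the sharper combination $|\partial_0 A_{0j}|^2 + |\sum_j \partial_j A_{0j}|^2$ (thus producing $\mu'$) is no longer available locally: the term IV and the analogue of $III_{32}$, both carrying $\nabla\zeta$, no longer telescope away. These terms are supported on the shell $T(2\Delta)\setminus T(\Delta)$, where $|\nabla\zeta| \lesssim 1/d$ and $\delta_g \lesssim d$; H\"older's inequality together with interior Caccioppoli (Lemma \ref{LpAvebig}) and the definition of $\tilde N^{2d}_{p,a,g}$ bound their contribution by a multiple of $\int_{2\Delta}[\tilde N^{2d}_{p,a,g}(u)]^p\,dx'$. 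The same loss of cancellation forces us to use the full Carleson measure bounding $|\nabla A|^2 + |B|^2$, i.e.\ $\|\mu\|_{\mathcal{C}}$ rather than $\|\mu'\|_{\mathcal{C}}$, yielding the factor $(1+\|\mu\|_{\mathcal{C}})$. The $\mathcal{I}^{1/2}$ self-terms produced by the Cauchy--Schwarz estimates on II and on the $\nabla g$-contributions are absorbed into the left side using the smallness of $L$; this step is the main technical obstacle and is the reason the Lipschitz constant for $g$ must be small.
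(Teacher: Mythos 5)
The first half of your proposal is fine and is exactly the paper's argument: let $r\to\infty$ in \eqref{S3:L4:E00}, use that $u$ vanishes above height $h$, and discard the nonnegative second term on the left.

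For the local estimate \eqref{eq5.15} your route is genuinely different from the paper's, and as described it has a gap. The paper does \emph{not} run the integration by parts on the Lipschitz subdomain with weight $\delta_g\zeta$: it first treats the flat case $g=0$ with the same weight $x_0\zeta$ as in Lemma \ref{S3:L4}, observing that the cutoff terms $III_{32}$ and $IV$ (which no longer cancel) still carry the weight $x_0$, so Cauchy--Schwarz gives $\varepsilon\mathcal{I}+C_\varepsilon\int_{B_{2r}}[\tilde N^{r}_{p,a}(u)]^p\,dx'$; the general Lipschitz case is then reduced to the flat one by the pullback map \eqref{E:rho}, integrating up to $r'(x')=\min(r,G(x'))$ where $u$ vanishes. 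Your direct scheme runs into two problems. First, with weight $\chi=\delta_g\zeta$ the boundary integral $I$ over the graph vanishes identically, since $\delta_g=0$ there; so the data term $\int_{2\Delta}|u(g(x'),x')|^p\,dx'$ cannot be produced the way you claim (via the unit normal and reality of $A_{0j}$ in $I$). It must instead come from the weight-derivative term, i.e.\ the analogue of $III_{\{j=0\}}$, after integrating $\partial_0(|u|^p)$ in $x_0$ down to $x_0=g(x')$. Second, and more seriously, the tangential components of $\nabla\delta_g$ generate terms of the form
\begin{equation*}
\mathscr{R}e\iint_{\Omega_g\cap T(2\Delta)} A_{ij}\,\partial_j u\,|u|^{p-2}\overline{u}\,(\partial_i g)\,\zeta\,dx,\qquad i\neq 0,
\end{equation*}
in which the weight $\delta_g$ has been differentiated away. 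These are not ``harmless terms weighted by the small factor $\nabla g$'': Cauchy--Schwarz against $\mathcal{I}^{1/2}$ forces the companion factor $\bigl(\iint |u|^{p}\,\delta_g^{-1}\zeta\,dx\bigr)^{1/2}$, which is \emph{not} controlled by $\int_{2\Delta}[\tilde N^{2d}_{p,a,g}(u)]^p\,dx'$ (the $\delta_g^{-1}$ integral diverges logarithmically in the normal direction), and the structural trick $\mathscr{R}e\,[A_{0j}\partial_j u\,|u|^{p-2}\overline u]=p^{-1}A_{0j}\partial_j(|u|^p)$ that saves row $0$ is unavailable for rows $i\neq 0$, whose entries are genuinely complex. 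Smallness of $L$ multiplies an uncontrolled quantity and therefore cannot be used for absorption. This is precisely the obstruction the paper avoids by flattening the graph with the pullback (for which the weight is exactly $x_0$ and $\partial_i x_0=0$ for $i\neq 0$), at the price of transforming the coefficients, whose Carleson property is preserved. Your treatment of the cutoff terms $IV$ and $III_{32}$ via Cauchy--Schwarz and the nontangential maximal function is consistent with the paper's estimates \eqref{eq5.16}--\eqref{eq5.17}, and using $\|\mu\|_{\mathcal C}$ rather than $\|\mu'\|_{\mathcal C}$ there is harmless; but the graph-case argument needs to be rerouted through the pullback (or some substitute for the missing weight) before the proof is complete.
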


\begin{proof} The first claim follows immediately from Lemma~\ref{S3:L4} by taking $r\to \infty$ since $\int_{\partial\Omega}|u(r,x')|^{p}\,dx' = 0$ when $r>h$.

The second claim can be seen as follows. In the case when the function $g(x')=0$, 
one proceeds exactly as in the proof of Lemma~\ref{S3:L4} above until  
\eqref{square01}. Then instead of summing over different balls $\Delta=B_r$ covering $\mathbb R^{n-1}$ we estimate the terms $III_{32}$ and $IV$. Both of these terms are of the same type and can be bounded (up to a constant) by
\begin{equation}\label{eq5.16}
\iint_{[0,r]\times B_{2r}}|\nabla u||u|^{p-1}x_0|\partial_T\zeta|dx'dx_0,
\end{equation}
where $\partial_T\zeta$ denotes any of the derivatives in the direction parallel to the boundary. Recall that $\zeta$ is a smooth cutoff function equal to $1$ on $B_r$ and $0$ outside $B_{2r}$. In particular, we may assume $\zeta$ to be of the form $\zeta=\eta^2$ for another smooth function $\eta$ such that $|\nabla_T\eta|\le C/r$. By Cauchy-Schwarz \eqref{eq5.16} can be further estimated by
\begin{equation}\label{eq5.17}
\left(\iint_{[0,r]\times B_{2r}}|\nabla u|^2|u|^{p-2}x_0(\eta)^2dx'dx_0\right)^{1/2}\left(\iint_{[0,r]\times B_{2r}}|u|^{p}x_0|\nabla_T\eta|^2dx'dx_0\right)^{1/2}
\end{equation}
\begin{equation}
\lesssim{\mathcal I}^{1/2}\left(\frac1r\iint_{[0,r]\times B_{2r}}|u|^pdx'dx_0\right)^{1/2}\le \varepsilon{\mathcal I}+C_\varepsilon\int_{B_{2r}}\left[\tilde{N}^r_{p,a,g}(u)\right]^{p}\,dx'.\nonumber
\end{equation}
In the last step we have used the AG-inequality and a straightforward estimate of the solid integral $|u|^p$ by the $p$-averaged nontangential maximal function. Substituting \eqref{eq5.17} into \eqref{square01} the estimate \eqref{eq5.15} follows by integrating in $r$ over $[0,r']$ and dividing by $r'$ exactly as done above. We note that, by the second part of Theorem \ref{T:Car}, we can use in the estimates $\tilde{N}_{p,a}$ defined as in \eqref{eq-Nh}.

In the general case, for $g$ Lipschitz with small constant, we use the pullback map to again work on ${\mathbb R}^n_+$. Recall that we working in
the infinite strip $\Omega^h$, but this will transform under this mapping.  Let $G(x')$ be the image of 
the line $x_0 =h$ under this pullback. Instead of integrating on $[0.r]$ in
Lemma~\ref{S3:L4}, we integrate first in $x'$ and then on $[0.r'(x')]$, where $r' = \text{min}(r, G(x'))$.
Since $u$ vanishes at $(x', G(x'))$, there will still be no contribution from boundary integrals when integrating derivatives $\partial_j u$, $j<n-1$, and
the argument goes through as before.

\end{proof}

\begin{lemma}\label{LGL2} 
Let $\Omega=\BBR^n_+$ and assume $u$ be the energy solution of \eqref{E:D-energy}  Assume that 
$A$ is $p$-elliptic and smooth in $\BBR^n_+$ with $A_{00} =1$ and $A_{0j}$ real and that the measure $\mu$ defined as in \eqref{Car_hatAA} is Carleson.

Consider any $b>a>0$. Then for each $\gamma\in(0,1)$ there exists a constant $C(\gamma)>0$ 
such that $C(\gamma,a,b)\to 0$ as $\gamma\to 0$ and with the property that for each $\nu>0$ and 
each energy solution $u$ of \eqref{E:D} there holds 
\begin{align}\label{eq:gl2}
&\hskip -0.20in 
\left|\Big\{x'\in {\BBR}^{n-1}:\,S_{p,a}(u)(x')>\nu,\, \tilde{N}_b(u)(x')\le\gamma \nu\Big\}\right|
\nonumber\\[4pt] 
&\hskip 0.50in
\quad\le C(\gamma)\left|\big\{x'\in{\BBR}^{n-1}:\,{S}_{p,b}(u)(x')>\nu/2\big\}\right|.
\end{align}
\end{lemma}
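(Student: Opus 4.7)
The plan is to run a standard good-$\lambda$ argument.  Since $a<b$ forces $\Gamma_a(x')\subset\Gamma_b(x')$, one has $S_{p,a}(u)\le S_{p,b}(u)$ pointwise, so the set
$$E:=\big\{x'\in{\BBR}^{n-1}:\,S_{p,a}(u)(x')>\nu,\ \tilde N_b(u)(x')\le\gamma\nu\big\}$$
is contained in the open set $F:=\{S_{p,b}(u)>\nu/2\}$.  The first step is a Whitney decomposition $F=\bigsqcup_j Q_j$ with $\ell(Q_j)\sim\mathrm{dist}(Q_j,F^c)$, which reduces the claim to a local estimate $|E\cap Q_j|\le C(\gamma)|Q_j|$ on each cube.

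Fix $Q_j$ of sidelength $\ell_j$ and pick $x_j^*\in F^c$ with $\mathrm{dist}(x_j^*,Q_j)\sim\ell_j$; if $E\cap Q_j$ is nonempty, also fix a ``good'' point $x_j^{**}\in Q_j$ with $\tilde N_b(u)(x_j^{**})\le\gamma\nu$.  Choose a constant $K=K(a,b)$ large enough that $\Gamma_a(x')\cap\{\delta>K\ell_j\}\subset\Gamma_b(x_j^*)$ for every $x'\in Q_j$.  Splitting the cone defining $S_{p,a}(u)(x')^2$ at height $K\ell_j$, the ``tail'' is dominated by $S_{p,b}(u)(x_j^*)^2\le \nu^2/4$, so for $x'\in E\cap Q_j$ the truncated piece satisfies $S_{p,a}^{K\ell_j}(u)(x')^2>3\nu^2/4$.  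Chebyshev and Fubini then give
$$|E\cap Q_j|\le \frac{4}{3\nu^{2}}\int_{Q_j}S_{p,a}^{K\ell_j}(u)(x')^2\,dx'\lesssim \nu^{-2}\iint_{\mathcal T_j}|\nabla u|^2|u|^{p-2}\delta\,dx,$$
where $\mathcal T_j:=\bigcup_{x'\in Q_j}\Gamma_a^{K\ell_j}(x')$ is a sawtooth of height $\sim K\ell_j$ above $Q_j$.

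I would then invoke the localized $p$-adapted square-function bound \eqref{eq5.15} of Corollary \ref{S4:C2} applied to $\mathcal T_j$, viewing the top of the sawtooth as a Lipschitz graph with small Lipschitz constant.  This bounds the solid integral above by $\int_{2Q_j}\big(|u(g(x'),x')|^p+(1+\|\mu\|_{\mathcal C})[\tilde N^{2\ell_j}_{p,a,g}(u)]^p\big)dx'$.  The geometry is arranged so that the graph of $g$ and the auxiliary cones defining $\tilde N_{p,a,g}$ all lie inside $\Gamma_b(x_j^{**})$, hence every $L^p$ average of $u$ appearing is bounded by $\tilde N_b(u)(x_j^{**})\le\gamma\nu$ (using Theorem \ref{Regularity} to move freely between $L^p$ and $L^2$ averages).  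This yields
$$\iint_{\mathcal T_j}|\nabla u|^2|u|^{p-2}\delta\,dx\lesssim (\gamma\nu)^p(1+\|\mu\|_{\mathcal C})|Q_j|,$$
and summing over $j$ completes the estimate.

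Two technical points deserve extra care.  First, \eqref{eq5.15} is stated for regions \emph{above} a Lipschitz graph, while $\mathcal T_j$ sits \emph{below} one, so I would either reparametrize via a pullback (preserving $p$-ellipticity and the smallness of the Carleson norm) or cover $\mathcal T_j$ by finitely many slabs each fitting the hypothesis of Corollary \ref{S4:C2}.  The second, and more delicate, obstacle is that the direct estimate above yields $|E\cap Q_j|\lesssim\gamma^p\nu^{p-2}(1+\|\mu\|_{\mathcal C})|Q_j|$, which is independent of $\nu$ only when $p=2$.  The natural remedy, consistent with the $p$-adapted framework developed earlier in the paper, is to recast the argument for the auxiliary function $v:=|u|^{p/2-1}u$: by Lemma \ref{l2.8}, $|\nabla v|^2\approx |u|^{p-2}|\nabla u|^2$, so $S_{p,a}(u)\approx S_{2,a}(v)$, while $L^p$ averages of $u$ are $L^2$ averages of $|v|$ — matching the homogeneity so that the Chebyshev step produces a clean $\gamma^2|Q_j|$ uniformly in $\nu$, and hence $C(\gamma)\to 0$ as $\gamma\to 0$.
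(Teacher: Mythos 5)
Your skeleton is the same as the paper's (Whitney decomposition of $\{S_{p,b}(u)>\nu/2\}$, truncation of the $a$-cones at the Whitney scale using a nearby point where $S_{p,b}(u)\le\nu/2$, Chebyshev plus Fubini to pass to a solid integral of $|\nabla u|^2|u|^{p-2}\delta$ over a sawtooth, then the local estimate \eqref{eq5.15}), but your sawtooth is built over the whole Whitney cube, $\mathcal T_j=\bigcup_{x'\in Q_j}\Gamma_a^{K\ell_j}(x')$, and you try to control everything by a single good point $x_j^{**}$. Two things fail there. First, the lower boundary of $\mathcal T_j$ contains all of $Q_j\times\{0\}$, so the boundary-trace term $\int |u(g(x'),x')|^p\,dx'$ produced by \eqref{eq5.15} contains $\int_{Q_j}|f|^p\,dx'$, which is not controlled by $\gamma\nu$. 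Second, the containment claim that the auxiliary cones defining $\tilde N^{2d}_{p,a,g}$ all lie in $\Gamma_b(x_j^{**})$ is false: an aperture-$a$ cone with vertex $(0,z')$, $z'\in Q_j$ at horizontal distance $\sim\ell_j$ from $x_j^{**}$, contains points at height $\varepsilon\ll\ell_j$ which belong to no cone with vertex $x_j^{**}$, however $b$ compares with $a$. The paper avoids both problems by building the sawtooth over the good set itself, $\Omega_c=\bigcup_{x'\in F^i_\nu}\Gamma_c(x')$ with $a<c<b$ (this is still a region above a Lipschitz graph, so no reflection issue arises): every vertex on $\partial\Omega_c$ then has height comparable to its distance to $F^i_\nu$, hence each $a$-cone from $\partial\Omega_c$ sits inside some $\Gamma_b(x')$ with $x'\in F^i_\nu$, where $\tilde N_b(u)\le\gamma\nu$; the remaining trace term on $\partial\Omega_c$ is converted into a solid integral by averaging over $c\in[a,b]$ and is again dominated by $(\gamma\nu)^p|2\Delta_i|$.

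The homogeneity issue you flag is real, but your remedy proves a different statement from the lemma. The hypothesis is $\tilde N_b(u)\le\gamma\nu$, a bound on averages of $u$, not of $|u|^{p/2}$. Passing to $v=|u|^{p/2-1}u$ only changes notation: \eqref{eq5.15} is exactly the $L^2$ estimate for $v$, and under the actual hypothesis its right-hand side is still $\lesssim(1+\|\mu\|_{\mathcal C})(\gamma\nu)^p|Q_j|$, so a Chebyshev at level $\nu^2$ still returns $\gamma^p\nu^{p-2}|Q_j|$; to get your clean $\gamma^2$ you would need $\tilde N_{2,b}(v)\le\gamma\nu$, i.e. $[\tilde N_{p,b}(u)]^{p/2}\le\gamma\nu$, which is not assumed. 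The paper keeps the homogeneity matched at the Chebyshev step instead: since $S^{d}_{p,a}(u)>\nu/2$ on $F^i_\nu$, it writes $|F^i_\nu|\le 2^p\nu^{-p}\int_{F^i_\nu}[S^{d}_{p,a}(u)]^p\,dx'$ and bounds this by $\nu^{-p}$ times the sawtooth integral, so that $\nu^{-p}$ cancels $(\gamma\nu)^p$ and $C(\gamma)\sim\gamma^p$ comes out uniform in $\nu$. As written, your argument yields the inequality only with a constant of the form $C\gamma^p\nu^{p-2}$, or else the good-$\lambda$ inequality for the homogeneity-matched pair $S_{2,a}(v)$, $\tilde N_{2,b}(v)$, neither of which is the statement of the lemma.
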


\begin{proof} 
We observe that $\big\{x'\in{\BBR}^{n-1}:\,{S}_{p,b}(u)(x')>\nu/2\}$ is an open 
subset of ${\BBR}^{n-1}$. When this set is empty, or is all of ${\BBR}^{n-1}$, 
estimate \eqref{eq:gl2} is trivial, so we focus on the case when the set in question is 
both nonempty and proper. Granted this, we may consider a Whitney decomposition $(\Delta_i)_{i\in I}$ 
of it, consisting of open cubes in ${\mathbb{R}}^{n-1}$. Let $F_\nu^i$ be the set appearing on the 
left-hand side of \eqref{eq:gl2} intersected with $\Delta_i$.  Let $r_i$ be the diameter of $\Delta_i$. Due to the nature of the Whitney decomposition there exists a point $p'\in 2\Delta_i$ such that ${S}_{p,b}(u)(p')<\nu/2$.
From this and the fact that $b>a$ it follows that for all $x'\in F^i_\nu$ we have
$$S^{d}_{p,a}(u)(x')>\nu/2,$$
where $S^{d}_{p,a}$ is the truncated version of the square function at some height $d\approx r_i$, where the precise nature of relation between $d$ and $r_i$ depends on the apertures $a$ and $b$. 

For some $a<c<b$ consider the domain
$$\Omega_c=\bigcup_{x'\in F^i_\nu} \Gamma_c(x');$$
this is a Lipschitz domain with Lipschitz constant $1/c$. Observe that $F^i_\nu\subset \partial\Omega_c$.
It follows that
$$|F^i_\nu|\le \frac{2^p}{\nu^p}\int_{F^i_\nu}\left[S^{d}_{p,a}(u)(x')\right]^p\,dx'\lesssim \nu^{-p}\iint_{\Omega_c\cap T(\Delta_i)}|\nabla u|^2|u|^{p-2}\delta_c\,dx,$$
where $\delta_c$ measures the distance of a point to the boundary $\partial\Omega_c$. It follow by \eqref{eq5.15}
$$|F^i_\nu|\lesssim \nu^{-p}\int_{\partial\Omega_c\cap T(2\Delta_i)}\left(\left|u\big|_{\partial\Omega_c}\right|^p+(1+\|\mu\|_{\mathcal{C}})\left[\tilde{N}^{2d}_{p,a,c}(u)\right]^{p}\right)\,d\sigma,$$
where $\tilde{N}^{2d}_{p,a,c}$ is defined using nontangential cones with aperture $a$ with vertices on $\partial\Omega_c$. Due to the fact that each of these cones is contained in one of the cones $\Gamma_b(x')$
for some $x'\in F^i_\nu$ (as $c<b$) and on $F^i_\nu$: $\tilde{N}_b(u)(x')\le\gamma \nu$ we also have 
$\tilde{N}^{2d}_{p,a,c}(u)\le\gamma\nu$ everywhere on $\partial\Omega_c$. Thus we can conclude
$$|F^i_\nu|\lesssim \nu^{-p}\int_{\partial\Omega_c\cap T(2\Delta_i)}\left|u\big|_{\partial\Omega_c}\right|^p\,d\sigma+(1+\|\mu\|_{\mathcal{C}})\nu^{-p}(\gamma\nu)^p|2\Delta_i|.$$
We still need to deal with the first term on the righthand side. We convert this term into a solid integral by 
averaging $c$ over the interval $[a,b]$. Such solid integral has a trivial estimate by
$$C\int_{\partial\Omega_b\cap T(2\Delta_i)}\left[\tilde{N}^{2d}_{p,a,b}(u)\right]^{p}d\sigma\lesssim (\gamma\nu)^p|2\Delta_i|.$$
Hence using that the surface measure is doubling we finally get
$$|F^i_\nu|\le C(a,b,\|\mu\|_{\mathcal C})\gamma^p.$$
Summing over all $i$ yields \eqref{eq:gl2} as desired.
\end{proof}

We will require a localized version of Lemma \ref{LGL2} in order to deal with integrals of square functions and nontangential maximal
functions that are not a priori finite.

\begin{lemma}\label{LocalGoodL}
Let $u$, defined with respect to $\Omega^h$, and $A$, $a,b$, be as in Lemma \ref{LGL2}. Fix $R \geq h$ and consider
the boundary ball $\Delta_R \subset {\BBR}^{n-1}$. Let $p > q > 1.$ for any $q$ such that $A$ is $q$-elliptic.
Let 
$$\nu_0^p = C \dint_{\Delta_{2R}} N_b^p(u) dx',$$
where $C$ is a constant depending only on dimension (calculated in the proof below).
Then for each $\gamma\in(0,1)$ there exists a constant $C(\gamma)>0$ 
such that $C(\gamma,a,b)\to 0$ as $\gamma\to 0$ and with the property that for each $\nu>\nu_0$ 
\begin{align}\label{eq:gl2-2}
&\hskip -0.20in 
\left|\Big\{x'\in \Delta_R :\,S_{q,a}(u)(x')>\nu,\, \tilde{N}_b(u)(x')\le\gamma \nu\Big\}\right|
\nonumber\\[4pt] 
&\hskip 0.50in
\quad\le C(\gamma)\left|\big\{x'\in \Delta_R:\,{S}_{q,b}(u)(x')>\nu/2\big\}\right|.
\end{align}

\end{lemma}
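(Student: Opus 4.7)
The plan is to mirror the proof of Lemma~\ref{LGL2}, adapting the Whitney decomposition and the sawtooth-domain argument to work within the bounded ball $\Delta_R$ rather than all of $\mathbb{R}^{n-1}$. The threshold $\nu > \nu_0$ will serve to guarantee the existence of ``comparison points'' for Whitney cubes abutting $\partial\Delta_R$, since otherwise the a priori infiniteness of the relevant $L^p$-norms of $S_{q,b}(u)$ and $\tilde N_b(u)$ over unbounded regions could obstruct the argument.

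First I would let $E_\nu := \{x' \in \Delta_R : S_{q,b}(u)(x') > \nu/2\}$, which is an open subset of $\mathbb{R}^{n-1}$ contained in $\Delta_R$, and Whitney-decompose it into cubes $\{\Delta_i\}$ of diameter $r_i$. For each cube I will seek a comparison point $p'_i \in C\Delta_i$ with $S_{q,b}(u)(p'_i) \leq \nu/2$, which then forces the truncated inequality $S^{d_i}_{q,a}(u)(x') > \nu/2$ for all $x' \in F^i_\nu := \Delta_i \cap \{S_{q,a}(u) > \nu,\; \tilde N_b(u) \leq \gamma\nu\}$ at scale $d_i \approx r_i$, exactly as in Lemma~\ref{LGL2}. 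For ``interior'' Whitney cubes, where $C\Delta_i \subset \Delta_R$, such $p'_i$ is automatically provided by the Whitney property.

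For ``boundary'' Whitney cubes, where $C\Delta_i \not\subset \Delta_R$, the point $p'_i$ would lie outside $\Delta_R$ and it is not a priori true that $S_{q,b}(u)(p'_i) \le \nu/2$; this is where the hypothesis $\nu > \nu_0$ enters. The plan is to apply Corollary~\ref{S4:C2} on $T(\Delta_{2R})$ in combination with Theorem~\ref{Regularity} (to pointwise bound $|u|^{q-2}$ by $N_b(u)^{q-2}$ inside cones) and a Caccioppoli argument, yielding a bound of the form $\int_{\Delta_{2R}} S_{q,b}(u)^p\, dx' \lesssim \int_{\Delta_{2R}} N_b(u)^p\, dx'$. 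Chebyshev then gives $|\{x' \in \Delta_{2R}: S_{q,b}(u) > \nu/2\}| \leq \tfrac{1}{10}|\Delta_{2R}|$ as soon as $\nu^p > C\dint_{\Delta_{2R}} N_b^p\, dx'$ with a large enough dimensional constant $C$, which guarantees a suitable $p'_i \in C\Delta_i \cap \Delta_{2R}$ for each boundary Whitney cube.

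With the comparison points in place, the remainder of the argument follows Lemma~\ref{LGL2}: build the sawtooth domain $\Omega_c = \bigcup_{x' \in F^i_\nu} \Gamma_c(x')$ for some $a < c < b$ and apply the local square-function bound \eqref{eq5.15} on $\Omega_c \cap T(C\Delta_i)$. Because each cone in $\Omega_c$ is contained in some $\Gamma_b(x')$ with $x' \in F^i_\nu$, the nontangential maximal-function contribution on $\partial\Omega_c$ is bounded by $\gamma\nu$, and averaging over $c \in [a,b]$ converts the remaining surface term on $u$ into a solid integral controlled again by $(\gamma\nu)^q |\Delta_i|$; this yields $|F^i_\nu| \leq C(\gamma, a, b, \|\mu\|_{\mathcal C}) |\Delta_i|$, and summation in $i$ gives \eqref{eq:gl2-2}. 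The principal obstacle I anticipate is the $L^p$ upgrade of the previous paragraph: Corollary~\ref{S4:C2} is natively a mixed $L^2$--$L^q$ estimate, and bridging cleanly to an $L^p$ bound on $S_{q,b}(u)$ in terms of $N_b(u)$ will require care in combining the pointwise gradient bound from Theorem~\ref{Regularity} with Hardy--Littlewood maximal function arguments so as to avoid reintroducing finiteness assumptions on the global $L^p$-norms.
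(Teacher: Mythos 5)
Your overall skeleton (Whitney-decompose $\Delta_R\cap\{S_{q,b}>\nu/2\}$, secure a comparison point with $S_{q,b}\le\nu/2$ for each cube, then run the sawtooth argument of Lemma \ref{LGL2} verbatim) is the right one, and the final step is exactly as in the paper. The gap is in the mechanism you propose for securing the comparison points. The estimate $\int_{\Delta_{2R}} S_{q,b}(u)^p\,dx'\lesssim\int_{\Delta_{2R}} N_b(u)^p\,dx'$ with $p>q$, which you plan to extract from Corollary \ref{S4:C2}, Theorem \ref{Regularity} and a Caccioppoli/maximal-function argument, is precisely the content of Corollary \ref{S4:C1} with exponent above $q$ --- and in the paper that corollary is \emph{proved from} the present local good-$\lambda$ lemma, so you cannot invoke it (nor an ad hoc substitute you have not actually established) without circularity; you flag this yourself as the ``principal obstacle,'' and it is indeed unresolved in your sketch. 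No such $L^p$ bound is needed: the paper uses only the $L^q$ estimate $\|S_{q,b}(u)\|_{L^q(\Delta_R)}\lesssim\|N_b(u)\|_{L^q(\Delta_{2R})}$ from Corollary \ref{S4:C2}, then Chebyshev at exponent $q$, H\"older (using $q<p$) and Young's inequality to get $|\Delta_R\cap\{S_{q,b}>\nu/2\}|\le C_\varepsilon\,\nu^{-p}\int_{\Delta_{2R}}N_b^p\,dx'+\varepsilon|\Delta_R|$; choosing $\varepsilon=1/4$ and $\nu>\nu_0$ (this is exactly why $\nu_0$ is defined through the $L^p$ average of $N_b$ while only an $L^q$ square-function bound is used) yields $|\Delta_R\cap\{S_{q,b}>\nu/2\}|<\tfrac12|\Delta_R|$.

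There is a second, independent flaw: even granting your $L^p$ bound, a global smallness statement such as $|\{x'\in\Delta_{2R}:S_{q,b}>\nu/2\}|\le\tfrac1{10}|\Delta_{2R}|$ does \emph{not} furnish a comparison point in $C\Delta_i\cap\Delta_{2R}$ for every ``boundary'' Whitney cube: a cube hugging $\partial\Delta_R$ may be arbitrarily small, and its fixed dilate can lie entirely inside $\{S_{q,b}>\nu/2\}$ without affecting the global measure bound. The correct use of the measure estimate is different: since the good set $\{x'\in\Delta_R: S_{q,b}\le\nu/2\}$ has measure at least $\tfrac12|\Delta_R|$, one takes the Whitney decomposition of $\Delta_R\cap\{S_{q,b}>\nu/2\}$ relative to (the closure of) that good set, so that for \emph{every} cube the dilate $2\Delta_i\cap\Delta_R$ automatically contains a point where $S_{q,b}\le\nu/2$; with that in hand, the truncated square-function comparison, the sawtooth domain $\Omega_c$, the local estimate \eqref{eq5.15}, and the averaging in $c\in[a,b]$ go through exactly as in Lemma \ref{LGL2}, as you describe.
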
 
\begin{proof}
By Corollary \ref{S4:C2},  $\|S_{q, b}(u)\|_{L^q(\Delta_R)} \lesssim \|N_b(u)\|_{L^q(\Delta_{2R})}.$ Therefore,

\begin{align}\label{Sqwithnu}
\hskip -0.20in 
\big| \Delta_R \cap \{S_q > \nu/2\} \big|    &\lesssim  \nu^{-q} \|N_b(u)\|^q_{L^q(\Delta_{2R})}\\
&
\lesssim \nu^{-q} \|N_b(u)\|^{q/p}_{L^p(\Delta_{2R})} \big| \Delta_{2R} \big|^{1-q/p}  \nonumber\\
&\lesssim    C_{\varepsilon}\nu^{-p} \int_{\Delta_{2R}} (N_b(u))^p + \varepsilon \big|\Delta_{R}\big|.
\end{align}
Choosing $\varepsilon = 1/4$, which determines $C_{\varepsilon}$,  and we now fix $C = 4C_{\varepsilon}$ in the definition of $\nu_0$.
This implies that for any $\nu>\nu_0$, we have that
$$\big| \Delta_R \cap \{S_{q,b} > \nu/2\} \big| < 1/2 \big| \Delta_R\big|.$$
Thus, there exists a Whitney decomposition of  $\Delta_R \cap \{S_{q,b} > \nu/2\}$ into open cubes $\Delta_i$ with the property that
$2\Delta_i \cap \Delta_R$ contains a point for which $S_{q,b}(u) < \nu/2.$
From this point on, the proof proceeds as in Lemma \ref{LGL2}.

\end{proof}

\begin{corollary}\label{S4:C1} Under the assumption of Lemma \ref{LGL2}, for any $q \geq p >1$ and $a>0$ there exists a finite 
constant $C=C(\lambda_p,\Lambda,p,q,a,\|\mu\|_{\mathcal C},n)>0$ such that 
\begin{equation}\label{S3:C7:E00oo=s}
\|S_{p,a}(u)\|_{L^{q}({\BBR}^{n-1})}\le C\|\tilde{N}_{p,a}(u)\|_{L^{q}({\BBR}^{n-1})}.
\end{equation}
The statement also holds for any $q > 0$, 
provided we know a priori that
\newline
$\|S_{p,a}(u)\|_{L^{q}({\BBR}^{n-1})} < \infty.$

\end{corollary}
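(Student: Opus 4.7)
The argument is a textbook good-$\lambda$ (distribution function) absorption, using Lemma \ref{LGL2} as its engine, combined with the classical change-of-aperture principle for $S_{p,a}$ and the aperture invariance of the $L^q$ norm of $\tilde N_{p,a}$ recorded in Proposition \ref{P3.5}. The only subtlety is establishing the a priori finiteness needed to legitimize the absorption when $q\ge p$; this is handled via the localized variant Lemma \ref{LocalGoodL}.

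\textbf{Step 1 (good-$\lambda$ integration).} Fix $a<b$ and assume, temporarily, that $\|S_{p,a}(u)\|_{L^q(\mathbb R^{n-1})}<\infty$. Split
\[
\{S_{p,a}(u)>\nu\}\subseteq \{S_{p,a}(u)>\nu,\ \tilde N_{p,b}(u)\le\gamma\nu\}\cup\{\tilde N_{p,b}(u)>\gamma\nu\},
\]
multiply by $q\nu^{q-1}$, integrate over $\nu\in(0,\infty)$, and apply Lemma \ref{LGL2} to the first piece and a change of variable $\nu\mapsto\gamma\nu$ to the second. This yields
\[
\|S_{p,a}(u)\|_q^q\le 2^q\,C(\gamma)\,\|S_{p,b}(u)\|_q^q+\gamma^{-q}\|\tilde N_{p,b}(u)\|_q^q .
\]

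\textbf{Step 2 (change of aperture and absorption).} Standard real-variable arguments (level-set / Whitney covering) give the aperture comparison $\|S_{p,b}(u)\|_q\le K_{a,b}\|S_{p,a}(u)\|_q$ for any $q>0$; Proposition \ref{P3.5} supplies the corresponding equivalence $\|\tilde N_{p,b}(u)\|_q\approx\|\tilde N_{p,a}(u)\|_q$. Inserting these into the previous display,
\[
\|S_{p,a}(u)\|_q^q\le 2^q C(\gamma) K_{a,b}^q\,\|S_{p,a}(u)\|_q^q+C\gamma^{-q}\|\tilde N_{p,a}(u)\|_q^q .
\]
Since $C(\gamma)\to 0$ as $\gamma\to 0^+$, choose $\gamma$ so small that $2^q C(\gamma)K_{a,b}^q<1/2$, then absorb to obtain \eqref{S3:C7:E00oo=s}. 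This proves the second (a priori) assertion of the corollary.

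\textbf{Step 3 (a priori finiteness for $q\ge p$).} To remove the a priori hypothesis when $q\ge p$, run the same distribution function argument on a boundary ball $\Delta_R$ with $R\ge h$ using the localized good-$\lambda$ inequality of Lemma \ref{LocalGoodL}. Integration against $q\nu^{q-1}$, this time over $\nu\in(\nu_0,\infty)$, together with the trivial estimate
\[
\int_0^{\nu_0}q\nu^{q-1}|\{S_{p,a}(u)>\nu\}\cap\Delta_R|\,d\nu\le \nu_0^q|\Delta_R|,
\]
gives, after the same aperture comparison and absorption,
\[
\|S_{p,a}(u)\|_{L^q(\Delta_R)}^q\le C\|\tilde N_{p,a}(u)\|_{L^q(\Delta_{2R})}^q+C\nu_0^q|\Delta_R|,
\]
with $\nu_0^p\lesssim \dashint_{\Delta_{2R}}\tilde N_{p,a}(u)^p$. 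Since the left-hand side of Lemma \ref{LocalGoodL} involves $S_{p,a}$ truncated to heights $\lesssim R$, and the smoothness of the coefficients on the strip $\Omega^h$ guarantees that $\|S_{p,a}(u)\|_{L^q(\Delta_R)}$ is finite for each fixed $R$, the absorption is legitimate on $\Delta_R$. Using $\|\tilde N_{p,a}(u)\|_{L^q(\mathbb R^{n-1})}<\infty$ (the right-hand side of the inequality we want to prove, assumed finite, else there is nothing to prove) and letting $R\to\infty$ by monotone convergence yields the uniform bound \eqref{S3:C7:E00oo=s} and, simultaneously, the a priori finiteness of $\|S_{p,a}(u)\|_{L^q(\mathbb R^{n-1})}$.

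\textbf{Expected main obstacle.} The purely distributional steps are routine; the delicate point is to produce the a priori finiteness for $q\ge p$ without circularity. This is exactly the role played by Lemma \ref{LocalGoodL}: its truncation threshold $\nu_0$, controlled by a local $L^p$ average of $\tilde N_{p,a}(u)$, allows one to absorb on $\Delta_R$ using only the finiteness of $\|S_{p,a}(u)\|_{L^q(\Delta_R)}$ for each fixed $R$ (which follows from smoothness on $\Omega^h$), and then to pass to the limit $R\to\infty$ with a constant independent of $R$.
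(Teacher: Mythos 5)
Your overall architecture is the same as the paper's: the global good-$\lambda$ inequality of Lemma \ref{LGL2} plus the \cite{CMS} change-of-aperture for $S_{p,a}$ and Proposition \ref{P3.5} for $\tilde N_{p,a}$ handle the ``a priori finiteness'' assertion (your Steps 1--2 are correct and are exactly what the paper intends for that case), and the localized good-$\lambda$ of Lemma \ref{LocalGoodL} with the threshold $\nu_0$ is the engine for $q\ge p$, with the term $\nu_0^q|\Delta_R|$ absorbed into $\|\tilde N\|_{L^q(\Delta_{2R})}^q$ via H\"older since $q\ge p$.

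There is, however, a genuine gap in Step 3, precisely at the point you flag as the main obstacle. You legitimize the absorption on $\Delta_R$ by asserting that smoothness of the coefficients on $\Omega^h$ guarantees $\|S_{p,a}(u)\|_{L^q(\Delta_R)}<\infty$ for each fixed $R$. Smoothness of $A,B$ gives only interior regularity of $u$; the boundary datum is merely in $\dot B^{2,2}_{1/2}\cap L^p$, and the cones in $S_{p,a}$ reach down to the boundary. Already for the Laplacian ($p=2$), finiteness of $\|S(u)\|_{L^q(\Delta_R)}$ would essentially force $f\in L^q_{loc}$, which is false for large $q$ when $f$ is only in $L^2\cap H^{1/2}$; so the local $L^q$ finiteness cannot be extracted from smoothness of the coefficients, and you cannot borrow it from $\|\tilde N_{p,a}(u)\|_{L^q}<\infty$ either without circularity. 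The paper sidesteps this by never claiming the full local $L^q$ norm is finite: it integrates the distribution function only over $\nu\in(0,M)$, where the truncated $q$-moment is bounded by $M^{q-p}$ times the $p$-moment of $S_{p,a}$ on $\Delta_R$ --- finite by Corollary \ref{S4:C2} combined with the smooth-coefficient a priori finiteness of $\|\tilde N_{p,a}(u)\|_{L^p}$ --- then performs the good-$\lambda$ absorption with constants independent of $M$, lets $M\to\infty$, and finally sums the local estimates over a cover of ${\mathbb R}^{n-1}$ by balls of size $R$. Inserting this $M$-truncation into your Step 3 (everything else in your argument can stay as is, including your $R\to\infty$ limit in place of the paper's summation) closes the gap.
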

\begin{proof}

This is a consequence of the good-$\lambda$ inequality established above and the equivalence (\cite{CMS}) of $p$-adapted square functions with different aperture in any $L^q$ norm.

When $q \geq p$, and $M$ is large,  
 $$ \int_0^M   \nu^{q-1}  \big| \Delta_R \cap \{S_{p,a}(u) > \nu\}\big| d\nu \leq C(M) \int_0^M   \nu^{p-1}  \big| \Delta_R \cap \{S_{p,a}(u) > \nu\}\big| d\nu.
 $$

 By Corollary \ref{S4:C2}, and the fact that the coefficients are smooth, the right hand side is bounded. Therefore, the left hand side is also bounded, with a constant that
 may depend on $M$. 
 
 Now we multiply the good-$\lambda$ inequality of
 Lemma \ref{LocalGoodL} by  $\nu^{p-1}$ and integrate separately over $(0, \nu_0)$ and $(\nu_0, M)$.
 This gives

$$ \|S_{p,a}(u)\|_{L^{q}({\Delta_{R}})}  \le C\|\tilde{N}_{p,a}(u)\|_{L^{q}({\Delta_{2R}})}, $$

\noindent after taking the limit as $M \to \infty$. 

The estimate \eqref{S3:C7:E00oo=s} follows after summing over a decomposition of ${\BBR}^{n-1}$ into balls of size $R$ and adding the local estimates.

When $q<p$, the local good-$\lambda$ inequality is not available, which is why we need the additional a priori estimate on the finiteness of 
$\|S_{p,a}(u)\|_{L^{q}({\BBR}^{n-1})}.$

\end{proof}

\section{Bounds for the nontangential maximal function by the $p$-adapted square function}
\label{SS:43}

As before, we work on $\Omega=\BBR^n_+$ and we assume that the matrix $A$ is $p$-elliptic.  
Our aim in this section is to establish the converse of the inequality in Corollary~\ref{S4:C1}. 
The approach necessarily differs from the usual argument in the real scalar elliptic case due to the fact 
that certain estimates, such as interior H\"older regularity of a weak solution, are unavailable for 
the complex coefficient case. Hence, alternative arguments bypassing such difficulties 
must be devised. We use here an adaptation of the approach developed for elliptic systems in \cite{DHM}.

Since any scalar complex valued PDE can be written as a real skew-symmetric system, the theorem of \cite{DHM}
can be applied here and we have the following result (c.f. Proposition 5.8 of \cite{DHM}).

\begin{proposition}\label{S3:C7a} 
Let $u$ be an arbitrary energy solution of \eqref{E:D} in $\Omega=\BBR^n_{+}$.
Assume that $A$ is elliptic and the measure $\mu$ defined as in 
\eqref{Car_hatAA} is Carleson with norm $\|\mu\|_{\mathcal C}<\infty$.
Then for any $q>0$ and  $a>0$ there exists a finite 
constant $C=C(\lambda,\Lambda,q,a,\|\mu\|_{\mathcal C},n)>0$ such that 
\begin{equation}\label{S3:C7:E00oo-xx}
\|\tilde{N}_{2,a}(u)\|_{L^{q}({\BBR}^{n-1})}\le C\|S_{2,a}(u)\|_{L^{q}({\BBR}^{n-1})}.
\end{equation}
\end{proposition}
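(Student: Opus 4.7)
The plan is to convert the complex scalar equation $\mathcal{L}u=0$ into an equivalent real $2\times 2$ divergence form system for the pair $(\mathscr{R}e\,u, \mathscr{I}m\,u)$, and then directly quote the nontangential maximal function estimate for elliptic systems proved in \cite{DHM}, as the paragraph preceding the statement explicitly suggests.

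First I would write $u = u_1 + iu_2$ with $u_1, u_2$ real, and split $A = A_R + iA_I$, $B = B_R + iB_I$. A direct computation shows that $\mathcal{L}u=0$ is equivalent to the real divergence form system $\tilde{\mathcal{L}}U = 0$ for the vector $U = (u_1,u_2)^T$, with $2\times 2$ block coefficient matrix
\begin{equation*}
\tilde{A} = \begin{pmatrix} A_R & -A_I \\ A_I & A_R \end{pmatrix}
\end{equation*}
and an analogously structured lower order term built from $B_R, B_I$. The block form above is precisely the matrix representation of multiplication by the complex number $A$ on $\mathbb{C}^n \cong \mathbb{R}^{2n}$, so it carries the skew-symmetric structure referred to in the paragraph preceding the statement.

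Next I would verify that $\tilde{\mathcal L}$ fits the framework of \cite{DHM}. The upper bound $|\langle A\xi,\eta\rangle|\le\Lambda|\xi||\eta|$ transfers verbatim to $\tilde{A}$, and the complex ellipticity $\mathscr{R}e\langle A\zeta,\zeta\rangle \ge \lambda|\zeta|^2$ applied to $\zeta=\xi+i\eta\in\mathbb{C}^n$ unfolds to give the Legendre--Hadamard-type ellipticity required by \cite{DHM} for $\tilde{A}$ with the same constant $\lambda$. Likewise, since
\begin{equation*}
|\nabla \tilde{A}|^2 \lesssim |\nabla A_R|^2 + |\nabla A_I|^2 = |\nabla A|^2,
\end{equation*}
and analogously for $\tilde{B}$, the Carleson condition \eqref{Car_hatAA} on $(A,B)$ immediately implies the same Carleson condition for $(\tilde{A},\tilde{B})$ with a comparable norm. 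Thus the hypotheses of Proposition~5.8 of \cite{DHM} are satisfied for $U$.

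Finally, observe that $|U(x)|^2 = u_1(x)^2 + u_2(x)^2 = |u(x)|^2$ and $|\nabla U(x)|^2 = |\nabla u_1(x)|^2 + |\nabla u_2(x)|^2 = |\nabla u(x)|^2$ pointwise, so the $L^2$-averaged nontangential maximal function $\tilde{N}_{2,a}(U)$ coincides with $\tilde{N}_{2,a}(u)$ and the square function $S_{2,a}(U)$ coincides with $S_{2,a}(u)$. Applying the systems $N \lesssim S$ inequality of \cite{DHM} to $U$ therefore yields \eqref{S3:C7:E00oo-xx} with a constant depending only on $\lambda,\Lambda,q,a,\|\mu\|_{\mathcal C}$ and $n$. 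The main obstacle (and the only genuinely non-cosmetic step) is the verification that \cite{DHM}'s form of system ellipticity is implied by the scalar complex ellipticity \eqref{EllipAA} once one passes to the block matrix $\tilde A$; this is an elementary but necessary algebraic check, already implicit in the observation quoted from \cite{DHM} at the start of Section~\ref{SS:43}.
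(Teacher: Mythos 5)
Your proposal is correct and is essentially the paper's own argument: the paper disposes of this proposition in one line by remarking that a scalar complex equation can be rewritten as a real (skew-symmetric) system and then citing Proposition~5.8 of \cite{DHM}, which is exactly the realification $U=(\mathscr{R}e\,u,\mathscr{I}m\,u)$ with block matrix $\bigl(\begin{smallmatrix} A_R & -A_I\\ A_I & A_R\end{smallmatrix}\bigr)$ that you spell out. Your verifications (ellipticity of the block system from \eqref{EllipAA}, transfer of the Carleson condition, and the pointwise identities $|U|=|u|$, $|\nabla U|=|\nabla u|$) are precisely the details left implicit in the paper, and they check out.
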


\begin{corollary}\label{S3:C7} Let $\Omega=\BBR^n_+$ and assume $u$ is a solution of \eqref{E:D-energy}.  Assume that 
$A$ is $p$-elliptic and smooth in $\BBR^n_+$ with $A_{00} =1$ and $A_{0j}$ real and that the measure $\mu$ defined as in \eqref{Car_hatAA} is Carleson.
Then for any $q>0$ and  $a>0$ there exists a finite 
constant $C=C(\lambda_p,\Lambda,p,q,a,\|\mu\|_{\mathcal C},n)>0$ such that 
\begin{equation}\label{S3:C7:E00oo}
\|\tilde{N}_{p,a}(u)\|_{L^{q}({\BBR}^{n-1})}\le C\|S_{p,a}(u)\|_{L^{q}({\BBR}^{n-1})},
\end{equation}
provided that a priori $\|\tilde{N}_{p,a}(u)\|_{L^{q}({\BBR}^{n-1})} < \infty$. If the dual exponent $p'>q$ we also have to assume that $\|S_{p',a}(u)\|_{L^{q}({\BBR}^{n-1})} < \infty$.

\end{corollary}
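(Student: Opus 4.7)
The plan is to prove \eqref{S3:C7:E00oo} via a good-$\lambda$ inequality dual to that of Lemma \ref{LGL2}: for apertures $b>a>0$ and every $\gamma\in(0,1)$ there is $C(\gamma)>0$ with $C(\gamma)\to 0$ as $\gamma\to 0$, such that for all $\lambda>0$,
\begin{equation}\label{eq:plan-goodlambda}
\left|\left\{x'\in\mathbb R^{n-1}:\tilde N_{p,a}(u)(x')>2\lambda,\;S_{p,b}(u)(x')\le\gamma\lambda\right\}\right|\le C(\gamma)\left|\left\{x':\tilde N_{p,b}(u)(x')>\lambda\right\}\right|.
\end{equation}
Once \eqref{eq:plan-goodlambda} is available, multiplying by $q\lambda^{q-1}\,d\lambda$ and integrating, together with the $L^q$ aperture equivalence of $\tilde N_{p,\cdot}$ from Proposition \ref{P3.5}, yields an inequality of the form
\[
\|\tilde N_{p,a}(u)\|_{L^q}^q\le \varepsilon\,\|\tilde N_{p,a}(u)\|_{L^q}^q+C_\varepsilon\,\|S_{p,b}(u)\|_{L^q}^q,
\]
and the a priori hypothesis $\|\tilde N_{p,a}(u)\|_{L^q}<\infty$ then permits absorption of the first term, producing \eqref{S3:C7:E00oo}.

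To prove \eqref{eq:plan-goodlambda} I would fix $\lambda>0$ and Whitney-decompose the open set $F_\lambda=\{\tilde N_{p,b}(u)>\lambda\}$ into cubes $\{\Delta_i\}$. On each cube I would construct a sawtooth region $\Omega_*^i$ as the union over $y'\in\Delta_i\cap F_\lambda^c$ of cones $\Gamma_c(y')$ for some intermediate aperture $c\in(a,b)$. By the definition of $F_\lambda$ and the aperture ordering, the $L^p$-average $w$ of $u$ is bounded by $\lambda$ on the lateral portion of $\partial\Omega_*^i$. The local Carleson-type estimate \eqref{eq5.15} of Corollary \ref{S4:C2} (which combines the $p$-dissipativity identity \eqref{SD} of Theorem \ref{MTdiss} with Theorem \ref{T:Car}) then produces an interior bound of the form
\[
\iint_{\Omega_*^i}|\nabla u|^2|u|^{p-2}\delta_*(x)\,dx+\iint_{\Omega_*^i}|u|^p\,d\nu\lesssim \lambda^p|\Delta_i|,
\]
where $\delta_*(x)$ denotes distance to $\partial\Omega_*^i$. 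Consequently, on the bad set in $\Delta_i$ the truncation of $S_{p,b}(u)$ at height $\sim\mathrm{diam}(\Delta_i)$ must exceed $c\lambda$; since the tail above that height is already bounded by $\gamma\lambda$ by hypothesis, Chebyshev yields \eqref{eq:plan-goodlambda}.

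The supplementary hypothesis $\|S_{p',a}(u)\|_{L^q}<\infty$ in the regime $p'>q$ becomes relevant when $p<2$: in that range the sawtooth step requires the duality substitution $w=|u|^{p-2}u$ used in the proof of Theorem \ref{MTdiss}, and the error terms it produces are naturally in $p'$-adapted form, so their a priori finiteness is what allows them to be absorbed. The main obstacle is the sawtooth estimate itself. The pointwise regularity tools of the real scalar theory (maximum principle, De Giorgi--Nash--Moser) are unavailable for complex coefficients, so every step must be carried out in terms of $L^p$-averages, with each quadratic form arising at integration by parts made positive by the $p$-dissipativity \eqref{SD}: this is the juncture at which $p$-ellipticity plays its essential role.
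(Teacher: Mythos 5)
Your plan hinges on a good-$\lambda$ inequality in the direction $\tilde N \lesssim S$, but the mechanism you offer for it does not work: the only local sawtooth estimate available in the paper, namely \eqref{eq5.15} of Corollary \ref{S4:C2} (and behind it Theorem \ref{MTdiss} and Theorem \ref{T:Car}), controls the $p$-adapted square function region integral \emph{by} the nontangential maximal function, i.e.\ it goes in the direction $S\lesssim N$. From the bound $\iint_{\Omega_*^i}|\nabla u|^2|u|^{p-2}\delta_*\,dx\lesssim \lambda^p|\Delta_i|$ you cannot conclude that $S_{p,b}(u)>c\lambda$ on the set where $\tilde N_{p,a}(u)>2\lambda$; what the good-$\lambda$ for $N\lesssim S$ actually requires is a local (weak-type or $L^2$) estimate of the truncated \emph{maximal function of averages} by the square function on the sawtooth, and this is precisely the ingredient that fails to follow from the classical real-variable argument here: with complex coefficients there is no interior H\"older continuity, Harnack inequality or maximum principle with which to compare values (or averages) of $u$ at different points of the cone against $\nabla u$. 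Your proposal never supplies a substitute for that comparison, so the key step \eqref{eq:plan-goodlambda} is unproved; as it stands the argument is circular in the sense that it deploys the $S\lesssim N$ machinery to deduce an $N\lesssim S$ statement.

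The paper's proof takes a different route entirely: it quotes Proposition \ref{S3:C7a}, the bound $\|\tilde N_{2,a}(u)\|_{L^q}\le C\|S_{2,a}(u)\|_{L^q}$ obtained from the elliptic-systems result of \cite{DHM} (a complex scalar equation being a real skew-symmetric system), and then transfers it to the $p$-adapted setting through the pointwise Cauchy--Schwarz inequality $[S_{2,a}(u)]^2\le S_{p,a}(u)\,S_{p',a}(u)$, followed by Corollary \ref{S4:C1} applied to the $p'$-adapted square function ($A$ being $p'$-elliptic as well) and the norm equivalences of Proposition \ref{P3.5}, finishing by dividing through by the a priori finite quantity $\|\tilde N_{p,a}(u)\|_{L^q}$. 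This also explains the side hypothesis you misinterpret: the assumption $\|S_{p',a}(u)\|_{L^q}<\infty$ when $p'>q$ is needed exactly so that Corollary \ref{S4:C1} can be invoked for $S_{p'}$ in the regime $q<p'$ (where the local good-$\lambda$ of Lemma \ref{LocalGoodL} is unavailable); it has nothing to do with error terms from the substitution $w=|u|^{p-2}u$ in Theorem \ref{MTdiss}. To repair your approach you would have to prove, from scratch, a sawtooth estimate bounding the truncated averaged maximal function by $S_{p}$ plus boundary data for complex coefficients, which is a substantial missing piece rather than a routine adaptation.
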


\begin{proof} Using H\"older's inequality we have for any $x'\in{\mathbb R}^{n-1}$
\begin{align}
&\hskip-5mm\left[S_{2,a}(u)(x')\right]^2=\iint_{\Gamma_a(x')}|\nabla u|^{2/p}| u|^{1-2/p}
|\nabla u|^{2/p'}| u|^{1-2/p'}x_0\,dx'\,dx_0\nonumber\\
&\le \left(\iint_{\Gamma_a(x')}|\nabla u|^2|\ u|^{p-2}
x_0\,dx'\,dx_0\right)^{1/p}\left(\iint_{\Gamma_a(x')}|\nabla u|^2| u|^{p'-2}
x_0\,dx'\,dx_0\right)^{1/p'}\label{eq-pf3a}\\
\nonumber &\le S_{p,a}(u)(x')S_{p',a}(u)(x').
\end{align}
It follows by Proposition \ref{S3:C7a} 
\begin{equation}\label{S3:C7:E00oo-xx2}
\|\tilde{N}_{2,a}(u)\|^2_{L^{q}({\BBR}^{n-1})}\le C\|S_{p,a}(u)\|_{L^{q}({\BBR}^{n-1})}\|S_{p',a}(u)\|_{L^{q}({\BBR}^{n-1})}.
\end{equation}
A $p$-elliptic matrix $A$ is also $p'$-elliptic and hence Corollary \ref{S4:C1} applies. This gives
\begin{equation}\label{S3:C7:E00oo-xx3}
\|S_{p',a}(u)\|_{L^{q}({\BBR}^{n-1})}\le C\|\tilde{N}_{p',a}(u)\|_{L^{q}({\BBR}^{n-1})}.
\end{equation}
Combining these two estimates with Proposition \ref{P3.5} we have
\begin{eqnarray}\label{S3:C7:E00oo-xx311}
\|\tilde{N}_{p,a}(u)\|^2_{L^{q}({\BBR}^{n-1})}&\lesssim&\|\tilde{N}_{2,a}(u)\|^2_{L^{q}({\BBR}^{n-1})}\\\nonumber
&\lesssim& \|S_{p,a}(u)\|_{L^{q}({\BBR}^{n-1})}\|\tilde{N}_{p',a}(u)\|_{L^{q}({\BBR}^{n-1})}\\\nonumber
&\lesssim& \|S_{p,a}(u)\|_{L^{q}({\BBR}^{n-1})}\|\tilde{N}_{p,a}(u)\|_{L^{q}({\BBR}^{n-1})}.
\end{eqnarray}
From this our claim follows.
\end{proof}

\section{Appendix: Boundary values of solutions with $\|\tilde N_{2,a}(u)\|_{L^p}<\infty$.}

The results in this section are of a general nature, and have applications to issues of  nontangential convergence of solutions in the boundary value
problems considered in this paper.

\smallskip

We start by considering an energy solution $u\in \dot{W}^{1,2}(\Omega;\mathbb C)$ of the Dirichlet boundary value problem \eqref{E:D}. Denote by 
$\tilde u:\Omega\to \mathbb C$ the averages
$$\tilde{u}(x)=\dint_{B_{\delta(x)/2}(x)} u(y)\,dy,\quad \forall x\in \Omega.$$ 
Clearly, $\tilde u$ is a continuous function on $\Omega$. We shall establish the following lemma.

\begin{lemma}\label{Traces} For $u\in \dot{W}^{1,2}(\Omega;\mathbb C)$ let $f=\text{\rm Tr }u$ be its trace on $\partial\Omega$ (which belongs to the space $\dot{B}^{2,2}_{1/2}(\partial\Omega;\mathbb C)$). 
 Then
\begin{equation}\label{Convpoint}
f(Q)=\lim_{x\to Q,\,x\in\Gamma(Q)}\tilde u(x),\qquad\text{for ${\mathcal H}^{n-1}$ a.e. }Q\in\partial\Omega.
\end{equation}
\end{lemma}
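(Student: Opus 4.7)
The plan is to derive the nontangential limit pointwise for $\sigma$-a.e.\ $Q\in\partial\Omega$ via a local Poincar\'e--trace estimate combined with Lebesgue differentiation on $\partial\Omega$. Observe first that $f\in\dot B^{2,2}_{1/2}(\partial\Omega)\subset L^1_{\mathrm{loc}}(\partial\Omega)$ by the Sobolev embedding, so $\sigma$-a.e.\ $Q\in\partial\Omega$ is a Lebesgue point of $f$. Moreover, since $|\nabla u|^2\in L^1(\Omega)$ and the tangential maximal operator
\[
g\longmapsto \sup_{r>0}\; r^{-(n-1)}\int_{B_r(\cdot)\cap\Omega}|g|\,dy
\]
is of weak-type $(1,1)$ from $L^1(\Omega)$ into $L^{1,\infty}(\partial\Omega,d\sigma)$ (via a standard Vitali covering argument on $\partial\Omega$), density of compactly supported continuous functions yields
\[
\lim_{r\to 0^+} r^{-(n-1)}\int_{B_r(Q)\cap\Omega}|\nabla u|^2\,dy = 0
\]
for $\sigma$-a.e.\ $Q$. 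Let $\mathcal G\subset\partial\Omega$ be the intersection of these two full-measure sets.

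Fix $Q\in\mathcal G$ and $x\in\Gamma(Q)$ with $t:=\delta(x)$. The aperture $a$ and the Lipschitz constant of $\partial\Omega$ determine a constant $C_0$ such that $B:=B_{C_0 t}(Q)\cap\Omega$ contains the ball $B_{\delta(x)/2}(x)$ while satisfying $|B|\approx t^n$ and $\sigma(B\cap\partial\Omega)\approx t^{n-1}$. From the definition of $\tilde u$,
\[
|\tilde u(x)-f(Q)|\le \dint_{B_{\delta(x)/2}(x)}|u-f(Q)|\,dy \lesssim \dint_B |u-f(Q)|\,dy,
\]
and a scale-invariant Poincar\'e-trace inequality on the Lipschitz domain $B$ bounds the right-hand side by
\[
C t\left(\dint_B |\nabla u|^2\right)^{1/2} + C\dint_{B\cap\partial\Omega}|f(P)-f(Q)|\,d\sigma(P).
\]

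The second term tends to zero as $t\to 0^+$ because $Q$ is a Lebesgue point of $f$. For the first term, using $|B|\approx t^n$,
\[
t\left(\dint_B|\nabla u|^2\right)^{1/2} \approx t^{1/2}\left(t^{-(n-1)}\int_B|\nabla u|^2\right)^{1/2},
\]
and both factors vanish in the limit: $t^{1/2}\to 0$, while the bracketed tangential density is $o(1)$ by construction of $\mathcal G$. Hence $\tilde u(x)\to f(Q)$ as $x\to Q$ inside $\Gamma(Q)$, for every $Q\in\mathcal G$. The main technical obstacle is the scale-invariant Poincar\'e-trace inequality on $B=B_{C_0 t}(Q)\cap\Omega$ with constants independent of $t$ and $Q$; this is handled by a bi-Lipschitz flattening of $\partial\Omega$ that reduces each such $B$ to a fixed reference half-ball, on which the classical Poincar\'e and trace inequalities apply with absolute constants.
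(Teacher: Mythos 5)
Your argument is correct, but it takes a genuinely different route from the paper's. You prove the nontangential convergence by a differentiation-theorem argument carried out directly on the Lipschitz domain: the weak-type $(1,1)$ bound (Vitali covering plus disjointness of the solid balls) for the boundary maximal operator $\sup_{r>0} r^{1-n}\int_{B_r(Q)\cap\Omega}|\nabla u|^2\,dy$ makes the Whitney-scale Dirichlet density vanish at $\sigma$-a.e.\ $Q$, Lebesgue differentiation handles the boundary term, and a scale-invariant Poincar\'e--trace inequality on $B_{C_0t}(Q)\cap\Omega$ ties the two together, even yielding a $t^{1/2}$ modulus at a.e.\ point; the needed uniformity of that inequality indeed holds, though your phrase about flattening onto ``a fixed reference half-ball'' is slightly loose (the flattened region is only comparable to a half-ball; what one really uses is that $B_r(Q)\cap\Omega$ is a Lipschitz domain with character depending only on $L$, so Poincar\'e and trace rescale), and the Lebesgue-point step is most cleanly justified by noting $f\in L^2_{\mathrm{loc}}(\partial\Omega)$ from the local trace theorem rather than by a Besov embedding. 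The paper instead pulls back to $\mathbb{R}^n_+$ and works with the dyadic horizontal slices $\tilde u_k(x')=\tilde u(2^{-k},x')$: the estimate $\|\tilde u_k-\tilde u_{k+1}\|_{L^2}^2\lesssim 2^{-k}\|\nabla u\|_{L^2(\mathbb{R}^n_+)}^2$ shows the slices converge to $f$ in $L^2_{\mathrm{loc}}$, a Chebyshev/Borel--Cantelli argument (the sets $E_\lambda$) upgrades this to pointwise a.e.\ convergence along the slices, and the a.e.\ finiteness of the truncated square function $S^{1}_{10a}(u)$ controls the oscillation $|\tilde u(y)-\tilde u_k(x')|$ within the cone. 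Your approach buys a proof with no pullback and a quantitative rate, at the price of invoking uniform Poincar\'e--trace estimates on boundary Carleson regions and a covering-lemma maximal estimate; the paper's route uses only Fubini, Chebyshev and the characterization of the trace as the $L^2_{\mathrm{loc}}$ limit of slices, which keeps it self-contained within the square-function machinery already set up.
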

\begin{proof} It suffices to work on $\Omega={\mathbb R}^{n}_+$ since the pull-back transformation \eqref{E:rho} defines a bijection between $\dot{W}^{1,2}(\Omega;\mathbb C)$ and $\dot{W}^{1,2}({\mathbb R}^{n}_+;\mathbb C)$ and maps an interior ball $B_{\delta(x)/2}(x)$ in $\Omega$ into an open set on ${\mathbb R}^{n}_+$ that contains and is contained in balls of radius comparable to $\delta(x)/2$. Hence the result proven on 
${\mathbb R}^{n}_+$ transfers to $\Omega$.

Hence from now on let $\Omega={\mathbb R}^{n}_+$. Writing $x\in{\mathbb R}^{n}_+$ as $(x_0,x')$
consider the functions
$${\tilde u}_k(x')= {\tilde u}(2^{-k},x'),\qquad\forall x'\in{\mathbb R}^{n-1}.$$
Then for any $x,y\in {\mathbb R}^{n}_+$ with $|x-y|\le r$ and $\delta(x),\delta(y)\approx r$ we have
\begin{equation}\label{EQdiff}
|\tilde u(x)-\tilde u(y)|^2\lesssim \int _{\mathcal H}|\nabla u|^2 r^{2-n}\, dy
\end{equation}
where $\mathcal H$ is the convex hull of the set $B_{\delta(x)/2}(x)\cup B_{\delta(y)/2}(y)$.
It follows that
$$\int_{{\mathbb R}^{n-1}}|\tilde u_k(x')-\tilde u_{k+1}(x')|^2\,dx' \lesssim \int_{(2^{-(k-1)},2^{-(k+2)})\times {\mathbb R}^{n-1}}|\nabla u|^2 (2^{-k})\,dy \le 2^{-k}\|\nabla u\|^2_{L^2({\mathbb R}^{n}_+)}.$$
From this we have that $(\tilde u_k)_{k\in\mathbb N}$ is a Cauchy sequence in $L^2_{loc}({\mathbb R}^{n-1})$ and hence convergent. As $f$ is the trace of $u$ it follows that
$\tilde u_k\to f$ in $L^2_{loc}$.

Next we show that $\tilde u_k\to f$ pointwise almost everywhere. For any $\lambda>0$ consider the set
$$E_\lambda=\left\{x'\in{\mathbb R}^{n-1}:\,\forall k\in\mathbb N\text{ we have } |\tilde u_k(x')-\tilde u_{k+1}(x')|^2\le\frac{\lambda}{2^{k/2}}\right\}.$$
We estimate the size of the complement of $E_\lambda$. Clearly,
$$|E_\lambda^c|\le \sum_{k=1}^\infty \left|\left\{x'\in{\mathbb R}^{n-1}:\, |\tilde u_k(x')-\tilde u_{k+1}(x')|^2>\frac{\lambda}{2^{k/2}}\right\}\right|$$
$$\le\sum_{k=1}^\infty\frac{2^{k/2}}{\lambda}\int_{{\mathbb R}^{n-1}} |\tilde u_k(x')-\tilde u_{k+1}(x')|^2\,dx'
\le \sum_{k=1}^\infty\frac{2^{k/2}}{\lambda}2^{-k}\|\nabla u\|^2_{L^2({\mathbb R}^{n}_+)}\le \frac{C}\lambda.$$
It follows that $\cap_{\lambda>0} E_\lambda^c$ is a set of measure zero. Hence the set
$$\mathcal S=\left(\bigcup_{\lambda>0}E_\lambda\right)\cap \{x'\in {\mathbb R}^{n-1}:\, S_{10a}^1(u)(x')<\infty\}$$
has full measure. Here $S_{10a}^1(u)$ is the truncated square function at the height $1$. Clearly, $\{x'\in {\mathbb R}^{n-1}:\, S_{10a}^1(u)(x')<\infty\}$ is a set of full measure due to our assumption that $u\in W^{1,2}(\mathbb R^n_+)$. Indeed,
$$\int_{{\mathbb R}^{n-1}}S_{10a}^1(u)(x')\,dx'\approx \int_{{\mathbb R}^{n-1}\times(0,1)}|\nabla u|^2x_0\,dx_0dx'\le \|\nabla u\|^2_{L^2(\mathbb R^n_+)}<\infty,$$
and hence $S_{10a}^1(u)(x')<\infty$ a.e. as claimed and therefore $\mathcal S$ is a set of full measure.

Consider any $y\in\Gamma_a^1(x')$ for $x'\in\mathcal S$. Find an integer $k\in\mathbb N$ such that $\delta(y)\approx 2^{-k}$ and hence also $|y-(2^{-k},x'))|\approx 2^{-k}$. We estimate the difference $\tilde u(y)-\tilde u_k(x')$. As before we have (c.f. \eqref{EQdiff})
$$|\tilde u(y)-\tilde u_k(x')|^2\lesssim \int _{\mathcal H}|\nabla u(y)|^2 \delta(y)^{2-n}\, dy\lesssim \int_{{\mathcal O}_{k-2}\cup {\mathcal O}_{k-1}\cup {\mathcal O}_{k}\cup {\mathcal O}_{k+1}}|\nabla u(y)|^2 \delta(y)^{2-n}\, dy$$
where $\mathcal H$ is the convex hull of the set $B_{\delta((2^{-k},x'))/2}((2^{-k},x'))\cup B_{\delta(y)/2}(y)$.
Here 
$${\mathcal O}_j=\{(y_0,y')\in\Gamma_{10a}(x'):\, y_0\in (2^{-j},2^{-j+1}]\}.$$
Since
$$[S_{10a}^1(u)(x')]^2 = \sum_{k=1}^{\infty} \int_{{\mathcal O}_k}|\nabla u(y)|^2 \delta(y)^{2-n}\,dy <\infty$$
we see that 
$$\int_{{\mathcal O}_k}|\nabla u(y)|^2 \delta(y)^{2-n}\,dy\to 0,\qquad\text{as }k\to\infty$$
and hence 
\begin{equation}\label{eqsum}
|\tilde u(y)-\tilde u_k(x')|\to 0 \text{ as $k\to\infty$.}
\end{equation}
Consider now the sequence $(\tilde u_k(x'))_{k\in\mathbb N}$. We claim that it is Cauchy and hence convergent. Indeed, since $x'\in\mathcal S$ then $x'\in E_\lambda$ for some $\lambda>0$ and hence
$$\sum_{k=1}^\infty |\tilde u_k(x')-\tilde u_{k+1}(x')|\le\sum_{k=1}^\infty\frac{\sqrt\lambda}{2^{k/4}}<\infty.$$
From this the claim that $(\tilde u_k(x'))_{k\in\mathbb N}$ is Cauchy follows. 
As $\tilde u_k\to f$ in $L^2_{loc}$ we therefore have $\tilde u_k(x')\to f(x')$ pointwise as $k\to\infty$ for all $x'\in\mathcal S$ ($f$ can be modified on a set of measure zero if necessary). Combining this with \eqref{eqsum} we see that
$$\tilde u(y)\to f(x'),\qquad\text{as } y\to x'\text{ and }y\in\Gamma_a(x')$$
for all $x'\in\mathcal S$. This proves the lemma.
\end{proof}

\begin{lemma}\label{Traces2} Let $1<p<\infty$ and assume that the $L^p$ Dirichlet problem for the operator $\mathcal Lu=\mbox{\rm div}( A(x)\nabla u) +B(x)\cdot\nabla u$ is solvable on a domain $\Omega\subset{\mathbb R}^n$. Assume also that $\mathcal L$ is such that the Lax-Milgram lemma applies (implying existence of the energy solutions in $ \dot{W}^{1,2}(\Omega;\mathbb C)$).

For any $f\in L^p(\partial\Omega;\mathbb C)$ consider an approximation of $f$ by functions $f_k\in \dot{B}^{2,2}_{1/2}(\partial\Omega;{\BBC})\cap L^p(\partial\Omega;{\BBC})$ such that
$$f_k\to f\qquad\text{in }L^p(\partial\Omega;\mathbb C).$$

Let $u_k$ be the energy solutions corresponding to data given by $f_k$. Let
$$u=\lim_{k\to\infty} u_k\qquad\mbox{on }\Omega.$$
Then $u\in W^{1,2}_{loc}(\Omega;\mathbb C)$ and $u$ satisfies the estimate
\begin{equation}\label{Main-Est2}
\|\tilde{N}_{2,a} u\|_{L^{p}(\partial\Omega)}\leq C\|f\|_{L^{p}(\partial\Omega)}
\end{equation}
with $C>0$ as in Definition \ref{D:Dirichlet}. The averages
$$\tilde{u}(x)=\dint_{B_{\delta(x)/2}(x)} u(y)\,dy,\quad \forall x\in \Omega$$ 
satisfy
\begin{equation}\label{Convpoint2}
f(Q)=\lim_{y\to Q,\,y\in\Gamma(Q)}\tilde u(y),\qquad\text{for a.e. }Q\in \partial\Omega.
\end{equation}
\end{lemma}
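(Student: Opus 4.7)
The plan is to first upgrade the hypothesized convergence $u_k\to u$ to convergence in $W^{1,2}_{loc}(\Omega;\mathbb{C})$, which automatically places $u$ in that space as a weak solution, then transfer the nontangential maximal function bound to the limit via lower semicontinuity, and finally deduce the a.e.\ nontangential convergence of $\tilde u$ to $f$ by a standard $\limsup$ argument that combines Lemma~\ref{Traces} applied to each $u_k$ with a tail estimate on $u-u_k$ coming from solvability.

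First I would show that $(u_k)$ is Cauchy in $W^{1,2}_{loc}$. Since $u_j-u_k$ is the energy solution with datum $f_j-f_k\in\dot B^{2,2}_{1/2}\cap L^p$, $L^p$-solvability gives
\[
\|\tilde N_{2,a}(u_j-u_k)\|_{L^p(\partial\Omega)}\le C\|f_j-f_k\|_{L^p(\partial\Omega)}\longrightarrow 0.
\]
Cauchy--Schwarz dominates $|\tilde u_j(x)-\tilde u_k(x)|$ by the $L^2$-average of $u_j-u_k$ on $B_{\delta(x)/2}(x)$, and hence by $\tilde N_{2,a}(u_j-u_k)(Q)$ for every $Q$ in the nontangential shadow of $x$ on $\partial\Omega$, a set of surface measure bounded below uniformly on any compact subset of $\Omega$. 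Raising to the $p$-th power and integrating against $\sigma$ on that shadow yields $L^2_{loc}$ convergence of $u_j-u_k$ to $0$; the Caccioppoli inequality (valid under the standing ellipticity) then promotes this to convergence in $W^{1,2}_{loc}(\Omega;\mathbb{C})$. The limit agrees with $u$ a.e., so $u\in W^{1,2}_{loc}(\Omega;\mathbb{C})$ and $\mathcal L u=0$ in the weak sense.

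Next, \eqref{Main-Est2} will follow from lower semicontinuity. Passing to a subsequence so that $u_k\to u$ pointwise a.e.\ on $\Omega$, the averages satisfy $w_k(x)\to w(x)$ at each $x\in\Omega$, whence for each $Q\in\partial\Omega$ and each $x\in\Gamma_a(Q)$,
\[
w(x)=\lim_{k\to\infty}w_k(x)\le\liminf_{k\to\infty}\tilde N_{2,a}(u_k)(Q).
\]
Taking the supremum over $x\in\Gamma_a(Q)$ and applying Fatou's lemma gives
\[
\|\tilde N_{2,a}(u)\|_{L^p(\partial\Omega)}^p\le\liminf_k\|\tilde N_{2,a}(u_k)\|_{L^p}^p\le C^p\|f\|_{L^p(\partial\Omega)}^p.
\]
The identical argument, applied to $u-u_k$ viewed as the $W^{1,2}_{loc}$ limit of the energy solutions $u_j-u_k$ (with data $f_j-f_k\to f-f_k$), supplies the crucial tail bound $\|\tilde N_{2,a}(u-u_k)\|_{L^p}\le C\|f-f_k\|_{L^p}$.

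Finally, to prove \eqref{Convpoint2}, Lemma~\ref{Traces} applied to each energy solution $u_k$ gives $\tilde u_k(y)\to f_k(Q)$ nontangentially for $\sigma$-a.e.\ $Q$. The triangle inequality then yields, for each $k$ and a.e.\ $Q$,
\[
\limsup_{\substack{y\to Q\\ y\in\Gamma_a(Q)}}|\tilde u(y)-f(Q)|\le\tilde N_{2,a}(u-u_k)(Q)+|f_k(Q)-f(Q)|,
\]
after absorbing the (zero) nontangential limit of $\tilde u_k(y)-f_k(Q)$. Chebyshev's inequality combined with the tail bound gives, for every $\lambda>0$,
\[
\sigma\Bigl\{Q:\;\limsup_{y\to Q,\,y\in\Gamma_a(Q)}|\tilde u(y)-f(Q)|>\lambda\Bigr\}\lesssim\lambda^{-p}\|f-f_k\|_{L^p}^p\xrightarrow[k\to\infty]{}0,
\]
so the $\limsup$ vanishes $\sigma$-a.e., which is \eqref{Convpoint2}. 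The main obstacle I anticipate is in the middle paragraph: giving meaning to, and a solvability-type bound on, $\tilde N_{2,a}(u-u_k)$ when $f-f_k$ need not lie in $\dot B^{2,2}_{1/2}$ and so does not directly come from the energy-solution theory. This is handled by realizing $u-u_k$ as a $W^{1,2}_{loc}$ limit of bona fide energy solutions $u_j-u_k$ and reusing the lower-semicontinuity step that established \eqref{Main-Est2}.
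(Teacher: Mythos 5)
Your proposal is correct and follows essentially the same route as the paper, which omits the details and only sketches exactly this argument: repeated use of the solvability estimate $\|\tilde{N}_{2,a}(u_j-u_k)\|_{L^p}\le C\|f_j-f_k\|_{L^p}$ for differences of energy solutions, Lemma \ref{Traces} applied to each $u_k$, $f_k$, and the standard real-coefficient limsup/Chebyshev argument to pass to the limit. Your write-up merely fills in the routine steps (local $L^2$ convergence via shadows, Caccioppoli, Fatou/lower semicontinuity for the tail bound $\|\tilde N_{2,a}(u-u_k)\|_{L^p}\le C\|f-f_k\|_{L^p}$), all consistent with the paper's intended proof.
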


We omit the proof of the lemma as it uses the same argument as in the case of real coefficients , repeatedly using the estimate that follows from solvability:
$$\|\tilde{N}_{2,a} (u_k-u_l)\|_{L^{p}(\partial\Omega)}\leq C\|f_k-f_l\|_{L^{p}(\partial\Omega)}.$$  In the real case the approximating functions are chosen so that they are continuous, which under mild assumptions on the regularity of $\partial\Omega$ (such as NTA) then immediately implies 
\begin{equation}\label{Convpoint3}
f_k(Q)=\lim_{y\to Q,\,y\in\Gamma(Q)}\tilde u_k(y),\qquad\text{for all }Q\in \partial\Omega.
\end{equation}
In our case (of complex coefficients) \eqref{Convpoint3} is replaced by \eqref{Convpoint} for each $u_k$ and $f_k$  (the a.e. convergence is sufficient for the argument). The rest of the proof goes as in the real case giving us  \eqref{Convpoint2} for $\tilde u$ and $f$.
\begin{bibdiv}
\begin{biblist}

\bib{AAAHK}{article}{
   author={Alfonseca, M},
   author={Auscher, P.},
   author={Axelsson, A},
   author={Hofmann, S.},
   author={Kim, S.},
   title={Analyticity of layer potentials and $L^2$ solvability of boundary value problems for divergence form elliptic equations with complex $L^\infty$ coefficients.},
   journal={Adv. Math},
   volume={226},
   date={2011},
   number={5},
   pages={4533--4606},
}

\bib{AAH}{article}{
   author={Auscher, P.},
   author={Axelsson, A},
   author={Hofmann, S.},
   title={Functional calculus of Dirac operators and complex perturbations of Neumann and Dirichlet problems},
   journal={J. Func. Anal},
   volume={255},
   date={2008},
   number={2},
   pages={374--448},
}
			
 \bib{AAM}{article}{
   author={Auscher, P.},
   author={Axelsson, A.},
   author={McIntosh, A.},
   title={Solvability of elliptic systems with square integrable boundary
   data},
   journal={Ark. Mat.},
   volume={48},
   date={2010},
   number={2},
   pages={253--287},
}

\bib{ABBO}{article}{
   author={Auscher, P.},
   author={Bath\'elemy, L.},
   author={ B\'enilan, P.}
   author={Ouhabaz, E.},
   title={Absence de la $L^\infty$-contractivit\'e pour les semi-groupes associ\'es auz op\'erateurs elliptiques complexes sous forme divergence},
   journal={Poten. Anal.},
   volume={12},
   date={2000},
   pages={169--189},
}

\bib{ADN}{article}{
   author={Agmon, S.},
   author={Douglis, A.},
     author={Nirenberg, L.},
   title={Estimates near the boundary for solutions of elliptic partial differential equations satisfying general boundary conditions II},
   journal={Comm. Pure and Appl. Math.},
   volume={17}, 
   date={1964},
   pages={35-92},
   }

\bib{AHLMT}{article}{
   author={Auscher, P.},
   author={Hofmann, S.},
   author={Lacey, M.},
   author={McIntosh, A.},
   author={Tchamitchian, P.},
   title={The solution of the Kato square root problem for second order elliptic operators on ${\mathbb R}^n$},
   journal={Ann. Mat.},
   volume={156},
   date={2001},
   number={2},
   pages={633--654},
}

 \bib{CD}{article}{
   author={Carbonaro, A.},
   author={Dragi\v{c}evi\'c, O.},
   title={Convexity of power functions and bilinear embedding for divergence-form operators with complex coefficients},
   journal={arXiv:1611.00653},
}

 \bib{CM2}{article}{
   author={Cialdea, A.},
   author={Maz'ya, V.},
   title={Criterion for the $L^p$-dissipativity of second order differential operators with complex coefficients},
   journal={ J. Math. Pures Appl.},
   volume={84},
   date={2005},
   number={9},
   pages={1067--1100},
}

\bib{CM1}{article}{
   author={Cialdea, A.},
   author={Maz'ya, V.},
   title={Criteria for the $L^p$-dissipativity of systems of second order differential equations},
   journal={Ricc. Mat.},
   volume={55},
   date={2006},
   number={2},
   pages={233--265},
}

\bib{CM3}{article}{
   author={Cialdea, A.},
   author={Maz'ya, V.},
   title={$L^p$-dissipativity of the Lam\'e operator.},
   journal={Mem. Differ. Equ. Math. Phys.},
   volume={60},
   date={2013},
   pages={111--133},
}

\bib{CMS}{article}{
   author={Coifman, R.},
   author={Meyer, Y.},
     author={Stein, E.},
   title={Some new function spaces and their applications to harmonic analysis},
   journal={JFA},
   volume={62}, 
   date={1985},
   pages={304-335},
}

\bib{DK}{article}{
   author={Dahlberg, B.},
   author={Kenig, C.},
   title={Hardy spaces and the Neumann problem in $L^p$ for Laplace's equation in Lipschitz domains},
   journal={Annals of Math.},
   volume={125}, 
   date={1987},
   number ={3}, 
   pages={437-465},
}

\bib{DKV}{article}{
   author={Dahlberg, B.},
   author={Kenig, C.},
   author={Verchota, G.},
   title={The Dirichlet problem for the biharmonic equation in a Lipschitz domains},
   journal={Annales de l'institut Fourier},
   volume={36}, 
   date={1986},
   number ={3}, 
   pages={109-135},
}

\bib{DFM}{article}{
   author={David, G.},
   author={Feneuil, J.},
   author={Mayboroda,S.},
   title={Harmonic measure on sets of codimension larger than one.},
   journal={preprint, https://arxiv.org/abs/1608.01395},
}

\bib{DFM2}{article}{
   author={David, G.},
   author={Feneuil, J.},
   author={Mayboroda,S.},
   title={Dahlberg's theorem in higher codimension.},
   journal={preprint, https://arxiv.org/abs/1704.00667v1}
   }
		
 \bib{DH}{article}{
   author={Dindo{\v{s}}, Martin},
   author={Sukjung Hwang},
   title={The Dirichlet boundary problem for second order parabolic operators satisfying Carleson condition},
   journal={Rev. Math. Iber.},
      volume={34},
   number={2},
   date={2018},
   pages={767--810},
}

\bib{DHM}{article}{
   author={Dindo{\v{s}}, M.},
   author={Sukjung, H.},
   author={Mitrea, M.},
   title={The $L^p$ Dirichlet boundary problem for second order Elliptic Systems with rough coefficients},
   journal={arXiv:1708.02289},
}

   \bib{DPP}{article}{
   author={Dindo\v{s}, M.},
   author={Petermichl, S.},
   author={Pipher, J.},
   title={The $L^p$ Dirichlet problem for second order elliptic operators
   and a $p$-adapted square function},
   journal={J. Funct. Anal.},
   volume={249},
   date={2007},
   number={2},
   pages={372--392},
    }

  \bib{DPR}{article}{
   author={Dindo\v{s}, Martin},
   author={Pipher, Jill},
   author={Rule, David},
   title={The boundary value problems for second order elliptic operators satisfying a Carleson condition},
   journal={Com. Pure Appl. MAth.},
   volume={70},
   number={2},
   date={2017},
   pages={1316--1365},
    }

    \bib{Lan}{article}{
   author={Langer, M.},
   title={$L^p$-contractivity of semigroups generated by parabolic matrix differential operators},
   journal={The Maz\'ya Anniversary Collection, On Maz\'ya's work in functional analysis, partial differential equations and applications, Birkh\"auser},
   volume={1},
   date={1999},
   number={3},
   pages={307--330},
}
	
 \bib{FSt}{article}{
   author={Fefferman, C.},
   author={Stein, E.},
   title={$H^p$ spaces of several variables},
   journal={Acta Mat.},
   volume={129},
   date={1972},
   pages={137--193},
 }

\bib{HKMPreg}{article}{
   author={Hofmann, S.},
   author={Kenig, C.},
   author={Mayboroda, S.},
   author={Pipher, J.},
   title={The regularity problem for second order elliptic operators with complex-valued bounded measurable coefficients},
   journal={Math. Ann.},
   volume={361},
   date={2015},
   issue={3--4},
   pages={863--907},
}

\bib{HM}{article}{
 author={Hofmann, S.},
   author={Martell, J.},
   title={$L^p$ bounds for Riesz transforms and square roots associated to second order elliptic operators},
   journal={Pub. Mat.},
   volume={47},
   date={2003},
   pages={497--515},
}

\bib{HMTo}{article}{
 author={Hofmann, S.},
   author={Martell, J.},
   author={Toro, T.}
   title={$A_\infty$ implies NTA for a class of variable coefficient elliptic
  operators  },
   journal={preprint, https://arxiv.org/abs/1611.09561 },
}

   \bib{KKPT}{article}{
   author={Kenig, C.},
   author={Koch, H.},
   author={Pipher, J.},
   author={Toro, T.},
   title={A new approach to absolute continuity of elliptic measure, with
   applications to non-symmetric equations},
   journal={Adv. Math.},
   volume={153},
   date={2000},
   number={2},
   pages={231--298},
    }

    \bib{KP2}{article}{
   author={Kenig, C.},
   author={Pipher, J.},
   title={The Neumann problem for elliptic equations with nonsmooth
   coefficients},
   journal={Invent. Math.},
   volume={113},
   date={1993},
   number={3},
   pages={447--509},
}

   \bib{KP01}{article}{
   author={Kenig, C.},
   author={Pipher, J.},
   title={The Dirichlet problem for elliptic equations with drift terms},
   journal={Publ. Math.},
   volume={45},
   date={2001},
   number={1},
   pages={199--217},
  }

   \bib{May}{article}{
   author={Mayboroda, S.},
   title={The connections between Dirichlet, regularity and Neumann problems for second order elliptic operators with complex bounded measurable coefficients},
   journal={Adv. Math.},
   volume={225},
   date={2010},
   number={4},
   pages={1786--1819},
  }

  \bib{Tay}{book}{
   author={Taylor, E.},
   title={Partial Differential Equations I: Basic Theory},
   series={Springer},
   date={2010},
}

   \end{biblist}
\end{bibdiv}

\end{document}